\newtheorem{Theorem}{Theorem}[section]
\newtheorem{Corollary}[Theorem]{Corollary}
\newtheorem{Lemma}[Theorem]{Lemma}
\newtheorem{Proposition}[Theorem]{Proposition}
\newtheorem{Claim}[Theorem]{Claim}
 { \theoremstyle{definition}
\newtheorem{Definition}[Theorem]{Definition}
\newtheorem{Example}[Theorem]{Example}
\newtheorem{Remark}[Theorem]{Remark}
\newtheorem{question}[Theorem]{Question} }
\numberwithin{equation}{section}
\newcommand{\calX}{\mathcal{X}}
\newcommand{\calQ}{\mathcal{Q}}
\newcommand{\calT}{\mathcal{T}}
\newcommand\arcstr{\ar@/^1pc/}
\newcommand{\arinj}{\ar@{_{(}->}}
\let\sumnonlimits\sum
\let\prodnonlimits\prod
\renewcommand{\sum}{\sumnonlimits\limits}
\renewcommand{\prod}{\prodnonlimits\limits}
\begin{document}
\allowdisplaybreaks

\newcommand{\arXivNumber}{1704.00839}

\renewcommand{\PaperNumber}{078}

\FirstPageHeading

\ShortArticleName{$t$-Unique Reductions for M\'{e}sz\'aros's Subdivision Algebra}

\ArticleName{$\boldsymbol{t}$-Unique Reductions\\ for M\'{e}sz\'aros's Subdivision Algebra}

\Author{Darij GRINBERG}

\AuthorNameForHeading{D.~Grinberg}

\Address{School of Mathematics, University of Minnesota,\\
206 Church St. SE, Minneapolis, MN 55455, USA}
\Email{\href{mailto:darijgrinberg@gmail.com}{darijgrinberg@gmail.com}}
\URLaddress{\url{http://www.cip.ifi.lmu.de/~grinberg/}}

\ArticleDates{Received November 22, 2017, in final form July 15, 2018; Published online July 26, 2018}

\Abstract{Fix a commutative ring $\mathbf{k}$, two elements $\beta \in\mathbf{k}$ and $\alpha\in\mathbf{k}$ and a positive integer $n$. Let $\mathcal{X}$ be the polynomial ring over $\mathbf{k}$ in the $n(n-1) /2$ indeterminates $x_{i,j}$ for all $1\leq i<j\leq n$. Consider the ideal $\mathcal{J}$ of $\mathcal{X}$ generated by all polynomials of the form $x_{i,j}x_{j,k}-x_{i,k} ( x_{i,j}+x_{j,k}+\beta ) -\alpha$ for $1\leq i<j<k\leq n$. The quotient algebra $\mathcal{X}/\mathcal{J}$ (at least for a certain choice of $\mathbf{k}$, $\beta$ and $\alpha$) has been introduced by Karola M\'{e}sz\'{a}ros in [\textit{Trans. Amer. Math. Soc.} \textbf{363} (2011), 4359--4382] as a~commutative analogue of Anatol Kirillov's quasi-classical Yang--Baxter algebra. A monomial in $\mathcal{X}$ is said to be \textit{pathless} if it has no divisors of the form~$x_{i,j}x_{j,k}$ with $1\leq i<j<k\leq n$. The residue classes of these pathless monomials span the $\mathbf{k}$-module~$\mathcal{X}/\mathcal{J}$, but (in general) are $\mathbf{k}$-linearly dependent. More combinatorially: reducing a~given $p\in\mathcal{X}$ modulo the ideal~$\mathcal{J}$ by applying replacements of the form $x_{i,j}x_{j,k}\mapsto x_{i,k} ( x_{i,j}+x_{j,k}+\beta ) +\alpha$ always eventually leads to a $\mathbf{k}$-linear combination of pathless monomials, but the result may depend on the choices made in the process. More recently, the study of Grothendieck polynomials has led Laura Escobar and Karola M\'{e}sz\'{a}ros [\textit{Algebraic Combin.} \textbf{1} (2018), 395--414] to defining a $\mathbf{k}$-algebra homomorphism $D$ from $\mathcal{X}$ into the polynomial ring $\mathbf{k} [ t_{1},t_{2},\ldots,t_{n-1} ] $ that sends each $x_{i,j}$ to $t_{i}$. We show the following fact (generalizing a conjecture of M\'{e}sz\'{a}ros): If $p\in\mathcal{X}$, and if $q\in\mathcal{X}$ is a~$\mathbf{k}$-linear combination of pathless monomials satisfying $p\equiv q\operatorname{mod}\mathcal{J}$, then $D(q) $ does not depend on~$q$ (as long as~$\beta$,~$\alpha$ and~$p$ are fixed). Thus, the above way of reducing a $p\in\mathcal{X}$ modulo $\mathcal{J}$ may lead to different results, but all of them become identical once~$D$ is applied. We also find an actual basis of the $\mathbf{k}$-module $\mathcal{X}/\mathcal{J}$, using what we call \textit{forkless monomials}.}

\Keywords{subdivision algebra; Yang--Baxter relations; Gr\"obner bases; Arnold relations; Orlik--Terao algebras; noncommutative algebra}

\Classification{05E15; 05E40}

\tableofcontents

\section{Introduction}

The main result of this paper is probably best illustrated by an example:

\begin{Example}
\label{exam.intro}Let us play a solitaire game. Fix a positive integer $n$ and two numbers $\beta\in\mathbb{Q}$ and $\alpha\in\mathbb{Q}$, and let $\mathcal{X}$ be the ring $\mathbb{Q} [ x_{i,j}\,|\, 1\leq i<j\leq n ] $ of polynomials with rational coefficients in the $n(n-1) /2$ indeterminates $x_{i,j}$ with $1\leq i<j\leq n$. (For example, if $n=4$, then $\mathcal{X}=\mathbb{Q} [ x_{1,2},x_{1,3},x_{1,4},x_{2,3},x_{2,4},x_{3,4}] $.)

Start with any polynomial $p\in\mathcal{X}$. The allowed move is the following: Pick a monomial $\mathfrak{m}$ that appears (with nonzero coefficient) in $p$ and that is divisible by $x_{i,j}x_{j,k}$ for some $1\leq i<j<k\leq n$. For example, $x_{1,2}x_{1,3}x_{2,4}$ is such a monomial (if it appears in $p$ and if $n\geq4$), because it is divisible by $x_{i,j}x_{j,k}$ for $( i,j,k) =( 1,2,4) $. Choose one triple $(i,j,k) $ with $1\leq i<j<k\leq n$ and $x_{i,j}x_{j,k} \,|\, \mathfrak{m}$ (sometimes, there are several choices). Now, replace this monomial~$\mathfrak{m}$ by $\frac{x_{i,k}( x_{i,j}+x_{j,k}+\beta) +\alpha}{x_{i,j}x_{j,k}}\mathfrak{m}$ in~$p$.

Thus, each move modifies the polynomial, replacing a monomial by a sum of four monomials (or fewer, if $\beta$ or $\alpha$ is $0$). The game ends when no more moves are possible (i.e., no mono\-mial~$\mathfrak{m}$ appearing in your polynomial is divisible by $x_{i,j}x_{j,k}$ for any $1\leq i<j<k\leq n$).

It is easy to see that this game (a thinly veiled reduction procedure modulo an ideal of $\mathcal{X}$) always ends after finitely many moves. Here is one instance of this game being played, for $n=4$ and $\beta=1$ and $\alpha=0$ and starting with the polynomial $p=x_{1,2}x_{2,3}x_{3,4}$:
\begin{gather}
 x_{1,2}x_{2,3}x_{3,4} \mapsto x_{1,3}\left( x_{1,2}+x_{2,3}+1\right) x_{3,4}\nonumber\\
 \qquad\qquad ( \text{here, we chose }\mathfrak{m} =x_{1,2}x_{2,3}x_{3,4}\text{ and }(i,j,k) =( 1,2,3)) \nonumber\\
 \qquad{} =x_{1,2}x_{1,3}x_{3,4}+x_{1,3}x_{2,3}x_{3,4}+x_{1,3}x_{3,4}\nonumber\\
 \quad{} \mapsto x_{1,2}x_{1,4}( x_{1,3}+x_{3,4}+1) +x_{1,3}x_{2,3}x_{3,4}+x_{1,3}x_{3,4}\nonumber\\
\qquad\qquad ( \text{here, we chose }\mathfrak{m}=x_{1,2}x_{1,3}x_{3,4}\text{ and }(i,j,k) =( 1,3,4)) \nonumber\\
 \qquad{} =x_{1,2}x_{1,3}x_{1,4}+x_{1,2}x_{1,4}x_{3,4}+x_{1,2}x_{1,4}+x_{1,3}x_{2,3}x_{3,4}+x_{1,3}x_{3,4}\nonumber\\
 \quad{} \mapsto x_{1,2}x_{1,3}x_{1,4}+x_{1,2}x_{1,4}x_{3,4}+x_{1,2}x_{1,4}+x_{1,3}x_{2,4}( x_{2,3}+x_{3,4}+1) +x_{1,3}x_{3,4}\nonumber\\
\qquad\qquad (\text{here, we chose }\mathfrak{m}=x_{1,3}x_{2,3}x_{3,4}\text{ and }(i,j,k) =( 2,3,4)) \nonumber\\
 \qquad{} =x_{1,2}x_{1,3}x_{1,4}+x_{1,2}x_{1,4}x_{3,4}+x_{1,2}x_{1,4}+x_{1,3} x_{2,3}x_{2,4}+x_{1,3}x_{2,4}x_{3,4}\nonumber\\
 \qquad\quad{} +x_{1,3}x_{2,4} +x_{1,3}x_{3,4}\nonumber\\
 \quad{} \mapsto x_{1,2}x_{1,3}x_{1,4}+x_{1,2}x_{1,4}x_{3,4}+x_{1,2}x_{1,4} +x_{1,3}x_{2,3}x_{2,4}+x_{1,3}x_{2,4}x_{3,4}+x_{1,3}x_{2,4}\nonumber\\
\qquad\quad{} +x_{1,4} ( x_{1,3}+x_{3,4}+1 ) \nonumber\\
\qquad\qquad ( \text{here, we chose }\mathfrak{m}=x_{1,3}x_{3,4}\text{ and }(i,j,k) =( 1,3,4)) \nonumber\\
 \qquad{} =x_{1,2}x_{1,3}x_{1,4}+x_{1,2}x_{1,4}x_{3,4}+x_{1,2}x_{1,4}+x_{1,3}x_{2,3}x_{2,4}+x_{1,3}x_{2,4}x_{3,4}\nonumber\\
\qquad\quad{} +x_{1,3}x_{2,4}+x_{1,3}x_{1,4}+x_{1,4}x_{3,4}+x_{1,4}\nonumber\\
 \quad{} \mapsto x_{1,2}x_{1,3}x_{1,4}+x_{1,2}x_{1,4}x_{3,4}+x_{1,2}x_{1,4}+x_{1,3}x_{2,3}x_{2,4}+x_{2,4}x_{1,4} ( x_{1,3}+x_{3,4}+1 )\nonumber\\
\qquad\quad{} +x_{1,3}x_{2,4}+x_{1,3}x_{1,4}+x_{1,4}x_{3,4}+x_{1,4}\nonumber\\
\qquad\qquad ( \text{here, we chose }\mathfrak{m}=x_{1,3}x_{2,4}x_{3,4}\text{ and }(i,j,k) =( 1,3,4)) \nonumber\\
 \qquad{} =x_{1,2}x_{1,3}x_{1,4}+x_{1,2}x_{1,4}x_{3,4}+x_{1,2}x_{1,4}+x_{1,3}x_{2,3}x_{2,4}+x_{1,3}x_{1,4}x_{2,4}\nonumber\\
\qquad\quad{}+x_{1,4}x_{2,4}x_{3,4}+x_{1,4}x_{2,4}+x_{1,3}x_{2,4}+x_{1,3}x_{1,4}+x_{1,4}x_{3,4}+x_{1,4}. \label{eq.exam.intro.res1}
\end{gather}
The game ends at this polynomial, since there are no more moves to be done.

A standard question about games like this is: Is the state obtained at the end of the game (i.e., in our case, the polynomial after the game has ended) independent of the choices made during the game? In our case, the answer is ``no'' (in general, for $n\geq4$). Indeed, the reader can easily verify that the above game could have led to a different result if we had made different choices.

However, something else turns out to be independent of the choices. Namely, let us transform the polynomial at the end of the game further by applying the substitution $x_{i,j}\mapsto t_{i}$ (where $t_{1},t_{2},\ldots,t_{n-1}$ are new indeterminates). For example, doing this to the polynomial~(\ref{eq.exam.intro.res1}) results in%
\begin{gather*}
 t_{1}t_{1}t_{1}+t_{1}t_{1}t_{3}+t_{1}t_{1}+t_{1}t_{2}t_{2}+t_{1}t_{1} t_{2}+t_{1}t_{2}t_{3}+t_{1}t_{2}+t_{1}t_{2}+t_{1}t_{1}+t_{1}t_{3}+t_{1}\\
\qquad{} =t_{1}\big( 2t_{1}+2t_{2}+t_{3}+t_{1}^{2}+t_{2}^{2}+t_{1}t_{2}+t_{1}t_{3}+t_{2}t_{3}+1\big) .
\end{gather*}
According to a conjecture of M\'{e}sz\'{a}ros, the result of this substitution is indeed independent of the choices made during the game (as long as $p$ is fixed).
\end{Example}

Why would one play a game like this? The reduction rule $\mathfrak{m}\mapsto\frac{x_{i,k} ( x_{i,j}+x_{j,k}+\beta ) }{x_{i,j}x_{j,k}}\mathfrak{m}$ (this is a~particular case of our above rule, when $\alpha$ is set to $0$) has appeared in Karola M\'{e}sz\'{a}ros's study~\cite{Meszar09} of the abelianization of Anatol Kirillov's quasi-classical Yang--Baxter algebra (see, e.g.,~\cite{Kirill16} for a recent survey of the latter and its many variants); it has a long prehistory (some of which is surveyed in Section~\ref{subsect.arnold} below), starting with Vladimir Arnold's 1971 work~\cite{Arnold71} on the braid arrangement. To define this abelianization\footnote{The notations used in this Introduction are meant to be provisional. In the rest of this paper, we shall work with different notations (and in a more general setting), which will be introduced in Section~\ref{sect.states}.}, we let $\beta$ be an indeterminate (unlike in Example~\ref{exam.intro}, where it was an element of~$\mathbb{Q}$). Furthermore, fix a~positive integer $n$. The abelianization of the ($n$-th) quasi-classical Yang--Baxter algebra is the commutative $\mathbb{Q}[\beta] $-algebra~$\mathcal{S}(A_{n}) $ with
\begin{gather*}
\text{generators}\qquad x_{i,j}\quad \text{for all} \quad 1\leq i<j\leq n\qquad\text{and}\\
\text{relations}\qquad x_{i,j}x_{j,k}=x_{i,k} (x_{i,j}+x_{j,k}+\beta) \quad \text{for all} \quad 1\leq i<j<k\leq n.
\end{gather*}
A natural question is to find an explicit basis of $\mathcal{S} (A_{n}) $ (as a $\mathbb{Q}$-vector space, or, if possible, as a~$\mathbb{Q}[\beta] $-module). One might try constructing such a~basis using a reduction algorithm (or ``straightening law'') that takes any element of $\mathcal{S} ( A_{n}) $ (written as any polynomial in the generators $x_{i,j}$) and rewrites it in a~``normal form''. The most obvious way one could try to construct such a reduction algorithm is by repeatedly rewriting products of the form $x_{i,j}x_{j,k}$ (with $1\leq i<j<k\leq n$) as $x_{i,k} ( x_{i,j}+x_{j,k}+\beta ) $, until this is no longer possible. This is precisely the game that we played in Example~\ref{exam.intro} (with the only difference that $\beta$ is now an indeterminate, not a number). Unfortunately, the result of the game turns out to depend on the choices made while playing it; consequently, the ``normal form'' it constructs is not literally a normal form, and instead of a basis of $\mathcal{S} ( A_{n}) $ we only obtain a spanning set.\footnote{Surprisingly, a similar reduction algorithm \textit{does} work for the (non-abelianized) quasi-classical Yang--Baxter algebra itself. This is one of M\'{e}sz\'{a}ros's results \cite[Theorem 30]{Meszar09}.}

Nevertheless, the result of the game is not meaningless. The idea to substitute $t_{i}$ for $x_{i,j}$ (in the result, not in the original polynomial!) seems to have appeared in work of Postnikov, Stanley and M\'{e}sz\'{a}ros; some concrete formulas (for specific values of the initial polynomial and specific values of $\beta$) appear in \cite[Exercise~A22]{Stanle15} (resulting in Catalan and Narayana numbers). Recent work on Grothendieck polynomials by Anatol Kirillov (see \cite[Section~4]{Kirill13} and \cite{Kirill16}) and by Laura Escobar and Karola M\'{e}sz\'{a}ros \cite[Section~5]{EscMes15} has again brought up the notion of substituting $t_{i}$ for $x_{i,j}$ in the polynomial obtained at the end of the game. This has led M\'{e}sz\'{a}ros to the conjecture that, after this substitution, the resulting polynomial no longer depends on the choices made during the game. She has proven this conjecture for a certain class of polynomials (those corresponding to ``noncrossing trees'').

The main purpose of this paper is to establish M\'{e}sz\'{a}ros's conjecture in the general case. We shall, in fact, work in greater generality than all previously published sources. First, instead of the relation $x_{i,j}x_{j,k}=x_{i,k} ( x_{i,j}+x_{j,k}+\beta ) $, we shall consider the ``deformed'' relation $x_{i,j}x_{j,k}=x_{i,k} ( x_{i,j}+x_{j,k}+\beta ) +\alpha$; the idea of this deformation again goes back to the work of Anatol Kirillov (see, e.g., \cite[Definition~5.1(1)]{Kirill16} for a noncommutative variant of the quotient ring~$\mathcal{X}/\mathcal{J}$, which he calls the ``associative quasi-classical Yang--Baxter algebra of weight $(\alpha,\beta)$''). Instead of requiring $\beta$ to be either a rational number (as in Example~\ref{exam.intro}) or an indeterminate over~$\mathbb{Q}$ (as in the definition of $\mathcal{S}(A_{n})$), we shall let $\beta$ be any element of the ground ring, which in turn will be an arbitrary commutative ring~$\mathbf{k}$. Rather than working in an algebra like $\mathcal{S}(A_{n}) $, we shall work in the polynomial ring $\mathcal{X}=\mathbf{k}[ x_{i,j}\,|\, 1\leq i<j\leq n] $, and study the ideal $\mathcal{J}$ generated by all elements of the form $x_{i,j}x_{j,k}-x_{i,k} ( x_{i,j}+x_{j,k}+\beta ) -\alpha$ for $1\leq i<j<k\leq n$. Instead of focussing on the reduction algorithm, we shall generally study polynomials in $\mathcal{X}$ that are congruent to each other modulo the ideal~$\mathcal{J}$. A monomial in $\mathcal{X}$ will be called ``pathless'' if it is not divisible by
any monomial of the form $x_{i,j}x_{j,k}$ with $i<j<k$. A polynomial in $\mathcal{X}$ will be called ``pathless'' if all monomials appearing in it are pathless. Thus, ``pathless'' polynomials are precisely the polynomials
$p\in\mathcal{X}$ for which the game in Example~\ref{exam.intro} would end immediately if started at~$p$.

Our main result (Theorem~\ref{thm.t-red.unique}) will show that if $p\in\mathcal{X}$ is a polynomial, and if $q\in\mathcal{X}$ is a pathless polynomial congruent to $p$ modulo $\mathcal{J}$, then the image of $q$ under
the substitution $x_{i,j}\mapsto t_{i}$ does not depend on $q$ (but only on $\alpha$, $\beta$ and $p$). This, in particular, yields M\'{e}sz\'{a}ros's conjecture; but it is a stronger result, because it does not require that~$q$ is obtained from $p$ by playing the game from Example~\ref{exam.intro} (all we ask for is that $q$ be pathless and congruent to~$p$ modulo~$\mathcal{J}$), and of course because of the more general setting.

After the proof of Theorem~\ref{thm.t-red.unique}, we shall rewrite the definition of $\mathcal{J}$ (and of $\mathcal{X}$) in a more symmetric form (Section~\ref{sect.symmetry}). Then, we shall also answer the (easier)
question of finding a~basis for the quotient ring $\mathcal{X}/\mathcal{J}$ (Proposition~\ref{prop.forkless.basis}). This basis will be obtained using an explicit Gr\"{o}bner basis of the ideal $\mathcal{J}$.

We shall close with further considerations, open questions and connections to previous research.

A recent preprint by M\'{e}sz\'{a}ros and St.~Dizier \cite{MesDiz17} proves a fact \cite[Theorem~A]{MesDiz17} which, translated into our language, confirms the conjecture stated in Example~\ref{exam.intro} at least in the case when $\alpha=0$ and the game is started with a monomial~$p$. This might provide a~different route to some of our results. (The arguments in~\cite{MesDiz17} are of combinatorial nature, involving flows on graphs, and so is the language used in~\cite{MesDiz17}; in particular, monomials are encoded by graphs.)

\subsection{Remark on alternative versions}

This paper also has a detailed version~\cite{verlong}, which includes some proofs that have been omitted from the present version (mostly straightforward computations and basic properties of Gr\"{o}bner bases).

In a previous version (\href{https://arxiv.org/abs/1704.00839v2}{arXiv:1704.00839v2}) of this paper, a weaker version of the main result was proven (which corresponds to the case $\alpha=0$ in our notations). The proof used a somewhat different construction (involving formal power series instead of Laurent series, and a different map~$A$).

\section{Definitions and results}\label{sect.states}

Let us now start from scratch, and set the stage for the main result.

\begin{Definition}Let $\mathbb{N}=\{ 0,1,2,\ldots\} $. Let $[m] $ be the set $\{ 1,2,\ldots,m\} $ for each $m\in\mathbb{N}$. Let $\mathbf{k}$ be a commutative ring. (We fix $\mathbf{k}$ throughout this paper.) Fix two elements $\beta$ and $\alpha$ of~$\mathbf{k}$.

The word ``monomial'' shall always mean an element of a free abelian monoid (written multiplicatively). For example, the monomials in two indeterminates $x$ and $y$ are the elements of the form $x^{i}y^{j}$ with $(i,j) \in\mathbb{N}^{2}$. Thus, monomials do not include coefficients (and are not bound to a specific base ring).
\end{Definition}

\begin{Definition}Fix a positive integer $n$. Let $\mathcal{X}$ be the polynomial ring
\begin{gather*}
\mathbf{k}\big[ x_{i,j}\,|\, (i,j) \in[n] ^{2}\text{ satisfying }i<j\big] .
\end{gather*}
This is a polynomial ring in $n(n-1) /2$ indeterminates $x_{i,j}$ over $\mathbf{k}$.

We shall use the notation $\mathfrak{M}$ for the set of all monomials in these indeterminates $x_{i,j}$. Notice that $\mathfrak{M}$ is an abelian monoid under multiplication.
\end{Definition}

\begin{Definition} A monomial $\mathfrak{m}\in\mathfrak{M}$ is said to be \textit{pathless} if there exists no triple $(i,j,k) \in[n] ^{3}$ satisfying $i<j<k$ and $x_{i,j}x_{j,k}\,|\,\mathfrak{m}$ (as monomials).

A polynomial $p\in\mathcal{X}$ is said to be \textit{pathless} if it is a~$\mathbf{k}$-linear combination of pathless monomials.
\end{Definition}

\begin{Definition} Let $\mathcal{J}$ be the ideal of $\mathcal{X}$ generated by all elements of the form $x_{i,j}x_{j,k}-x_{i,k} ( x_{i,j}+x_{j,k}+\beta ) -\alpha$ for $(i,j,k) \in[n] ^{3}$ satisfying $i<j<k$.
\end{Definition}

The following fact is easy to check:

\begin{Proposition}\label{prop.path-red.span}Let $p\in\mathcal{X}$. Then, there exists a pathless polynomial $q\in\mathcal{X}$ such that $p\equiv q$ $\operatorname{mod} \mathcal{J}$.
\end{Proposition}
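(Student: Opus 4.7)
My plan is to reduce to the single-monomial case and then run a weight-decreasing induction. Since $\mathcal{X}$ is spanned as a $\mathbf{k}$-module by monomials and $\mathcal{J}$ is an ideal, it is enough to show: every monomial $\mathfrak{m} \in \mathfrak{M}$ is congruent modulo $\mathcal{J}$ to some pathless polynomial. Given this, writing any $p \in \mathcal{X}$ as a $\mathbf{k}$-linear combination of monomials and summing the pathless representatives yields the desired $q$.

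To obtain the monomial case, I would introduce a weight homomorphism $w\colon \mathfrak{M} \to \mathbb{N}$ (from the free abelian monoid $\mathfrak{M}$ to $(\mathbb{N},+)$) defined on generators by $w(x_{i,j}) := n - (j-i)$, which is a positive integer since $1 \leq j-i \leq n-1$. The decisive property of this weight is that for all $1 \leq i < j < k \leq n$, every monomial occurring in
\begin{gather*}
x_{i,k}(x_{i,j} + x_{j,k} + \beta) + \alpha \ =\ x_{i,k}x_{i,j} + x_{i,k}x_{j,k} + \beta\, x_{i,k} + \alpha
\end{gather*}
has weight strictly smaller than $w(x_{i,j}x_{j,k}) = 2n - (k-i)$. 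Direct computation gives $w(x_{i,k}x_{i,j}) = 2n - (k-i) - (j-i)$, $w(x_{i,k}x_{j,k}) = 2n - (k-i) - (k-j)$, $w(x_{i,k}) = n - (k-i)$, and $w(1) = 0$, each strictly below $2n-(k-i)$ because $j-i,\, k-j \geq 1$ and $k-i \leq n-1 < 2n$.

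Now I would prove the monomial case by strong induction on $w(\mathfrak{m})$. If $\mathfrak{m}$ is already pathless, take $q = \mathfrak{m}$. Otherwise $\mathfrak{m} = \mathfrak{m}'\, x_{i,j}x_{j,k}$ for some $i<j<k$ and some monomial $\mathfrak{m}'$, so the defining generators of $\mathcal{J}$ give
\begin{gather*}
\mathfrak{m} \ \equiv \ \mathfrak{m}'\bigl(x_{i,k}x_{i,j} + x_{i,k}x_{j,k} + \beta\,x_{i,k} + \alpha\bigr) \pmod{\mathcal{J}}.
\end{gather*}
By the property established above, each of the (at most four) monomials on the right has weight $w(\mathfrak{m}')$ plus something strictly less than $w(x_{i,j}x_{j,k})$, hence strictly less than $w(\mathfrak{m})$. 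The inductive hypothesis supplies pathless representatives for each of these monomials, and combining them with the coefficients $1,1,\beta,\alpha$ yields the pathless polynomial $q$ congruent to $\mathfrak{m}$. The only real step requiring thought is choosing a weight that makes \emph{all four} terms in the reduction strictly decrease; once the short-interval weight $w(x_{i,j}) = n - (j-i)$ is in hand, the induction is essentially mechanical, so there is no serious obstacle.
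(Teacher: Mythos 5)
Your proof is correct and follows essentially the same route as the paper's own sketch: the paper also introduces the weight $\sum a_{i,j}(n-j+i)$ (identical to your $w(x_{i,j})=n-(j-i)$ on generators) and argues that each reduction step strictly decreases the weight of every resulting monomial, giving termination. Your write-up just makes the induction and the four weight comparisons explicit.
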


In general, this $q$ is not unique.\footnote{For instance, if $\mathbf{k}=\mathbb{Z}$, $\beta=1$, $\alpha=0$ and $n=4$, then
\begin{gather*}
q_{1} =x_{1,2}x_{1,3}x_{1,4}+x_{1,2}x_{1,4}+x_{1,2}x_{1,4}x_{3,4} +x_{1,3}x_{1,4}+x_{1,3}x_{1,4}x_{2,4}+x_{1,3}x_{2,3}x_{2,4}\\
\hphantom{q_{1} =}{} +x_{1,3}x_{2,4}+x_{1,4}+x_{1,4}x_{2,4}+x_{1,4}x_{2,4}x_{3,4}+x_{1,4}x_{3,4}%
\end{gather*}
and%
\begin{gather*}
q_{2} =x_{1,2}x_{1,3}x_{1,4}+x_{1,2}x_{1,4}+x_{1,2}x_{1,4}x_{3,4}+x_{1,3}x_{1,4}+x_{1,3}x_{1,4}x_{2,3}\\
\hphantom{q_{2} =}{} +x_{1,4}+x_{1,4}x_{2,3}+x_{1,4}x_{2,3}x_{2,4}+x_{1,4}x_{2,4}+x_{1,4}x_{2,4}x_{3,4}+x_{1,4}x_{3,4}
\end{gather*}
are two pathless polynomials $q\in\mathcal{X}$ satisfying $x_{1,2}x_{2,3}x_{3,4}\equiv q\operatorname{mod}\mathcal{J}$, but they are not identical.}

We shall roughly outline a proof of Proposition \ref{prop.path-red.span} now; a detailed writeup of this proof can be found in the detailed version~\cite{verlong} of this paper.

\begin{proof}[Proof of Proposition \ref{prop.path-red.span} (sketched).]The \textit{weight} of a monomial $\prod_{\substack{(i,j) \in[n] ^{2};\\i<j}}x_{i,j}^{a_{i,j}}\in\mathfrak{M}$ shall mean the nonnegative integer $\sum_{\substack{(i,j) \in[n] ^{2};\\i<j}}a_{i,j}(n-j+i) $. If we have a monomial $\mathfrak{m}\in\mathfrak{M}$ that is not pathless, then we can find a triple $ (i,j,k) \in[n] ^{3}$ satisfying $i<j<k$ and $x_{i,j}x_{j,k}\,|\,\mathfrak{m}$; then, we can replace $\mathfrak{m}$ by a polynomial $\widetilde{\mathfrak{m}}=\mathfrak{m}\cdot\frac{x_{i,k}(x_{i,j}+x_{j,k}+\beta) +\alpha}{x_{i,j}x_{j,k}}$, which is congruent to~$\mathfrak{m}$ modulo $\mathcal{J}$ but has the property that all monomials appearing in it have a smaller weight than~$\mathfrak{m}$. This gives rise to a recursive algorithm\footnote{Or ``straightening law'', as algorithms of this kind are commonly called in algebraic combinatorics.} for reducing a polynomial modulo the ideal~$\mathcal{J}$. The procedure will necessarily terminate (although its result might depend on the order of operation); the polynomial resulting at its end will be pathless.
\end{proof}

The ideal $\mathcal{J}$ is relevant to the so-called \textit{subdivision algebra of root polytopes} (denoted by $\mathcal{S}(\beta) $ in \cite[Section~5]{EscMes15} and $\mathcal{S}(A_{n}) $ in \cite[Section~1]{Meszar09}). Namely, this latter algebra is defined as the quotient~$\mathcal{X}/\mathcal{J}$ for a certain choice of~$\mathbf{k}$,~$\beta$ and~$\alpha$ (namely, for the choice where~$\mathbf{k}$ is a univariate polynomial ring over $\mathbb{Q}$, where $\beta$ is the indeterminate in~$\mathbf{k}$, and where $\alpha=0$). This algebra was first introduced by M\'{e}sz\'{a}ros in~\cite{Meszar09} as the abelianization of Anatol Kirillov's quasi-classical
Yang--Baxter algebra.

In \cite[Section~5 and Appendix~A]{EscMes15}, Escobar and M\'{e}sz\'{a}ros (motivated by computations of Grothendieck polynomials) consider the result of substituting~$t_{i}$ for each variable~$x_{i,j}$ in a polynomial $f\in\mathcal{X}$. In our language, this leads to the following definition:

\begin{Definition}Let $\mathcal{T}^{\prime}$ be the polynomial ring $\mathbf{k} [ t_{1},t_{2},\ldots,t_{n-1}] $. We define a $\mathbf{k}$-algebra homomorphism $D\colon \mathcal{X}\rightarrow\mathcal{T}^{\prime}$ by
\begin{gather*}
D ( x_{i,j} ) =t_{i}\qquad \text{for every } (i,j) \in[n] ^{2}\text{ satisfying }i<j.
\end{gather*}
\end{Definition}

The goal of this paper is to prove the following fact, which (in a less general setting) was conjectured by Karola M\'{e}sz\'{a}ros in a 2015 talk at MIT:

\begin{Theorem}\label{thm.t-red.unique}Let $p\in\mathcal{X}$. Consider any pathless polynomial $q\in\mathcal{X}$ such that $p\equiv q\operatorname{mod} \mathcal{J}$. Then, $D(q) $ does not depend on the choice of~$q$ $($but merely on the choice of $\alpha$, $\beta$ and~$p)$.
\end{Theorem}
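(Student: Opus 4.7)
My plan is to reduce the theorem to the construction of a single $\mathbf{k}$-algebra homomorphism $A\colon \mathcal{X} \to \mathcal{L}$ into an auxiliary ring $\mathcal{L}$ of formal Laurent series over $\mathcal{T}'$, together with a $\mathbf{k}$-linear ``coefficient-extraction'' map $\pi\colon \mathcal{L} \to \mathcal{T}'$, arranged so that (i) $A$ annihilates the ideal $\mathcal{J}$ and (ii) $\pi\circ A$ agrees with $D$ on every pathless polynomial. Granted such a pair $(A,\pi)$, the theorem is almost a formality: for any two pathless polynomials $q_{1}, q_{2}$ with $q_{1}\equiv p\equiv q_{2}\operatorname{mod}\mathcal{J}$, we have $A(q_{1})=A(q_{2})$ because $A(q_{1}-q_{2})=0$, and hence $D(q_{1})=\pi(A(q_{1}))=\pi(A(q_{2}))=D(q_{2})$. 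The need for the detour through $\mathcal{L}$ is visible on a single generator: $D$ itself sends $x_{i,j}x_{j,k}-x_{i,k}(x_{i,j}+x_{j,k}+\beta)-\alpha$ to $-t_{i}^{2}-\beta t_{i}-\alpha$, which is generically non-zero, so $D$ certainly does not factor through $\mathcal{X}/\mathcal{J}$ on its own; the auxiliary ring $\mathcal{L}$ will provide ``extra room'' in which this defect can be absorbed and then discarded by $\pi$.

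To construct $A$, I would first seek a rational expression $F(u,v)$ in two free indeterminates satisfying the functional equation $F(u,v)F(v,w) = F(u,w)(F(u,v)+F(v,w)+\beta)+\alpha$. The essential observation is that the binary operation $A\star B := (AB-\alpha)/(A+B+\beta)$ is commutative and associative: a short calculation shows that $(A\star B)\star C$ is a symmetric function of $A,B,C$, so the relations defining $\mathcal{J}$ are equivalent to the ``$\star$-cocycle'' condition $F(u,w) = F(u,v)\star F(v,w)$. Such cocycles are easy to produce; for instance, whenever $\mathbf{k}$ contains a square root $\xi$ of $\beta^{2}/4-\alpha$, the explicit formula $F(u,v) = \xi(u+v)/(u-v)-\beta/2$ satisfies the functional equation. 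Over a general $\mathbf{k}$ one works with a Laurent-series analogue: introduce formal variables $u_{1},\ldots,u_{n}$ in $\mathcal{L}$ and set $A(x_{i,j}) := F(u_{i},u_{j})$. The passage from formal power series (which sufficed for the earlier $\alpha=0$ argument) to Laurent series is forced by the deformation terms coming from $\alpha\neq 0$, which introduce poles that a pure power-series formalism cannot absorb.

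The definition of $\pi$ and the verification of $\pi\circ A = D$ on pathless monomials form the combinatorial heart of the proof, and this is the step I expect to be the main obstacle. The plan is to expand $A(\mathfrak{m}) = \prod F(u_{i},u_{j})^{a_{i,j}}$ as a Laurent series in the $u_{i}$'s and let $\pi$ extract a specific coefficient (e.g.\ a constant term with respect to a carefully chosen subset of the auxiliary variables). The pathless hypothesis on $\mathfrak{m}$ then becomes a clean combinatorial restriction in this picture: because no index $j$ appears simultaneously as upper and lower index, the ``mixing'' terms that would otherwise obstruct the extraction of the right coefficient are simply absent, and what remains collapses to $\prod t_{i}^{a_{i,j}} = D(\mathfrak{m})$. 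Making this precise requires designing the Laurent expansion, choosing $\pi$, and combinatorially tracking how pathlessness interacts with the expansion; none of these items looks individually hard, but together they constitute the main technical task. Once it is in place, property~(ii) holds, and the theorem follows from the chain of equalities in the first paragraph.
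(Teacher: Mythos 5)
Your skeleton is exactly the paper's: an auxiliary Laurent-series algebra, a $\mathbf{k}$-algebra homomorphism $A$ annihilating $\mathcal{J}$, a $\mathbf{k}$-linear extraction map agreeing with $D$ on pathless polynomials, and then the two-line formal deduction you give in your first paragraph (the paper phrases the extraction as $E\circ D=B\circ A$ on pathless polynomials with $E$ injective, rather than $\pi\circ A=D$, but composing $B$ with a left inverse of $E$ gives your $\pi$, so this is only a cosmetic difference). However, both load-bearing constructions are left as announced intentions. First, your explicit $F(u,v)=\xi(u+v)/(u-v)-\beta/2$ requires $2$ to be invertible and $\beta^{2}/4-\alpha$ to admit a square root, so it does not exist over the arbitrary commutative ring $\mathbf{k}$ of the theorem; the deferred ``Laurent-series analogue'' is not a routine fix but the actual ingredient. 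A formula valid over any $\mathbf{k}$ is $F(u,v)=-\frac{uv+\beta v+\alpha}{v-u}$; the paper realizes it as $A(x_{i,j})=-\frac{q_i+\beta+\alpha/q_j}{1-q_i/q_j}$ with $q_i=r_ir_{i+1}\cdots r_n$, which makes $1/(1-q_i/q_j)$ an honest geometric series for $i<j$ and pins down which Laurent expansion is meant. Verifying the functional equation for this $F$ is a computation you have not performed.

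The second and larger gap is the one you flag yourself: the definition of $\pi$ and the proof that $\pi\circ A=D$ on pathless monomials is where essentially all the content of the theorem lives, and the proposal contains only a heuristic for it. Your combinatorial reading of pathlessness is correct (in a pathless monomial no index occurs both as a first and as a second subscript, so the first subscripts form a set $S$ disjoint from the set of second subscripts), but two points must be confronted. The map $\pi$ has to be a \emph{single} linear functional, not one depending on the monomial's $S$, or else the chain $D(q_1)=\pi(A(q_1))=\pi(A(q_2))=D(q_2)$ breaks; and no single ``coefficient of a fixed monomial'' works, since $\pi$ must return the full polynomial $D(\mathfrak{m})$. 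The paper's resolution is the uniform linear map $B$ sending $q_1^{a_1}\cdots q_n^{a_n}$ to $\bigl(\prod_{a_i>0}t_i^{a_i}\bigr)\bigl(\prod_{a_i<0}w^{-a_i}\bigr)$, whose key property is that it is \emph{multiplicative} when restricted to the span of Laurent monomials with exponents nonnegative on $S$ and nonpositive off $S$; every pathless monomial maps under $A$ into such a subalgebra, so the identity $B\circ A=E\circ D$ need only be checked on the generators $x_{i,j}$, where it reduces to $q_i\mapsto t_i$, $q_j^{-1}\mapsto w$. Until you supply an analogue of this mechanism, the proposal is a correct strategy rather than a proof.
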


It is not generally true that $D(q) =D(p) $; thus, Theorem~\ref{thm.t-red.unique} does not follow from a simple ``invariant''.

\section{The proof}\label{sect.proof}

\subsection{Preliminaries}

The proof of Theorem~\ref{thm.t-red.unique} will occupy most of this paper. It proceeds in several steps. First, we shall define four $\mathbf{k}$-algebras $\mathcal{Q}$, $\mathcal{T}^{\prime} [ [w] ] $, $\mathcal{T}$ and $\mathcal{T}[[w]] $ (with $\mathcal{T}^{\prime}$ being a subalgebra of $\mathcal{T}$) and three $\mathbf{k}$-linear maps $A$, $B$ and $E$ (with $A$ and $E$ being $\mathbf{k}$-algebra homomorphisms) forming a diagram
\begin{gather*}
\xymatrix{\calX\ar[r]^{A} \ar[dr]_{D} & \calQ\ar[r]^-{B} & \calT [ [w]] \\
& \calT^\prime\ar[r]_-{E} & \calT^\prime[[w]] \arinj[u]
}
\end{gather*}
(where the vertical arrow is a canonical injection) that is \textit{not} commutative. We shall eventually show that:
\begin{itemize}\itemsep=0pt
\item (Proposition \ref{prop.A.killsJ} below) the homomorphism $A$ annihilates the ideal $\mathcal{J}$,

\item (Proposition \ref{prop.E.inj} below) the homomorphism $E$ is injective, and

\item (Corollary \ref{cor.pathless.D} below) each pathless polynomial $q$ satisfies $( E\circ D) (q) =( B\circ A ) (q) $ (the equation makes sense since $\mathcal{T}^{\prime}[[w]] \subseteq\mathcal{T}[[w]]$).
\end{itemize}

These three facts will allow us to prove Theorem~\ref{thm.t-red.unique}. Indeed, the first and the third will imply that each pathless polynomial in~$\mathcal{J}$ is annihilated by~$E\circ D$; because of the second, this will
show that it is also annihilated by~$D$; and from here, Theorem~\ref{thm.t-red.unique} will easily follow.

\subsection[The algebra $\mathcal{Q}$ of Laurent series]{The algebra $\boldsymbol{\mathcal{Q}}$ of Laurent series}

Let us begin by defining the notion of (formal) Laurent series in $n$ indeterminates $r_{1},r_{2},\ldots,r_{n}$. This is somewhat slippery terrain, and it is easy to accidentally get a non-working definition (e.g., a notion of ``Laurent series'' not closed under multiplication, or not allowing multiplication at all), but there are also several different working definitions (see, e.g., \cite{MonKau13} for a~systematic treatment revealing many degrees of freedom). The definition we shall give here has been tailored to make our constructions work.

We begin by defining a $\mathbf{k}$-module $\mathcal{Q}^{\pm}$ of ``two-sided infinite formal power series over $\mathbf{k}$''; this is not going to be a ring:

\begin{Definition} \label{def.Qbig}Consider $n$ distinct symbols $r_{1},r_{2},\ldots,r_{n}$. Let $\mathfrak{R}$ denote the free abelian group on these $n$ symbols, written multiplicatively. (That is, $\mathfrak{R}$ is the free $\mathbb{Z}$-module on $n$ generators $r_{1},r_{2},\ldots,r_{n}$, but with the addition renamed as multiplication.) The elements of $\mathfrak{R}$ thus have the form $r_{1}^{a_{1}}r_{2}^{a_{2}}\cdots r_{n}^{a_{n}}$ for $( a_{1},a_{2},\ldots,a_{n}) \in\mathbb{Z}^{n}$; we shall refer to such elements as \textit{Laurent monomials} in the symbols $r_{1},r_{2},\ldots,r_{n}$.

Informally, we let $\mathcal{Q}^{\pm}$ denote the $\mathbf{k}$-module of all ``infinite $\mathbf{k}$-linear combinations'' of Laurent monomials. Formally speaking, we define $\mathcal{Q}^{\pm}$ as the direct product $\prod_{\mathfrak{r}\in\mathfrak{R}}\mathbf{k}$ of copies of~$\mathbf{k}$ indexed by Laurent monomials. We want to write each element $ ( \lambda_{\mathfrak{r}}) _{\mathfrak{r}\in\mathfrak{R}}\in \prod_{\mathfrak{r}\in\mathfrak{R}}\mathbf{k}$ of this direct product as the formal $\mathbf{k}$-linear combination $\sum_{\mathfrak{r}\in\mathfrak{R}}\lambda_{\mathfrak{r}}\mathfrak{r}$; in order for this to work, we make
several further conventions: First, we identify each Laurent monomial $\mathfrak{s}\in\mathfrak{R}$ with the element $( \delta_{\mathfrak{s},\mathfrak{r}}) _{\mathfrak{r}\in\mathfrak{R}}$ of $\mathcal{Q}^{\pm}$ (where $\delta_{\mathfrak{s},\mathfrak{r}}$ is the Kronecker delta). Second, we equip the $\mathbf{k}$-module $\mathcal{Q}^{\pm}$ with a topology: namely, the product topology, defined by recalling that it is a direct product
$\prod_{\mathfrak{r}\in\mathfrak{R}}\mathbf{k}$ of copies of $\mathbf{k}$ (each of which is equipped with the discrete topology). Having made these conventions, we can easily verify that each element $( \lambda_{\mathfrak{r}}) _{\mathfrak{r}\in\mathfrak{R}}$ of $\prod_{\mathfrak{r}\in\mathfrak{R}}\mathbf{k}=\mathcal{Q}^{\pm}$ is indeed identical with the infinite sum $\sum_{\mathfrak{r}\in\mathfrak{R}}\lambda_{\mathfrak{r}}\mathfrak{r}$ (which makes sense because of the topology on $\mathcal{Q}^{\pm}$). As usual, if $f= ( \lambda_{\mathfrak{r}} ) _{\mathfrak{r}\in\mathfrak{R}}$ is an element of~$\mathcal{Q}^{\pm}$, then $\lambda_{\mathfrak{r}}$ (for a given $\mathfrak{r}\in\mathfrak{R}$) will be called the \textit{coefficient} of $\mathfrak{r}$ in $f$ and denoted by $ [ \mathfrak{r} ] f$.
\end{Definition}

As we know, the Laurent monomials in $\mathfrak{R}$ have the form $r_{1}^{a_{1}}r_{2}^{a_{2}}\cdots r_{n}^{a_{n}}$ for $( a_{1},a_{2},\ldots,a_{n}) \in\mathbb{Z}^{n}$; thus, sums of the form $\sum_{\mathfrak{r}\in\mathfrak{R}}\lambda_{\mathfrak{r}}\mathfrak{r}$ can also be rewritten in the form
\begin{gather*}
\sum_{( a_{1},a_{2},\ldots,a_{n})\in\mathbb{Z}^{n}}\lambda_{a_{1},a_{2},\ldots,a_{n}}r_{1}^{a_{1}}r_{2}^{a_{2}}\cdots r_{n}^{a_{n}} ;
\end{gather*}
this is the usual way in which elements of $\mathcal{Q}^{\pm}$ are written.

For example, for $n=1$, an element of $\mathcal{Q}^{\pm}$ will have the form $\sum_{a\in\mathbb{Z}}\lambda_{a}r_{1}^{a}$ for some family $ (\lambda_{a}) _{a\in\mathbb{Z}}$ of elements of~$\mathbf{k}$. Already in this simple situation, we see that $\mathcal{Q}^{\pm}$ is not a ring (or, at least, the usual recipe for multiplying power series does not work in $\mathcal{Q}^{\pm}$): multiplying $\sum_{a\in\mathbb{Z}}r_{1}^{a}$ with itself
would result in
\begin{gather*}
\left( \sum_{a\in\mathbb{Z}}r_{1}^{a}\right) \left( \sum_{a\in\mathbb{Z}}r_{1}^{a}\right) =\sum_{\left( a,b\right) \in\mathbb{Z}^{2}}r_{1}^{a+b},
\end{gather*}
which is not a convergent sum in any reasonable topology (it contains each Laurent monomial infinitely many times). We shall define Laurent series as a~subring of $\mathcal{Q}^{\pm}$:

\begin{Definition}\label{def.laurent}\quad
\begin{enumerate}\itemsep=0pt
\item[(a)] If $d$ is an integer and $\mathfrak{r} \in\mathfrak{R}$ is a Laurent monomial, then we say that $\mathfrak{r}$ \textit{lives above }$d$ if and only if $\mathfrak{r}=r_{1}^{a_{1}}r_{2}^{a_{2}}\cdots r_{n}^{a_{n}}$ for some $( a_{1},a_{2},\ldots,a_{n}) \in\{ d,d+1,d+2,\ldots\} ^{n}$.

\item[(b)] If $d$ is an integer and $f$ is an element of $\mathcal{Q}^{\pm}$, then we say that $f$ is \textit{supported above }$d$ if and only if every $\left( a_{1},a_{2},\ldots,a_{n}\right) \in\mathbb{Z}^{n}\setminus\{d,d+1,d+2,\ldots \} ^{n}$ satisfies $\left[ r_{1}^{a_{1}}r_{2}^{a_{2} }\cdots r_{n}^{a_{n}}\right] f=0$. In other words, $f$ is supported above $d$ if and only if $f$ is an infinite $\mathbf{k}$-linear combination of Laurent monomials that live above $d$.

\item[(c)] An element $f\in\mathcal{Q}^{\pm}$ is said to be a \textit{Laurent series} if and only if there exists some $d\in\mathbb{Z}$ such that $f$ is supported above $d$.

\item[(d)] We let $\mathbf{k}\left( \left( r_{1},r_{2},\ldots
,r_{n}\right) \right) $ denote the $\mathbf{k}$-submodule of $\mathcal{Q}%
^{\pm}$ consisting of all Laurent series.

\item[(e)] A multiplication can be defined on $\mathbf{k}(( r_{1},r_{2},\ldots,r_{n})) $ by extending the multiplication in the group $\mathfrak{R}$ (in such a way that the resulting map is bilinear and continuous). Explicitly, this means that if $f=\sum_{\mathfrak{r} \in\mathfrak{R}}\lambda_{\mathfrak{r}}\mathfrak{r}$ and $g=\sum_{\mathfrak{r} \in\mathfrak{R}}\mu_{\mathfrak{r}}\mathfrak{r}$ are two Laurent series, then their product~$fg$ is defined as the Laurent series%
\begin{gather*}
\left( \sum_{\mathfrak{r}\in\mathfrak{R}}\lambda_{\mathfrak{r}} \mathfrak{r}\right) \left( \sum_{\mathfrak{r}\in\mathfrak{R}}\mu _{\mathfrak{r}}\mathfrak{r}\right) =\sum_{\mathfrak{u}\in\mathfrak{R}} \sum_{\mathfrak{v}\in\mathfrak{R}}\lambda_{\mathfrak{u}}\mu_{\mathfrak{v} }\mathfrak{uv}=\sum_{\mathfrak{r}\in\mathfrak{R}}\left( \sum _{\substack{\left( \mathfrak{u},\mathfrak{v}\right) \in\mathfrak{R}^{2};\\\mathfrak{uv}=\mathfrak{r}}}\lambda_{\mathfrak{u}}\mu_{\mathfrak{v}}\right) \mathfrak{r}.
\end{gather*}
The inner sum $\sum_{\substack{ ( \mathfrak{u},\mathfrak{v} ) \in\mathfrak{R}^{2};\\\mathfrak{uv}=\mathfrak{r}}}\lambda_{\mathfrak{u}} \mu_{\mathfrak{v}}$ here is well-defined, because all but finitely many of its addends are zero. (In fact, if $f$ is supported above~$d$, and if~$g$ is supported above $e$, then (for each given $\mathfrak{r}\in\mathfrak{R}$) there are only finitely many pairs $ ( \mathfrak{u},\mathfrak{v} ) \in\mathfrak{R}^{2}$ such that $\mathfrak{uv}=\mathfrak{r}$ and $\mathfrak{u}$ lives above $d$ and $\mathfrak{v}$ lives above $e$; but these are the only pairs that can contribute nonzero addends to the sum $\sum_{\substack{(\mathfrak{u},\mathfrak{v}) \in\mathfrak{R}^{2};\\\mathfrak{uv}=\mathfrak{r}}}\lambda_{\mathfrak{u}}\mu_{\mathfrak{v}}$.)

Thus, $\mathbf{k} ( ( r_{1},r_{2},\ldots,r_{n}) ) $ becomes a $\mathbf{k}$-algebra with unity $1=r_{1}^{0}r_{2}^{0}\cdots r_{n}^{0}$. We denote this $\mathbf{k}$-algebra by $\mathcal{Q}$. Note that $\mathcal{Q}$ is a topological $\mathbf{k}$-algebra; its topology is inherited from $\mathcal{Q}^{\pm}$.

\item[(f)] An element $f\in\mathcal{Q}^{\pm}$ is said to be a \textit{formal power series} if and only if $f$ is supported above~$0$.

\item[(g)] We let $\mathbf{k}[ [ r_{1},r_{2},\ldots,r_{n}] ] $ denote the $\mathbf{k}$-submodule of $\mathcal{Q}^{\pm}$ consisting of all formal power series. Thus, $\mathbf{k} [ [ r_{1},r_{2},\ldots,r_{n}] ] \subseteq\mathcal{Q}\subseteq\mathcal{Q}^{\pm}$.
\end{enumerate}
\end{Definition}

We now define certain Laurent monomials $q_{1},q_{2},\ldots,q_{n}$ that we shall often use:

\begin{Definition} \label{def.Q'}For each $i\in[n] $, we define a Laurent monomial $q_{i}$ in the indeterminates $r_{1},r_{2},\ldots,r_{n}$ by $q_{i}=r_{i}r_{i+1}\cdots r_{n}$. Notice that this $q_{i}$ is an actual monomial, not only a Laurent monomial.

Notice that each Laurent monomial in $\mathfrak{R}$ belongs to $\mathcal{Q}$. Each of the elements $q_{1},q_{2},\ldots,q_{n}$ of~$\mathcal{Q}$ is a Laurent monomial, and thus has an inverse (in $\mathfrak{R}$ and thus also in
$\mathcal{Q}$). Hence, it makes sense to speak of quotients such as $q_{i}/q_{j}$ for $1\leq i\leq j\leq n$. Explicitly, $q_{i}/q_{j}=r_{i} r_{i+1}\cdots r_{j-1}$ whenever $1\leq i\leq j\leq n$. Thus, for any $i\in[n] $ and $j\in[n] $ satisfying $i<j$, the difference $1-q_{i}/q_{j}=1-r_{i}r_{i+1}\cdots r_{j-1}$ is a formal power series in $\mathbf{k} [ [ r_{1},r_{2},\ldots,r_{n} ] ] $ having constant term~$1$; it is therefore invertible in $\mathbf{k}[[ r_{1},r_{2},\ldots,r_{n}]] $.
\end{Definition}

It is easy to see that
\begin{gather}
q_{1}^{a_{1}}q_{2}^{a_{2}}\cdots q_{n}^{a_{n}}=r_{1}^{a_{1}}r_{2}^{a_{1}+a_{2}}r_{3}^{a_{1}+a_{2}+a_{3}}\cdots r_{n}^{a_{1}+a_{2}+\cdots+a_{n}}\label{eq.Q'.reindex}
\end{gather}
for all $ ( a_{1},a_{2},\ldots,a_{n} ) \in\mathbb{Z}^{n}$. Also,
\begin{gather*}
r_{1}^{b_{1}}r_{2}^{b_{2}}\cdots r_{n}^{b_{n}}=q_{1}^{b_{1}}q_{2}^{b_{2}-b_{1}}q_{3}^{b_{3}-b_{2}}\cdots q_{n}^{b_{n}-b_{n-1}} 
\end{gather*}
for all $( b_{1},b_{2},\ldots,b_{n}) \in\mathbb{Z}^{n}$. Thus, each Laurent monomial $\mathfrak{r}\in\mathfrak{R}$ can be written uniquely in the form $q_{1}^{a_{1}}q_{2}^{a_{2}}\cdots q_{n}^{a_{n}}$ with $(a_{1},a_{2},\ldots,a_{n} ) \in\mathbb{Z}^{n}$. Thus, $ \big(q_{1}^{a_{1}}q_{2}^{a_{2}}\cdots q_{n}^{a_{n}}\big) _{( a_{1},a_{2},\ldots,a_{n}) \in\mathbb{Z}^{n}}$ is a topological basis\footnote{The notion of a ``topological basis'' that we are using here has nothing to do with the concept of a~basis of a~topology (also known as ``base''). Instead, it is merely an analogue of the concept of a~basis of a $\mathbf{k}$-module. It is defined as follows:

A \textit{topological basis} of a topological $\mathbf{k}$-module $\mathcal{M}$ means a family $ ( m_{s} ) _{s\in\mathfrak{S}}\in\mathcal{M}^{\mathfrak{S}}$ with the following two properties:
\begin{itemize}\itemsep=0pt
\item For each family $\left( \lambda_{s}\right) _{s\in\mathfrak{S}} \in\mathbf{k}^{\mathfrak{S}}$, the sum $\sum_{s\in\mathfrak{S}}\lambda _{s}m_{s}$ converges with respect to the topology on $\mathcal{M}$. (Such a~sum is called an \textit{infinite} $\mathbf{k}$\textit{-linear combination} of the family $ ( m_{s} ) _{s\in\mathfrak{S}}$.)
\item Each element of $\mathcal{M}$ can be uniquely represented in the form $\sum_{s\in\mathfrak{S}}\lambda_{s}m_{s}$ for some family $( \lambda _{s}) _{s\in\mathfrak{S}}\in\mathbf{k}^{\mathfrak{S}}$.
\end{itemize}

For example, $ ( r_{1}^{b_{1}}r_{2}^{b_{2}}\cdots r_{n}^{b_{n}} ) _{( b_{1},b_{2},\ldots,b_{n}) \in\mathbb{N}^{n}}$ is a topological basis of the topological $\mathbf{k}$-module $\mathbf{k} [ [ r_{1},r_{2},\ldots,r_{n} ] ] $, because each power series can be uniquely represented as an infinite $\mathbf{k}$-linear combination of all the monomials.} of the $\mathbf{k}$-module $\mathcal{Q}^{\pm}$.

\subsection[The algebra homomorphism $A\colon \mathcal{X}\rightarrow\mathcal{Q}$]{The algebra homomorphism $\boldsymbol{A\colon \mathcal{X}\rightarrow\mathcal{Q}}$}

\begin{Definition}\label{def.A}Define a $\mathbf{k}$-algebra homomorphism $A\colon \mathcal{X}\rightarrow\mathcal{Q}$ by
\begin{gather*}
A ( x_{i,j} ) =-\frac{q_{i}+\beta+\alpha/q_{j}}{1-q_{i}/q_{j}}\qquad \text{for all }(i,j) \in[n] ^{2}\text{ satisfying }i<j.
\end{gather*}
Notice that this is well-defined, since all denominators appearing here are invertible (indeed, $q_{j}$~is an invertible Laurent monomial in~$\mathfrak{R}$, and $1-q_{i}/q_{j}$ is an invertible formal power series in
$\mathbf{k} [[ r_{1},r_{2},\ldots,r_{n} ] ] $).
\end{Definition}

\begin{Proposition}\label{prop.A.killsJ}We have $A ( \mathcal{J} ) =0$.
\end{Proposition}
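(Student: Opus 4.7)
\noindent\textbf{Proposed proof of Proposition \ref{prop.A.killsJ}.} The plan is to verify the claim on generators of $\mathcal{J}$. Since $A\colon \mathcal{X} \to \mathcal{Q}$ is a $\mathbf{k}$-algebra homomorphism, it is enough to show $A(g_{i,j,k}) = 0$ for each triple $(i,j,k)$ with $1 \le i < j < k \le n$, where $g_{i,j,k} := x_{i,j}x_{j,k} - x_{i,k}(x_{i,j} + x_{j,k} + \beta) - \alpha$. Fix such a triple and abbreviate $u := q_i$, $v := q_j$, $w := q_k$. Multiplying numerator and denominator of each defining fraction by the appropriate power of $q_\bullet$, I would first rewrite
\[
A(x_{i,j}) = -\frac{uv + \beta v + \alpha}{v - u}, \qquad A(x_{j,k}) = -\frac{vw + \beta w + \alpha}{w - v}, \qquad A(x_{i,k}) = -\frac{uw + \beta w + \alpha}{w - u},
\]
where each denominator is a unit in $\mathcal{Q}$. (For instance $v - u = q_j(1 - q_i/q_j)$, with $q_j \in \mathfrak{R}$ a unit and $1 - q_i/q_j$ invertible in $\mathbf{k}[[r_1, \ldots, r_n]] \subseteq \mathcal{Q}$ by its constant term being $1$.)

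Next I would reduce the claim to a polynomial identity by clearing denominators. A brief preliminary computation shows
\[
A(x_{i,j}) + A(x_{j,k}) + \beta = \frac{(u-w)(v^2 + \beta v + \alpha)}{(v-u)(w-v)},
\]
so that when this is multiplied by $A(x_{i,k})$ the factor $w - u$ in the denominator of $A(x_{i,k})$ cancels against the factor $u - w$ in the numerator. After this simplification, the equation $A(g_{i,j,k}) = 0$ becomes equivalent, via multiplication by $(v-u)(w-v)$, to the single polynomial identity
\[
(uv + \beta v + \alpha)(vw + \beta w + \alpha) - (uw + \beta w + \alpha)(v^2 + \beta v + \alpha) = \alpha(v-u)(w-v)
\]
in $\mathbf{k}[u, v, w]$.

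The remaining step is to check this polynomial identity by direct expansion. I expect the $\alpha$-free contributions (those in $\mathbf{k}[u,v,w,\beta]$) to cancel in matching pairs, and likewise the $\alpha^2$ terms; what should survive are the $\alpha$-linear contributions, which collapse to $\alpha u(v-w) + \alpha v(w-v) = \alpha(u-v)(v-w) = \alpha(v-u)(w-v)$, precisely the right-hand side. Dividing back by $(v-u)(w-v)$ inside $\mathcal{Q}$ (legal because that product is a unit) then yields $A(g_{i,j,k}) = 0$. The main obstacle is purely combinatorial bookkeeping — several $\alpha^i\beta^j$-coefficients must be tracked simultaneously — but no conceptual difficulty arises; the essential ingredients are only that $A$ is a homomorphism and that every denominator introduced is a unit in $\mathcal{Q}$, so that formal manipulations can be carried out as if inside a field of rational functions in $u, v, w$.
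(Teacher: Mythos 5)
Your proposal is correct and follows the same route as the paper: reduce to the generators $x_{i,j}x_{j,k}-x_{i,k}(x_{i,j}+x_{j,k}+\beta)-\alpha$ of $\mathcal{J}$ using that $A$ is a $\mathbf{k}$-algebra homomorphism, then verify $A$ kills each generator by direct computation (which the paper relegates to its detailed version). Your intermediate identity $A(x_{i,j})+A(x_{j,k})+\beta=\frac{(u-w)(v^2+\beta v+\alpha)}{(v-u)(w-v)}$ and the final polynomial identity both check out, and the unit-denominator justifications are exactly what is needed to work in $\mathcal{Q}$.
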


\begin{proof}The ideal $\mathcal{J}$ of $\mathcal{X}$ is generated by all elements of the form $x_{i,j}x_{j,k}-x_{i,k} ( x_{i,j}+x_{j,k}+\beta ) -\alpha$ for all triples
$(i,j,k) \in[n] ^{3}$ satisfying $i<j<k$. Thus, it suffices to show that $A ( x_{i,j}x_{j,k}-x_{i,k} ( x_{i,j}+x_{j,k}+\beta ) -\alpha ) =0$ for all such triples. But this is a~straightforward computation (see~\cite{verlong} for the details).
\end{proof}

\subsection[The algebras $\mathcal{T}$ and $\protect{\mathcal{T}[[w]]}$ of power series]{The algebras $\boldsymbol{\mathcal{T}}$ and $\boldsymbol{\mathcal{T}[[w]]}$ of power series}

\begin{Definition}\quad\begin{enumerate}\itemsep=0pt
\item[(a)] Let $\mathcal{T}$ be the topological $\mathbf{k}$-algebra $\mathbf{k} [ [ t_{1},t_{2},\ldots,t_{n} ] ] $. This is the ring of formal power series in the $n$ indeterminates $t_{1},t_{2},\ldots,t_{n}$ over $\mathbf{k}$.

The topology on $\mathcal{T}$ shall be the usual one (i.e., the one defined similarly to the one on~$\mathcal{Q}^{\pm}$).

\item[(b)] We shall regard the canonical injections%
\begin{gather*}
\mathcal{T}^{\prime}=\mathbf{k} [ t_{1},t_{2},\ldots,t_{n-1} ] \hookrightarrow\mathbf{k}[ t_{1},t_{2},\ldots,t_{n}]\hookrightarrow\mathbf{k} [ [ t_{1},t_{2},\ldots,t_{n} ] ] =\mathcal{T}
\end{gather*}
as inclusions. Thus, $\mathcal{T}^{\prime}$ becomes a~$\mathbf{k}$-subalgebra of~$\mathcal{T}$. Hence, $D\colon \mathcal{X}\rightarrow\mathcal{T}^{\prime}$ becomes a~$\mathbf{k}$-algebra homomorphism $\mathcal{X}\rightarrow
\mathcal{T}$.

\item[(c)] We consider the $\mathbf{k}$-algebras $\mathcal{T} [ [ w] ] $ and $\mathcal{T}^{\prime}[ [w]] $. These are the $\mathbf{k}$-algebras of formal power series in a~(new) indeterminate $w$ over $\mathcal{T}$ and over $\mathcal{T}^{\prime}$, respectively. We endow the $\mathbf{k}$-algebra $\mathcal{T}[ [w]] $ with a topology defined as the product topology, where~$\mathcal{T}[[w]] $ is identified with a direct product of infinitely many copies of $\mathcal{T}$ (each of which is equipped with the topology we previously defined).
\end{enumerate}
\end{Definition}

\subsection[The continuous $\mathbf{k}$-linear map $\protect{B\colon \mathcal{Q}\rightarrow\mathcal{T}[[w]]}$]{The continuous $\boldsymbol{\mathbf{k}}$-linear map $\boldsymbol{B\colon \mathcal{Q}\rightarrow\mathcal{T}[[w]]}$}

We have $\mathcal{T}=\mathbf{k}[ [ t_{1},t_{2},\ldots,t_{n}] ] $. Thus, $\mathcal{T}[ [w]] $ can be regarded as the ring of formal power series in the $n+1$ indeterminates $t_{1},t_{2},\ldots,t_{n},w$ over~$\mathbf{k}$. (Strictly speaking, this should say that there is a canonical topological $\mathbf{k}$-algebra isomorphism from $\mathcal{T}[[w]]$ to
the latter ring). Let us now show a simple lemma:

\begin{Lemma}\label{lem.B.wd1}Let $\mathfrak{m}$ be a monomial in the indeterminates $t_{1},t_{2},\ldots,t_{n},w$ $($with nonnegative exponents$)$. Then, there exist only finitely many $( a_{1},a_{2},\ldots,a_{n}) \in\mathbb{Z}^{n}$ satisfying
\begin{gather}
\left( \prod_{\substack{i\in[n] ;\\a_{i}>0}}t_{i}^{a_{i}}\right) \left( \prod_{\substack{i\in[n] ;\\a_{i}<0}}w^{-a_{i}}\right) =\mathfrak{m}. \label{eq.lem.B.wd1.eq}
\end{gather}
\end{Lemma}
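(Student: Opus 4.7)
The plan is to decode the left-hand side of \eqref{eq.lem.B.wd1.eq} into two separate numerical constraints on the tuple $(a_1, \ldots, a_n)$ by matching exponents against a fixed normal form of $\mathfrak{m}$. Specifically, I first write $\mathfrak{m} = t_1^{b_1} t_2^{b_2} \cdots t_n^{b_n} w^c$ with uniquely determined $b_1, \ldots, b_n, c \in \mathbb{N}$. Comparing the exponent of each $t_i$ on both sides of \eqref{eq.lem.B.wd1.eq} yields $\max(a_i, 0) = b_i$, and comparing exponents of $w$ yields $\sum_{i \in [n]} \max(-a_i, 0) = c$.

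From the first family of equations I would then observe the key dichotomy: if $b_i > 0$, then $a_i = b_i$ is forced (in particular $a_i$ is determined by $\mathfrak{m}$), whereas if $b_i = 0$, then $a_i$ must be a non-positive integer but is otherwise unconstrained by the $t_i$-equation. Let $I = \{i \in [n] : b_i = 0\}$ be the index set on which $a_i$ remains flexible, and set $d_i = -a_i \ge 0$ for $i \in I$. The second constraint then becomes $\sum_{i \in I} d_i = c$.

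Thus, enumerating solutions $(a_1, \ldots, a_n)$ to \eqref{eq.lem.B.wd1.eq} reduces to enumerating weak compositions of the nonnegative integer $c$ into $|I|$ parts. The number of such compositions is $\binom{c + |I| - 1}{|I| - 1}$ when $|I| \ge 1$, and is either $1$ (if $c = 0$) or $0$ (if $c > 0$) when $I = \emptyset$; in every case it is finite, which completes the proof.

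I do not expect any serious obstacle here: the lemma is a purely combinatorial unpacking, and the only minor care needed is to treat the degenerate case $I = \emptyset$ (equivalently, $b_i > 0$ for every $i$), where one must separately check that either there is a unique solution or none at all depending on whether $c = 0$.
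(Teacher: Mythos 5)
Your proposal is correct and follows essentially the same route as the paper: both write $\mathfrak{m}=t_1^{b_1}\cdots t_n^{b_n}w^c$ and compare exponents to obtain $\max(a_i,0)=b_i$ and $\sum_i\max(-a_i,0)=c$. The only difference is that the paper concludes by the cruder observation that every solution lies in the finite set $\bigl(\{-c,\ldots,0\}\cup\{b_1,\ldots,b_n\}\bigr)^n$, whereas you refine this to an exact count via weak compositions of $c$; both correctly yield finiteness.
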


\begin{proof} Write $\mathfrak{m}$ in the form $\mathfrak{m}=\Big( \prod_{i\in[n] }t_{i}^{b_{i}}\Big) w^{c}$ for some nonnegative integers $b_{1},b_{2},\ldots,b_{n},c$. Let $S$ be the finite set $ \{ -c,-c+1,\ldots,0 \} \cup \{ b_{1} ,b_{2},\ldots,b_{n} \} $. Hence, $S^{n}$ is also a~finite set.

We want to prove that there exist only finitely many $ ( a_{1},a_{2},\ldots,a_{n} ) \in\mathbb{Z}^{n}$ satisfying~(\ref{eq.lem.B.wd1.eq}). We shall show that each such $ ( a_{1},a_{2},\ldots,a_{n} ) $ belongs to the set~$S^{n}$.

Indeed, let $ ( a_{1},a_{2},\ldots,a_{n} ) \in\mathbb{Z}^{n}$ satisfy (\ref{eq.lem.B.wd1.eq}). We must show that $ ( a_{1},a_{2},\ldots,a_{n} ) $ belongs to~$S^{n}$.

Let $j\in[n] $. We want to prove that $a_{j}\in S$.

We know that (\ref{eq.lem.B.wd1.eq}) holds. Thus
\begin{gather*}
\left( \prod_{\substack{i\in[n] ;\\a_{i}>0}}t_{i}^{a_{i}}\right) \left( \prod_{\substack{i\in[n] ;\\a_{i}<0}}w^{-a_{i}}\right) =\mathfrak{m}=\left( \prod_{i\in[n] }t_{i}^{b_{i}}\right) w^{c}.
\end{gather*}
This is an equality between two monomials in the indeterminates $t_{1},t_{2},\ldots,t_{n},w$. Comparing exponents on both sides of this equality, we find that
\begin{gather}
b_{i}=
\begin{cases}
a_{i}, & \text{if }a_{i}>0,\\
0, & \text{otherwise}
\end{cases}
\qquad \text{for each }i\in[n] \label{pf.lem.B.wd1.bi=}
\end{gather}
and
\begin{gather}
c=\sum_{\substack{i\in[n] ;\\a_{i}<0}} ( -a_{i} ) .\label{pf.lem.B.wd1.c=}
\end{gather}

Now, we are in one of the following three cases:
\begin{enumerate}\itemsep=0pt
\item[] \textit{Case 1:} We have $a_{j}<0$.

\item[] \textit{Case 2:} We have $a_{j}=0$.

\item[] \textit{Case 3:} We have $a_{j}>0$.
\end{enumerate}

Let us first consider Case 1. In this case, we have $a_{j}<0$. Thus, $-a_{j}$ is one of the addends in the sum $\sum_{\substack{i\in[n] ;\\a_{i}<0}}(-a_{i}) $. Since this sum is greater or equal to each of its addends (because its addends are positive), we thus obtain $\sum_{\substack{i\in[n] ;\\a_{i}<0}}(-a_{i}) \geq-a_{j}$. Hence, (\ref{pf.lem.B.wd1.c=}) becomes $c=\sum_{\substack{i\in [n] ;\\a_{i}<0}}(-a_{i}) \geq-a_{j}$. In other
words, $a_{j}\geq-c$. Combining this with $a_{j}<0$, we find
\begin{gather*}
a_{j}\in\{ -c,-c+1,\ldots,-1\} \subseteq \{ -c,-c+1,\ldots,0 \} \cup \{ b_{1},b_{2},\ldots,b_{n} \} =S.
\end{gather*}
Thus, $a_{j}\in S$ is proven in Case 1.

Let us now consider Case 2. In this case, we have $a_{j}=0$. Hence,
\begin{gather*}
a_{j}\in \{ -c,-c+1,\ldots,0 \} \subseteq \{ -c,-c+1,\ldots ,0 \} \cup \{ b_{1},b_{2},\ldots,b_{n} \} =S.
\end{gather*}
Thus, $a_{j}\in S$ is proven in Case 2.

Let us finally consider Case 3. In this case, we have $a_{j}>0$. Applying (\ref{pf.lem.B.wd1.bi=}) to $i=j$, we find
\begin{gather*}
b_{j}=
\begin{cases}
a_{j}, & \text{if }a_{j}>0;\\
0, & \text{otherwise}
\end{cases}
=a_{j}\qquad ( \text{since }a_{j}>0 ) ,
\end{gather*}
so that%
\begin{gather*}
a_{j}=b_{j}\in \{ b_{1},b_{2},\ldots,b_{n} \} \subseteq \{ -c,-c+1,\ldots,0\} \cup\{ b_{1},b_{2},\ldots,b_{n}\} =S.
\end{gather*}
Thus, $a_{j}\in S$ is proven in Case 3.

We have now proven $a_{j}\in S$ in all three Cases 1, 2 and 3. Hence, $a_{j}\in S$ always holds.

Forget that we have fixed $j$. We thus have shown that $a_{j}\in S$ for each $j\in[n] $. In other words, $ ( a_{1},a_{2},\ldots,a_{n}) $ belongs to $S^{n}$.

Now, forget that we fixed $ ( a_{1},a_{2},\ldots,a_{n} ) $. We thus have shown that each $ ( a_{1},a_{2},\ldots,a_{n} ) \in\mathbb{Z}^{n}$ satisfying (\ref{eq.lem.B.wd1.eq}) belongs to~$S^{n}$. Since $S^{n}$ is a finite set, this shows that there exist only finitely many $( a_{1},a_{2},\ldots,a_{n}) \in\mathbb{Z}^{n}$ satisfying~(\ref{eq.lem.B.wd1.eq}). This proves Lemma~\ref{lem.B.wd1}.
\end{proof}

\begin{Definition}We define a continuous $\mathbf{k}$-linear map $B\colon \mathcal{Q}^{\pm} \rightarrow\mathcal{T}[[w]] $ by setting
\begin{gather*}
B\big( q_{1}^{a_{1}}q_{2}^{a_{2}}\cdots q_{n}^{a_{n}}\big) =\left(
\prod_{\substack{i\in[n] ;\\a_{i}>0}}t_{i}^{a_{i}}\right)
\left( \prod_{\substack{i\in[n] ;\\a_{i}<0}}w^{-a_{i}}\right)
\qquad \text{for each } ( a_{1},a_{2},\ldots ,a_{n} ) \in\mathbb{Z}^{n}.
\end{gather*}
This is well-defined, since $ ( q_{1}^{a_{1}}q_{2}^{a_{2}}\cdots q_{n}^{a_{n}} ) _{( a_{1},a_{2},\ldots,a_{n}) \in \mathbb{Z}^{n}}$ is a topological basis of $\mathcal{Q}^{\pm}$, and because of Lemma~\ref{lem.B.wd1} (which guarantees convergence when the map~$B$ is applied to an infinite $\mathbf{k}$-linear combination of Laurent monomials).

The $\mathbf{k}$-linear map $B\colon \mathcal{Q}^{\pm}\rightarrow\mathcal{T} [ [w] ] $ can be restricted to the $\mathbf{k}$-submodule $\mathcal{Q}$ of $\mathcal{Q}^{\pm}$. We denote this restriction by~$B$ as well. In the following, we shall only be concerned with this restriction.
\end{Definition}

Of course, $B$ is (in general) not a $\mathbf{k}$-algebra homomorphism.

\subsection[The $\mathbf{k}$-algebra monomorphism $\protect{E\colon \mathcal{T}^{\prime }\rightarrow\mathcal{T}^{\prime}[[w]]}$]{The $\boldsymbol{\mathbf{k}}$-algebra monomorphism $\boldsymbol{E\colon \mathcal{T}^{\prime }\rightarrow\mathcal{T}^{\prime}[[w]]}$}

\begin{Definition} We define a $\mathbf{k}$-algebra homomorphism $E\colon \mathcal{T}^{\prime}\rightarrow\mathcal{T}^{\prime}[[w]] $ by
\begin{gather*}
E(t_{i}) =-\frac{t_{i}+\beta+\alpha w}{1-t_{i}w} \qquad \text{for each }i\in[n-1] .
\end{gather*}
This is well-defined (by the universal property of the polynomial ring $\mathcal{T}^{\prime}=\mathbf{k}[ t_{1},t_{2},\ldots,t_{n-1}] $), because for each $i\in[n-1] $, the power series $1-t_{i}w$ is invertible in $\mathcal{T}^{\prime}[[w]] $ (indeed, its constant term is~$1$).
\end{Definition}

\begin{Proposition}\label{prop.E.inj}The homomorphism $E$ is injective.
\end{Proposition}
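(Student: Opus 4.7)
The plan is to exploit the coefficient of $w^{0}$. Let $\pi\colon\mathcal{T}'[[w]]\rightarrow\mathcal{T}'$ denote the $\mathbf{k}$-algebra homomorphism that extracts this constant coefficient (equivalently, the evaluation $w\mapsto 0$). It is clearly a $\mathbf{k}$-algebra homomorphism (and in fact continuous, though we don't need that).

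The composition $\pi\circ E\colon \mathcal{T}'\rightarrow\mathcal{T}'$ is then also a $\mathbf{k}$-algebra homomorphism. I would compute its effect on each generator $t_{i}$: setting $w=0$ in the defining expression $-\dfrac{t_{i}+\beta+\alpha w}{1-t_{i}w}$ yields simply $-(t_{i}+\beta)$. Hence $\pi\circ E$ is the $\mathbf{k}$-algebra endomorphism $\varphi\colon \mathcal{T}'\rightarrow\mathcal{T}'$ defined by $\varphi(t_{i})=-t_{i}-\beta$ for every $i\in[n-1]$.

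Now $\varphi$ is an involution, hence an automorphism: applying it twice, we get $\varphi(\varphi(t_{i}))=\varphi(-t_{i}-\beta)=-(-t_{i}-\beta)-\beta=t_{i}$, so $\varphi^{2}=\mathrm{id}_{\mathcal{T}'}$. In particular, $\varphi$ is injective. Since $\pi\circ E=\varphi$ is injective, $E$ itself must be injective, which is the claim. The ``main obstacle'' is really just noticing that the constant term in $w$ already suffices; the only input beyond that is the trivial observation that the affine substitution $t_{i}\mapsto -t_{i}-\beta$ is its own inverse on the polynomial ring $\mathbf{k}[t_{1},\ldots,t_{n-1}]$.
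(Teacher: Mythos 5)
Your argument is correct and is essentially the paper's own proof: the paper likewise composes $E$ with the constant-term-in-$w$ map and observes that the result is the substitution $t_{i}\mapsto -t_{i}-\beta$, which it inverts by composing with that same substitution again (your observation that this map is an involution). The only cosmetic difference is that the paper packages this as an explicit left inverse $G\circ F$ of $E$, whereas you conclude directly from the injectivity of $\pi\circ E$.
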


\begin{proof}Let $F\colon \mathcal{T}^{\prime} [ [w] ] \rightarrow\mathcal{T}^{\prime}$ be the $\mathcal{T}^{\prime}$-algebra homomorphism that sends each formal power
series $f\in\mathcal{T}^{\prime}[[w]] $ (regarded as a~formal power series in the single indeterminate~$w$ over $\mathcal{T} ^{\prime}$) to its constant term $f ( 0 ) \in\mathcal{T}^{\prime}$. Thus, $F$ is a~$\mathbf{k}$-algebra homomorphism, and it sends~$w$ to~$0$ while sending each element of $\mathcal{T}^{\prime}$ to itself.

Let $G\colon \mathcal{T}^{\prime}\rightarrow\mathcal{T}^{\prime}$ be the $\mathbf{k}$-algebra homomorphism that sends $t_{i}$ to $-t_{i}-\beta$ for each $i\in[n-1] $. (This is well-defined by the universal property of the polynomial ring $\mathcal{T}^{\prime}=\mathbf{k} [ t_{1},t_{2},\ldots,t_{n-1} ] $.)

Notice that the map $G\circ F\circ E\colon \mathcal{T}^{\prime}\rightarrow \mathcal{T}^{\prime}$ is a $\mathbf{k}$-algebra homomorphism (since it is the composition of the three $\mathbf{k}$-algebra homomorphisms~$E$,~$F$,~$G$).

For each $i\in[n-1] $, we have
\begin{gather*}
 ( F\circ E ) (t_{i}) =F\left( \underbrace{E(t_{i}) }_{=-\frac{t_{i}+\beta+\alpha w}{1-t_{i}
w}}\right) =F\left( -\frac{t_{i}+\beta+\alpha w}{1-t_{i}w}\right)
=-\frac{t_{i}+\beta+\alpha F(w) }{1-t_{i}F(w) }\\
\hphantom{( F\circ E ) (t_{i}) }{} \qquad ( \text{since }F\text{ is a }\mathcal{T}^{\prime}\text{-algebra homomorphism} ) \\
\hphantom{( F\circ E ) (t_{i}) }{} =-\frac{t_{i}+\beta+\alpha0}{1-t_{i}0}\qquad (\text{since }F(w) =0) \\
\hphantom{( F\circ E ) (t_{i}) }{} =- ( t_{i}+\beta )
\end{gather*}
and thus
\begin{gather*}
 ( G\circ F\circ E ) (t_{i}) =G\left(\underbrace{ ( F\circ E ) (t_{i}) }_{=- (t_{i}+\beta ) }\right) =G ( - ( t_{i}+\beta ) ) =-\left( \underbrace{G(t_{i}) }_{=-t_{i}-\beta}+\beta\right) \\
\hphantom{( G\circ F\circ E ) (t_{i})}{} \qquad ( \text{since }G\text{ is a }\mathbf{k}\text{-algebra homomorphism} ) \\
\hphantom{( G\circ F\circ E ) (t_{i})}{} =- (-t_{i}-\beta+\beta ) =t_{i}=\operatorname*{id} ( t_{i}) .
\end{gather*}
Hence, the two $\mathbf{k}$-algebra homomorphisms $G\circ F\circ E\colon \mathcal{T}^{\prime}\rightarrow\mathcal{T}^{\prime}$ and $\operatorname*{id}\colon \mathcal{T}^{\prime}\rightarrow\mathcal{T}^{\prime}$ agree on the generating set $\{ t_{1},t_{2},\ldots,t_{n-1}\} $ of the $\mathbf{k}$-algebra $\mathcal{T}^{\prime}$. Thus, these two homomorphisms must be identical. In other words, $G\circ F\circ E=\operatorname*{id}$. Hence, the map~$E$ has a left inverse, and thus is injective. This proves Proposition~\ref{prop.E.inj}.
\end{proof}

Thus, we have defined the following spaces and maps between them:
\begin{gather*}
\xymatrix{
\calX\ar[r]^{A} \ar[dr]_{D} & \calQ\ar[r]^-{B} & \calT\left[\left
[w\right]\right] \\
& \calT^\prime\ar[r]_-{E} & \calT^\prime\left[\left[w\right]\right] \arinj[u]
}
\end{gather*}
(but this is \textit{not} a commutative diagram). It is worth reminding ourselves that $A$, $D$ and $E$ are $\mathbf{k}$-algebra homomorphisms, but~$B$ (in general) is not.

\subsection[Pathless monomials and subsets $S$ of $\protect{[n-1]}$]{Pathless monomials and subsets $\boldsymbol{S}$ of $\boldsymbol{[n-1]}$}

Next, we want to study the action of the compositions $B\circ A$ and $E\circ D$ on pathless monomials. We first introduce some more notations:

\begin{Definition}\label{def.Ssubset}Let $S$ be a subset of $[n-1] $.
\begin{enumerate}\itemsep=0pt
\item[(a)] Let $\mathfrak{P}_{S}$ be the set of all pairs $(i,j) \in S\times( [n] \setminus S) $ satisfying $i<j$.

\item[(b)] A monomial $\mathfrak{m}\in\mathfrak{M}$ is said to be $S$\textit{-friendly} if it is a product of some of the indetermina\-tes~$x_{i,j}$ with $(i,j) \in\mathfrak{P}_{S}$. In other words, a~monomial $\mathfrak{m}\in\mathfrak{M}$ is $S$-friendly if and only if every indeterminate $x_{i,j}$ that appears in~$\mathfrak{m}$ satisfies $i\in S$ and~$j\notin S$.

We let $\mathfrak{M}_{S}$ denote the set of all $S$-friendly monomials.

\item[(c)] We let $\mathcal{X}_{S}$ denote the polynomial ring $\mathbf{k} [ x_{i,j}\,|\, (i,j) \in\mathfrak{P}_{S} ] $. This is clearly a subring of $\mathcal{X}$. The $\mathbf{k}$-module $\mathcal{X}_{S}$ has a basis consisting of all $S$-friendly monomials $\mathfrak{m}\in\mathfrak{M}$.

\item[(d)] An $n$-tuple $ ( a_{1},a_{2},\ldots,a_{n} ) \in\mathbb{Z}^{n}$ is said to be $S$\textit{-adequate} if and only if it satisfies \smash{$( a_{i}\geq0$} $\text{for all }i\in S ) $ and $ (a_{i}\leq0\text{ for all }i\in[n] \setminus S) $. We let $\mathcal{Q}_{S}$ denote the subset of $\mathcal{Q}$ consisting of all infinite $\mathbf{k}$-linear combinations of the Laurent monomials $q_{1}^{a_{1}}q_{2}^{a_{2}}\cdots q_{n}^{a_{n}}$ for $S$-adequate $n$-tuples $ ( a_{1},a_{2},\ldots,a_{n} ) \in\mathbb{Z}^{n}$ (as long as these combinations belong to $\mathcal{Q}$). It is easy to see that $\mathcal{Q}_{S}$ is a topological $\mathbf{k}$-subalgebra of $\mathcal{Q}$ (since the entrywise sum of two $S$-adequate $n$-tuples is $S$-adequate again).
\end{enumerate}
(At this point, it is helpful to recall once again that the $q_{1},q_{2},\ldots,q_{n}$ are not indeterminates, but rather monomials defined by $q_{i}=r_{i}r_{i+1}\cdots r_{n}$. But their products $q_{1}^{a_{1}}q_{2}^{a_{2}}\cdots q_{n}^{a_{n}}$ are Laurent monomials. Explicitly, they can be rewritten as products of the $r_{1},r_{2},\ldots,r_{n}$ using~(\ref{eq.Q'.reindex}). Thus, it is easy to see that the elements of~$\mathcal{Q}_{S}$ are the infinite $\mathbf{k}$-linear combinations of the Laurent monomials $r_{1}^{b_{1}}r_{2}^{b_{2}}\cdots r_{n}^{b_{n}}$ for all $( b_{1},b_{2},\ldots,b_{n}) \in\mathbb{Z}^{n}$ satisfying
$( b_{i}\geq b_{i-1}\text{ for all }i\in S) $ and $(b_{i}\leq b_{i-1}\text{ for all }i\in[n] \setminus S) $, where we set $b_{0}=0$, as long as these combinations belong to $\mathcal{Q}$.
But we won't need this characterization.)
\begin{enumerate}\itemsep=0pt
\item[(e)] We let $\mathcal{T}_{S}$ denote the topological $\mathbf{k}$-algebra $\mathbf{k} [ [ t_{i}\,|\, i\in S ] ] $. This is a topological subalgebra of~$\mathcal{T}$. Hence, the ring $\mathcal{T}_{S}[[w]] $ (that is, the ring of formal power series in the (single) variable~$w$ over $\mathcal{T}_{S}$) is a~topological $\mathbf{k}$-subalgebra of the similarly-defined ring~$\mathcal{T}[[w]] $.
\item[(f)] We define a $\mathbf{k}$-algebra homomorphism $A_{S}\colon \mathcal{X}_{S}\rightarrow\mathcal{Q}_{S}$ by
\begin{gather*}
A_{S} ( x_{i,j} ) =-\frac{q_{i}+\beta+\alpha/q_{j}}{1-q_{i}/q_{j}}\qquad \text{for all }(i,j) \in\mathfrak{P}_{S}.
\end{gather*}
This is well-defined, because for each $(i,j) \in\mathfrak{P}_{S}$, the power series $-\frac{q_{i}+\beta+\alpha/q_{j}}{1-q_{i}/q_{j}}$ does indeed belong to $\mathcal{Q}_{S}$ (indeed, a look at the monomials reveals that both series $-( q_{i}+\beta+\alpha/q_{j}) $ and $\frac{1}{1-q_{i}/q_{j}}=\sum_{k\geq0}( q_{i}/q_{j}) ^{k}$ belong to $\mathcal{Q}_{S}$, and therefore so does their product, which is $-\frac{q_{i}+\beta+\alpha/q_{j}}{1-q_{i}/q_{j}}$).

\item[(g)] We define a continuous $\mathbf{k}$-linear map $B_{S}\colon \mathcal{Q}_{S}\rightarrow\mathcal{T}_{S}[[w]] $ by setting
\begin{gather*}
B_{S}\big( q_{1}^{a_{1}}q_{2}^{a_{2}}\cdots q_{n}^{a_{n}}\big) =\left( \prod_{i\in S}t_{i}^{a_{i}}\right) \left( \prod_{i\in[n] \setminus S}w^{-a_{i}}\right) \\
\hphantom{B_{S}\big( q_{1}^{a_{1}}q_{2}^{a_{2}}\cdots q_{n}^{a_{n}}\big) =}{} \text{for each }S\text{-adequate } ( a_{1},a_{2},\ldots,a_{n}) \in\mathbb{Z}^{n}.
\end{gather*}
This is well-defined, as we will see below (in Proposition~\ref{prop.pathless.BSwd}(b)).
\item[(h)] We let $\mathcal{T}_{S}^{\prime}$ denote the $\mathbf{k}$-algebra $\mathbf{k} [ t_{i}\,|\, i\in S ] $. This is a $\mathbf{k}$-subalgebra of $\mathcal{T}^{\prime}$. Hence, the ring $\mathcal{T}_{S}^{\prime}[[w]] $ (that is, the ring of formal power series in the (single) variable~$w$ over $\mathcal{T}_{S}^{\prime}$) is a~$\mathbf{k}$-subalgebra of the similarly-defined ring~$\mathcal{T}^{\prime}[[w]] $.
\item[(i)] We define a $\mathbf{k}$-algebra homomorphism $D_{S}\colon \mathcal{X}_{S}\rightarrow\mathcal{T}_{S}^{\prime}$ by
\begin{gather*}
D_{S}(x_{i,j}) =t_{i}\qquad \text{for all} \quad (i,j) \in\mathfrak{P}_{S}.
\end{gather*}
This is well-defined, since each $(i,j) \in\mathfrak{P}_{S}$ satisfies $i\in S$.
\item[(j)] We define a $\mathbf{k}$-algebra homomorphism $E_{S}\colon \mathcal{T}_{S}^{\prime}\rightarrow\mathcal{T}_{S}^{\prime} [ [w]] $ by
\begin{gather*}
E_{S}(t_{i}) =-\frac{t_{i}+\beta+\alpha w}{1-t_{i}w}\qquad \text{for each }i\in S.
\end{gather*}
This is well-defined (by the universal property of the polynomial ring $\mathcal{T}_{S}^{\prime}$), because for each $i\in S$, the power series $1-t_{i}w$ is invertible in $\mathcal{T}_{S}^{\prime} [ [w]] $ (indeed, its constant term is~$1$).
\end{enumerate}
\end{Definition}

\begin{Proposition}\label{prop.pathless.BSwd}Let $S$ be a subset of $[n-1] $.
\begin{enumerate}\itemsep=0pt
\item[{\rm (a)}] We have
\begin{gather}
B\big( q_{1}^{a_{1}}q_{2}^{a_{2}}\cdots q_{n}^{a_{n}}\big) =\left(\prod_{i\in S}t_{i}^{a_{i}}\right) \left( \prod_{i\in[n]\setminus S}w^{-a_{i}}\right) \label{eq.prop.pathless.BSwd.a.eq}
\end{gather}
for each $S$-adequate $n$-tuple $ ( a_{1},a_{2},\ldots,a_{n} ) \in\mathbb{Z}^{n}$.

\item[{\rm (b)}] The map $B_{S}$ $($defined in Definition~{\rm \ref{def.Ssubset}(g))} is well-defined.
\end{enumerate}
\end{Proposition}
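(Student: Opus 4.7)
Part (a) is a direct unpacking of the definition of $B$, and part (b) will be reduced to part (a) together with the already-established well-definedness of $B$.

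For part (a), I would fix an $S$-adequate tuple $(a_1,a_2,\ldots,a_n) \in \mathbb{Z}^n$ and apply the formula defining $B$ on the topological basis of $\mathcal{Q}^{\pm}$. The key observation is that $S$-adequacy forces a strict dichotomy: if $a_i > 0$ then necessarily $i \in S$ (because otherwise $a_i \le 0$ would have to hold), and if $a_i < 0$ then necessarily $i \in [n]\setminus S$. Hence the set $\{i \in [n] : a_i > 0\}$ is contained in $S$, and differs from $S$ only by those $i \in S$ with $a_i = 0$; these contribute trivial factors $t_i^0 = 1$. Thus the product $\prod_{i \in [n],\ a_i > 0} t_i^{a_i}$ equals $\prod_{i \in S} t_i^{a_i}$, and a symmetric argument handles the $w$-product, yielding the formula (\ref{eq.prop.pathless.BSwd.a.eq}).

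For part (b), my plan is to realize $B_S$ as the restriction of $B$ to the topological $\mathbf{k}$-submodule $\mathcal{Q}_S \subseteq \mathcal{Q}$, with codomain narrowed to $\mathcal{T}_S[[w]]$. The prescribed values of $B_S$ on the generators $q_1^{a_1}\cdots q_n^{a_n}$ (for $S$-adequate tuples) agree with the values of $B$ on those same generators, by part (a). Since part (a) shows that each such image lies in $\mathcal{T}_S[[w]]$, and since the $S$-adequate Laurent monomials form a topological basis of $\mathcal{Q}_S$, a candidate continuous $\mathbf{k}$-linear map $B_S$ exists provided that convergence holds: for every infinite $\mathbf{k}$-linear combination $\sum \lambda_{(a_1,\ldots,a_n)} q_1^{a_1}\cdots q_n^{a_n}$ belonging to $\mathcal{Q}_S$, the corresponding sum of images must converge in $\mathcal{T}_S[[w]]$.

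The only real content here is this convergence, and it follows immediately from Lemma~\ref{lem.B.wd1}: restricting to $S$-adequate tuples only shrinks the index set, so the ``finitely many preimages per monomial'' statement of Lemma~\ref{lem.B.wd1} continues to hold a fortiori. Consequently the restriction of $B$ to $\mathcal{Q}_S$ is a well-defined continuous $\mathbf{k}$-linear map, and by part (a) it takes values in $\mathcal{T}_S[[w]]$; this restriction is precisely $B_S$. The main (and only mild) obstacle is keeping the bookkeeping straight so that one genuinely recognizes $B_S$ as this corestriction, rather than having to reprove convergence from scratch; part (a) is what makes this identification possible.
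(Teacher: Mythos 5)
Your proposal is correct and follows essentially the same route as the paper: part (a) by unpacking the definition of $B$ and using the dichotomy forced by $S$-adequacy (with the $a_i=0$ indices contributing trivial factors), and part (b) by recognizing $B_S$ as the corestriction of $B|_{\mathcal{Q}_S}$ to $\mathcal{T}_S[[w]]$, with part (a) guaranteeing that the images land in $\mathcal{T}_S[[w]]$ and the convergence inherited from the already-established well-definedness of $B$.
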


\begin{proof}(a) Let $(a_{1},a_{2},\ldots,a_{n}) \in\mathbb{Z}^{n}$ be an $S$-adequate $n$-tuple. We must prove~(\ref{eq.prop.pathless.BSwd.a.eq}).

The $n$-tuple $ ( a_{1},a_{2},\ldots,a_{n} ) $ is $S$-adequate. Thus, $ ( a_{i}\geq0\text{ for all }i\in S ) $ and $ (a_{i}\leq0\text{ for all }i\in[n] \setminus S ) $. Hence, each $i\in[n] $ satisfying $a_{i}>0$ must belong to $S$. Also, each $i\in[n] $ satisfying $a_{i}<0$ must belong to $ [n ] \setminus S$ (since $a_{i}\geq0$ for all $i\in S$).

Now, the definition of the map $B$ yields
\begin{gather*}
B\big( q_{1}^{a_{1}}q_{2}^{a_{2}}\cdots q_{n}^{a_{n}}\big)
=\underbrace{\left( \prod_{\substack{i\in[n] ;\\a_{i}>0}}t_{i}^{a_{i}}\right) }_{\substack{=\prod_{\substack{i\in S;\\a_{i}>0}
}t_{i}^{a_{i}}\\\text{(since each }i\in[n] \\\text{satisfying
}a_{i}>0\\\text{must belong to }S\text{)}}}\underbrace{\left( \prod
_{\substack{i\in[n] ;\\a_{i}<0}}w^{-a_{i}}\right)
}_{\substack{=\prod_{\substack{i\in[n] \setminus S;\\a_{i}
<0}}w^{-a_{i}}\\\text{(since each }i\in[n] \\\text{satisfying
}a_{i}<0\\\text{must belong to }[n] \setminus S\text{)}}}=\left( \prod_{\substack{i\in S;\\a_{i}>0}}t_{i}^{a_{i}}\right) \left(\prod_{\substack{i\in[n] \setminus S;\\a_{i}<0}}w^{-a_{i}}\right) .
\end{gather*}
Comparing this with
\begin{gather*}
 \underbrace{\left( \prod_{i\in S}t_{i}^{a_{i}}\right) }%
_{\substack{=\left( \prod_{\substack{i\in S;\\a_{i}=0}}t_{i}^{a_{i}}\right)
\left( \prod_{\substack{i\in S;\\a_{i}>0}}t_{i}^{a_{i}}\right)
\\\text{(since each }i\in S\text{ satisfies either }a_{i}=0\text{ or }%
a_{i}>0\\\text{(since }a_{i}\geq0\text{ for all }i\in S\text{))}%
}}\underbrace{\left( \prod_{i\in[n] \setminus S}w^{-a_{i}%
}\right) }_{\substack{=\left( \prod_{\substack{i\in[n]
\setminus S;\\a_{i}=0}}w^{-a_{i}}\right) \left( \prod_{\substack{i\in\left[
n\right] \setminus S;\\a_{i}<0}}w^{-a_{i}}\right) \\\text{(since each }%
i\in[n] \setminus S\text{ satisfies either }a_{i}=0\text{ or
}a_{i}<0\\\text{(since }a_{i}\leq0\text{ for all }i\in[n]
\setminus S\text{))}}}\\
\qquad{} =\left( \prod_{\substack{i\in S;\\a_{i}=0}}\underbrace{t_{i}^{a_{i}}%
}_{\substack{=1\\\text{(since }a_{i}=0\text{)}}}\right) \left(
\prod_{\substack{i\in S;\\a_{i}>0}}t_{i}^{a_{i}}\right) \left(
\prod_{\substack{i\in[n] \setminus S;\\a_{i}=0}%
}\underbrace{w^{-a_{i}}}_{\substack{=1\\\text{(since }a_{i}=0\text{)}%
}}\right) \left( \prod_{\substack{i\in[n] \setminus
S;\\a_{i}<0}}w^{-a_{i}}\right) \\
\qquad{} =\left( \prod_{\substack{i\in S;\\a_{i}>0}}t_{i}^{a_{i}}\right) \left(
\prod_{\substack{i\in[n] \setminus S;\\a_{i}<0}}w^{-a_{i}}\right) ,
\end{gather*}
we obtain $B\big( q_{1}^{a_{1}}q_{2}^{a_{2}}\cdots q_{n}^{a_{n}}\big) =\left( \prod_{i\in S}t_{i}^{a_{i}}\right) \left( \prod_{i\in[n] \setminus S}w^{-a_{i}}\right) $. This proves Proposition~\ref{prop.pathless.BSwd}(a).

(b) We must show that there exists a unique continuous $\mathbf{k}$-linear map $B_{S}\colon \mathcal{Q}_{S}\rightarrow\mathcal{T}_{S}[[w]] $ satisfying%
\begin{gather}
B_{S}\big( q_{1}^{a_{1}}q_{2}^{a_{2}}\cdots q_{n}^{a_{n}}\big) =\left(\prod_{i\in S}t_{i}^{a_{i}}\right) \left( \prod_{i\in[n]\setminus S}w^{-a_{i}}\right) \label{pf.prop.pathless.BSwd.b.cond}\\
\hphantom{B_{S}\big( q_{1}^{a_{1}}q_{2}^{a_{2}}\cdots q_{n}^{a_{n}}\big) =}{} \text{for each }S\text{-adequate } ( a_{1},a_{2},\ldots,a_{n} ) \in\mathbb{Z}^{n}.\nonumber
\end{gather}
The uniqueness of such a map is clear (because the elements of $\mathcal{Q}_{S}$ are infinite $\mathbf{k}$-linear combinations of the Laurent monomials $q_{1}^{a_{1}}q_{2}^{a_{2}}\cdots q_{n}^{a_{n}}$ for $S$-adequate $n$-tuples $( a_{1},a_{2},\ldots,a_{n}) \in\mathbb{Z}^{n}$; but the formula~(\ref{pf.prop.pathless.BSwd.b.cond}) uniquely determines the value of $B_{S}$ on such a $\mathbf{k}$-linear combination). Thus, it remains to prove its existence.

For each $f\in\mathcal{Q}_{S}$, we have $B(f) \in\mathcal{T}_{S}[[w]] $.\footnote{\textbf{Proof.} Let $f\in\mathcal{Q}_{S}$. We must show that $B(f) \in \mathcal{T}_{S}[[w]] $. Since the map $B$ is $\mathbf{k}$-linear and continuous, we can WLOG assume that $f$ is a Laurent monomial of the form $q_{1}^{a_{1}}q_{2}^{a_{2}}\cdots q_{n}^{a_{n}}$ for some $S$-adequate $n$-tuple $\left( a_{1},a_{2},\ldots,a_{n}\right) \in \mathbb{Z}^{n}$ (because $f$ is always an infinite $\mathbf{k}$-linear combination of such Laurent monomials). Assume this. Consider this $ (a_{1},a_{2},\ldots,a_{n}) \in\mathbb{Z}^{n}$.

Thus, $f=q_{1}^{a_{1}}q_{2}^{a_{2}}\cdots q_{n}^{a_{n}}$. Applying the map $B$ to both sides of this equality, we obtain
\begin{gather*}
B(f) =B\big( q_{1}^{a_{1}}q_{2}^{a_{2}}\cdots q_{n}^{a_{n}}\big) =\left( \prod_{i\in S}t_{i}^{a_{i}}\right) \left(\prod_{i\in[n] \setminus S}w^{-a_{i}}\right) \qquad ( \text{by Proposition \ref{prop.pathless.BSwd}(a)}) \\
\hphantom{B(f) =}{} \in\mathcal{T}_{S}[[w]] .
\end{gather*}
This is precisely what we wanted to show.} Hence, we can define a map $\widetilde{B_{S}}\colon \mathcal{Q}_{S}\rightarrow\mathcal{T}_{S} [ [w]] $ by
\begin{gather*}
\widetilde{B_{S}}(f) =B(f) \qquad \text{for each }f\in\mathcal{Q}_{S}.
\end{gather*}
This map $\widetilde{B_{S}}$ is a restriction of the map $B$; hence, it is a~continuous $\mathbf{k}$-linear map (since $B$ is a~continuous $\mathbf{k}$-linear map). Furthermore, it satisfies
\begin{gather*}
\widetilde{B_{S}}\big( q_{1}^{a_{1}}q_{2}^{a_{2}}\cdots q_{n}^{a_{n}}\big) =B\big( q_{1}^{a_{1}}q_{2}^{a_{2}}\cdots q_{n}^{a_{n}}\big)\qquad ( \text{by the definition of }\widetilde{B_{S}}) \\
\hphantom{\widetilde{B_{S}}\big( q_{1}^{a_{1}}q_{2}^{a_{2}}\cdots q_{n}^{a_{n}}\big)}{} =\left( \prod_{i\in S}t_{i}^{a_{i}}\right) \left( \prod_{i\in [n] \setminus S}w^{-a_{i}}\right) \qquad ( \text{by Proposition \ref{prop.pathless.BSwd}(a)})
\end{gather*}
for each $S$-adequate $ ( a_{1},a_{2},\ldots,a_{n} ) \in \mathbb{Z}^{n}$. Hence, $\widetilde{B_{S}}$ is a continuous $\mathbf{k}$-linear map $B_{S}\colon \mathcal{Q}_{S}\rightarrow\mathcal{T}_{S} [ [w]] $ satisfying~(\ref{pf.prop.pathless.BSwd.b.cond}). Thus, the existence of such a map $B_{S}$ is proven. As we have explained, this completes the proof of Proposition~\ref{prop.pathless.BSwd}(b).
\end{proof}

\begin{Proposition}\label{prop.pathless.cd}Let $S$ be a subset of $[n-1] $. Then,
the diagrams%
\begin{gather}\begin{split} &
\xymatrix{
\calX_S \arinj[d] \ar[r]^{A_S} & \calQ_S \arinj[d] \ar[r]^-{B_S}
& \calT_S\left[\left[w\right]\right] \arinj[d] \\
\calX\ar[r]_{A} & \calQ\ar[r]_-{B} & \calT\left[\left[w\right]\right]
}\end{split}
\label{eq.prop.pathless.cd.1}
\end{gather}
and%
\begin{gather}\begin{split}&
\xymatrix{
\calX_S \arinj[d] \ar[r]^{D_S} & \calT^\prime_S \arinj[d] \ar[r]^-{E_S}
& \calT^\prime_S\left[\left[w\right]\right] \arinj[d] \\
\calX\ar[r]_{D} & \calT^\prime\ar[r]_-{E} & \calT^\prime\left[\left
[w\right]\right]
}\end{split}
\label{eq.prop.pathless.cd.2}
\end{gather}
$($where the vertical arrows are the obvious inclusion maps$)$ are commutative.
\end{Proposition}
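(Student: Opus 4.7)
The proof will be a routine verification using two standard tools: the universal property of polynomial rings (to identify two $\mathbf{k}$-algebra homomorphisms when they agree on generators) and the topological density of a topological basis (to identify two continuous $\mathbf{k}$-linear maps when they agree on such a basis). There is no real obstacle; the point is simply that the ``subscripted'' maps $A_S$, $B_S$, $D_S$, $E_S$ were defined by literally the same formulas as $A$, $B$, $D$, $E$, restricted to the relevant generators/basis elements. So the plan is to treat the four squares separately.

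For the left square of \eqref{eq.prop.pathless.cd.1}: both $A_S$ composed with the inclusion $\mathcal{Q}_S \hookrightarrow \mathcal{Q}$ and the inclusion $\mathcal{X}_S \hookrightarrow \mathcal{X}$ composed with $A$ are $\mathbf{k}$-algebra homomorphisms $\mathcal{X}_S \to \mathcal{Q}$. On each generator $x_{i,j}$ with $(i,j) \in \mathfrak{P}_S$, both compositions give $-(q_i + \beta + \alpha/q_j)/(1 - q_i/q_j)$, by the defining formulas of $A$ and $A_S$. By the universal property of the polynomial ring $\mathcal{X}_S = \mathbf{k}[x_{i,j} \mid (i,j) \in \mathfrak{P}_S]$, the two compositions are equal. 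The same argument handles the left square of \eqref{eq.prop.pathless.cd.2}: both compositions $\mathcal{X}_S \to \mathcal{T}'$ are $\mathbf{k}$-algebra homomorphisms sending each generator $x_{i,j}$ (with $(i,j) \in \mathfrak{P}_S$, so in particular $i \in S$) to $t_i \in \mathcal{T}'_S \subseteq \mathcal{T}'$.

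For the right square of \eqref{eq.prop.pathless.cd.1}: both $B_S$ followed by the inclusion $\mathcal{T}_S[[w]] \hookrightarrow \mathcal{T}[[w]]$ and the inclusion $\mathcal{Q}_S \hookrightarrow \mathcal{Q}$ followed by $B$ are continuous $\mathbf{k}$-linear maps $\mathcal{Q}_S \to \mathcal{T}[[w]]$. The $S$-adequate Laurent monomials $q_1^{a_1} q_2^{a_2} \cdots q_n^{a_n}$ form a topological basis of $\mathcal{Q}_S$, so it suffices to check agreement on these. On such a monomial, the $B \circ (\text{incl.})$ composition gives the right-hand side of \eqref{eq.prop.pathless.BSwd.a.eq} by Proposition \ref{prop.pathless.BSwd}(a), while the $(\text{incl.}) \circ B_S$ composition gives the same expression by the definition of $B_S$. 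So the two continuous $\mathbf{k}$-linear maps agree on a topological basis and hence on all of $\mathcal{Q}_S$.

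Finally, the right square of \eqref{eq.prop.pathless.cd.2} is handled in the same way as the left square of \eqref{eq.prop.pathless.cd.1}: both compositions $\mathcal{T}'_S \to \mathcal{T}'[[w]]$ are $\mathbf{k}$-algebra homomorphisms, and on each generator $t_i$ (with $i \in S$, hence $i \in [n-1]$) both send $t_i$ to $-(t_i + \beta + \alpha w)/(1 - t_i w)$ by the defining formulas of $E$ and $E_S$. The universal property of the polynomial ring $\mathcal{T}'_S = \mathbf{k}[t_i \mid i \in S]$ finishes the argument. This completes the proof of the proposition.
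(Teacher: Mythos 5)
Your proposal is correct and follows essentially the same route as the paper: the three ``algebraic'' squares are handled by comparing $\mathbf{k}$-algebra homomorphisms on generators via the universal property, and the right square of \eqref{eq.prop.pathless.cd.1} is handled by reducing, via continuity and linearity, to the $S$-adequate Laurent monomials $q_1^{a_1}\cdots q_n^{a_n}$ and invoking Proposition \ref{prop.pathless.BSwd}(a) together with the definition of $B_S$. No gaps.
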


\begin{proof}The commutativity of the left square of~(\ref{eq.prop.pathless.cd.1}) is obvious\footnote{``Obvious'' in the following sense: You want to prove that a~diagram of the form
\begin{gather*}
\xymatrix{
\mathcal{A}_1 \ar[r]^{f_1} \ar[d]_{f_2} & \mathcal{A}_2 \ar[d]^{f_3} \\
\mathcal{A}_3 \ar[r]_{f_4} & \mathcal{A}_4
}
\end{gather*}
is commutative, where $\mathcal{A}_{1}$, $\mathcal{A}_{2}$, $\mathcal{A}_{3}$, $\mathcal{A}_{4}$ are four $\mathbf{k}$-algebras and $f_{1}$, $f_{2}$, $f_{3}$, $f_{4}$ are four $\mathbf{k}$-algebra homomorphisms. (In our concrete case, $\mathcal{A}_{1}=\mathcal{X}_{S}$, $\mathcal{A}_{2}=\mathcal{Q}_{S}$, $\mathcal{A}_{3}=\mathcal{X}$, $\mathcal{A}_{4}=\mathcal{Q}$, $f_{1}=A_{S}$ and $f_{4}=A$, whereas $f_{2}$ and $f_{3}$ are the inclusion maps $\mathcal{X}_{S}\rightarrow\mathcal{X}$ and $\mathcal{Q}_{S}\rightarrow \mathcal{Q}$.) In order to prove this commutativity, it suffices to show that it holds \textit{on a generating set} of the $\mathbf{k}$-algebra
$\mathcal{A}_{1}$. In other words, it suffices to pick some generating set $\mathfrak{G}$ of the $\mathbf{k}$-algebra~$\mathcal{A}_{1}$ and show that all $g\in\mathfrak{G}$ satisfy $( f_{3}\circ f_{1}) ( g)= ( f_{4}\circ f_{2} ) ( g ) $. (In our concrete case, it is most reasonable to pick $\mathfrak{G}= \{ x_{i,j}\,|\, (i,j ) \in\mathfrak{P}_{S} \} $. The proof then becomes completely clear.)}. So is the commutativity of each of the two squares of~(\ref{eq.prop.pathless.cd.2})\footnote{For similar reasons.}. It thus remains to prove the commutativity of the right square of~(\ref{eq.prop.pathless.cd.1}). In other words, we must show that $B_{S}(p) =B(p) $ for each $p\in\mathcal{Q}_{S}$.

So fix $p\in\mathcal{Q}_{S}$. Since both maps $B_{S}$ and $B$ are continuous and $\mathbf{k}$-linear, we can WLOG assume that $p$ is a Laurent monomial of the form $q_{1}^{a_{1}}q_{2}^{a_{2}}\cdots q_{n}^{a_{n}}$ for an $S$-adequate $n$-tuple $( a_{1},a_{2},\ldots,a_{n}) \in\mathbb{Z}^{n}$ (since the elements of $\mathcal{Q}_{S}$ are infinite $\mathbf{k}$-linear combinations of Laurent monomials of this form). Assume this, and fix this
$ ( a_{1},a_{2},\ldots,a_{n} ) $.

From $p=q_{1}^{a_{1}}q_{2}^{a_{2}}\cdots q_{n}^{a_{n}}$, we obtain
\begin{gather*}
B(p) =B\big( q_{1}^{a_{1}}q_{2}^{a_{2}}\cdots q_{n}^{a_{n}}\big) =\left( \prod_{i\in S}t_{i}^{a_{i}}\right) \left( \prod _{i\in[n] \setminus S}w^{-a_{i}}\right)
\end{gather*}
(by Proposition \ref{prop.pathless.BSwd}(a)). Comparing this with
\begin{gather*}
B_{S}(p)=B_{S}\big( q_{1}^{a_{1}}q_{2}^{a_{2}}\cdots q_{n}^{a_{n}}\big) \qquad ( \text{since }p=q_{1}^{a_{1}}q_{2}^{a_{2}}\cdots q_{n}^{a_{n}}) \\
\hphantom{B_{S}(p)}{} =\left( \prod_{i\in S}t_{i}^{a_{i}}\right) \left( \prod_{i\in [n] \setminus S}w^{-a_{i}}\right) \qquad ( \text{by the definition of }B_{S}) ,
\end{gather*}
we obtain $B_{S}(p) =B(p) $. This proves the commutativity of the right square of~(\ref{eq.prop.pathless.cd.1}). The proof of Proposition~\ref{prop.pathless.cd} is thus complete.
\end{proof}

\begin{Proposition}\label{prop.pathless.BS-alg}Let $S$ be a subset of $[n-1] $. Then, $B_{S}\colon \mathcal{Q}_{S}\rightarrow\mathcal{T}_{S} [ [ w] ] $ is a continuous $\mathbf{k}$-algebra homomorphism.
\end{Proposition}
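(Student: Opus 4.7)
The strategy is to verify the two algebra axioms (unitality and multiplicativity), noting that continuity and $\mathbf{k}$-linearity are already provided by Proposition~\ref{prop.pathless.BSwd}(b). Unitality is immediate: the all-zero tuple $(0,0,\ldots,0)$ is $S$-adequate, so the defining formula gives $B_S(1) = B_S\big(q_1^0 q_2^0 \cdots q_n^0\big) = 1 \cdot 1 = 1$. The real content is multiplicativity.

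For multiplicativity, the plan is to reduce to a check on the topological basis. Since $\mathcal{Q}_S$ has $\big(q_1^{a_1} q_2^{a_2} \cdots q_n^{a_n}\big)$ (ranging over $S$-adequate $(a_1,\ldots,a_n) \in \mathbb{Z}^n$) as a topological basis, and since multiplication is separately continuous in both $\mathcal{Q}_S$ and $\mathcal{T}_S[[w]]$, it suffices to show
\begin{gather*}
B_S\big(q_1^{a_1} \cdots q_n^{a_n} \cdot q_1^{b_1} \cdots q_n^{b_n}\big) = B_S\big(q_1^{a_1} \cdots q_n^{a_n}\big) \cdot B_S\big(q_1^{b_1} \cdots q_n^{b_n}\big)
\end{gather*}
for all $S$-adequate pairs $(a_1,\ldots,a_n)$ and $(b_1,\ldots,b_n)$. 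The key observation (already used when $\mathcal{Q}_S$ was introduced) is that the componentwise sum $(a_i + b_i)_{i \in [n]}$ is again $S$-adequate: for $i \in S$ both summands are $\geq 0$, while for $i \in [n]\setminus S$ both are $\leq 0$. Hence $B_S$ can be evaluated on the product $q_1^{a_1+b_1} \cdots q_n^{a_n+b_n}$ using the defining formula.

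The sign conditions then make the algebra trivial. For $i \in S$, all of $a_i, b_i, a_i + b_i$ are nonnegative, so $t_i^{a_i+b_i} = t_i^{a_i} t_i^{b_i}$ in $\mathcal{T}_S$. For $i \in [n]\setminus S$, the exponents $-a_i, -b_i, -(a_i+b_i)$ are all nonnegative, so $w^{-(a_i+b_i)} = w^{-a_i} w^{-b_i}$ in $\mathcal{T}_S[[w]]$. Regrouping the products (using commutativity) yields exactly $B_S\big(q_1^{a_1} \cdots q_n^{a_n}\big) \cdot B_S\big(q_1^{b_1} \cdots q_n^{b_n}\big)$, as required.

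The only subtlety worth flagging is the passage from ``multiplicative on the topological basis'' to ``multiplicative on all of $\mathcal{Q}_S$''. This requires that, for fixed $g$, both maps $f \mapsto B_S(fg)$ and $f \mapsto B_S(f) B_S(g)$ be continuous and $\mathbf{k}$-linear in $f$ (and symmetrically in $g$), so that equality on a topological basis propagates via infinite $\mathbf{k}$-linear combinations. Continuity of $B_S$ is given, and continuity of multiplication in $\mathcal{Q}_S$ and $\mathcal{T}_S[[w]]$ is part of their being topological algebras; this is the only nonalgebraic point to verify, and it is routine. There is no serious obstacle here — the whole proposition is essentially forced by the compatibility of the sign conditions in the definition of ``$S$-adequate'' with the splitting $[n] = S \sqcup ([n]\setminus S)$.
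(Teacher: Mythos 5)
Your proposal is correct and follows essentially the same route as the paper, which likewise reduces the claim to checking $B_S(1)=1$ and multiplicativity on the topological basis of Laurent monomials $q_1^{a_1}\cdots q_n^{a_n}$ for $S$-adequate tuples (and leaves that verification, which hinges on the sum of two $S$-adequate tuples being $S$-adequate, to the reader). You have simply written out the details the paper omits, including the continuity point needed to pass from the basis to all of $\mathcal{Q}_S$.
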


\begin{proof}We merely need to show that $B_{S}$ is a $\mathbf{k}$-algebra homomorphism. To this purpose, by linearity, we only need to prove that $B_{S}(1) =1$ and $B_{S}( \mathfrak{mn}) =B_{S}( \mathfrak{m}) B_{S}(\mathfrak{n}) $ for any two Laurent monomials $\mathfrak{m}$ and $\mathfrak{n}$ of the form $q_{1}^{a_{1}}q_{2}^{a_{2}}\cdots q_{n}^{a_{n}}$ for $S$-adequate $n$-tuples $( a_{1},a_{2},\ldots,a_{n}) \in\mathbb{Z}^{n}$ (since the elements of $\mathcal{Q}_{S}$ are infinite $\mathbf{k}$-linear combinations of Laurent monomials of this form). This is easy and left to the reader.
\end{proof}

\begin{Proposition}\label{prop.pathless.AB}Let $S$ be a subset of $[n-1] $. Let $(i,j) \in\mathfrak{P}_{S}$. Then,
\begin{gather*}
 ( E_{S}\circ D_{S} ) (x_{i,j}) = ( B_{S}\circ A_{S} ) ( x_{i,j} ) .
\end{gather*}
\end{Proposition}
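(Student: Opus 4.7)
The plan is a direct symbolic computation: expand both sides as power series in the relevant auxiliary variable and check they agree term by term.

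First, on the $(E_S\circ D_S)$ side, apply the definitions to get $(E_S\circ D_S)(x_{i,j}) = E_S(t_i) = -\dfrac{t_i + \beta + \alpha w}{1 - t_i w}$, no computation needed.

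For the $(B_S\circ A_S)$ side, I would first use the invertibility of $1 - q_i/q_j$ in $\mathcal{Q}_S$ (the geometric-series expansion) to rewrite
\begin{gather*}
A_S(x_{i,j}) \;=\; -\bigl(q_i + \beta + \alpha/q_j\bigr)\sum_{k\ge 0} (q_i/q_j)^k \;=\; -\sum_{k\ge 0} q_i^{\,k+1} q_j^{-k} \;-\; \beta\sum_{k\ge 0} q_i^{\,k} q_j^{-k} \;-\; \alpha\sum_{k\ge 0} q_i^{\,k} q_j^{-(k+1)}.
\end{gather*}
Since $(i,j)\in\mathfrak{P}_S$, we have $i\in S$ and $j\in[n]\setminus S$, so each Laurent monomial appearing above has the form $q_1^{a_1}\cdots q_n^{a_n}$ with $a_i\ge 0$, $a_j\le 0$ and $a_m=0$ for $m\notin\{i,j\}$; hence these are $S$-adequate, confirming each summand lies in $\mathcal{Q}_S$.

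Now I invoke continuity and $\mathbf{k}$-linearity of $B_S$ (from Definition~\ref{def.Ssubset}(g) / Proposition~\ref{prop.pathless.BSwd}(b)) and apply $B_S$ termwise. By the defining formula of $B_S$, each monomial $q_i^{\,a}q_j^{-b}$ (with $a,b\ge 0$) maps to $t_i^{\,a}w^{\,b}$. This gives
\begin{gather*}
(B_S\circ A_S)(x_{i,j}) \;=\; -\sum_{k\ge 0} t_i^{\,k+1} w^{k} \;-\; \beta\sum_{k\ge 0} t_i^{\,k} w^{k} \;-\; \alpha\sum_{k\ge 0} t_i^{\,k} w^{k+1} \;=\; -\frac{t_i + \beta + \alpha w}{1-t_i w},
\end{gather*}
where the last equality just reassembles the three geometric series using $\sum_{k\ge 0}(t_iw)^k = 1/(1-t_iw)$ in $\mathcal{T}_S[[w]]$. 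This matches the expression obtained for $(E_S\circ D_S)(x_{i,j})$, completing the proof.

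There is no real obstacle here: the only point needing care is that the termwise application of $B_S$ is legitimate, which is guaranteed by its continuity together with the observation that all intermediate monomials are $S$-adequate; the rest is bookkeeping of geometric series, deliberately mirrored between the $q_i/q_j$ expansion in $\mathcal{Q}_S$ and the $t_iw$ expansion in $\mathcal{T}_S[[w]]$ — this mirroring is precisely the reason the maps $A$, $B$, $D$, $E$ were set up the way they were.
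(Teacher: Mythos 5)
Your proof is correct, and it takes a mildly but genuinely different route from the paper's. The paper first establishes (Proposition~\ref{prop.pathless.BS-alg}) that $B_S$ is a continuous $\mathbf{k}$-\emph{algebra homomorphism}, notes $B_S(q_i)=t_i$ and $B_S\big(q_j^{-1}\big)=w$, and then applies $B_S$ to the fraction $-\frac{q_i+\beta+\alpha q_j^{-1}}{1-q_iq_j^{-1}}$ wholesale, using the fact that an algebra homomorphism respects quotients with invertible denominators. You instead expand $\frac{1}{1-q_i/q_j}$ as a geometric series, apply $B_S$ termwise using only its continuity and $\mathbf{k}$-linearity (which is all that its construction in Proposition~\ref{prop.pathless.BSwd}(b) directly provides), and reassemble the resulting geometric series in $t_iw$. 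The two computations are the same one at different levels of abstraction, but your version has the advantage of not invoking the multiplicativity of $B_S$ at all -- a fact the paper itself only asserts with proof ``left to the reader'' -- at the cost of having to verify explicitly that every Laurent monomial $q_i^{a}q_j^{-b}$ ($a,b\geq 0$) occurring in the expansion is $S$-adequate and maps to $t_i^{a}w^{b}$, which you do correctly. (Note that Proposition~\ref{prop.pathless.BS-alg} is still needed elsewhere, namely in the proof of Proposition~\ref{prop.pathless.S}(b), so your route does not eliminate it from the overall argument; it only makes this particular step independent of it.)
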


\begin{proof}From $(i,j) \in\mathfrak{P}_{S}$, we obtain $i\in S$ and $j\in[n] \setminus S$. Thus, the definition of $B_{S}$ reveals that $B_{S} ( q_{i} ) =t_{i}$ and $B_{S}\big( q_{j}^{-1}\big) =w$.

Proposition \ref{prop.pathless.BS-alg} shows that $B_{S}\colon \mathcal{Q}_{S}\rightarrow\mathcal{T}_{S}[[w]] $ is a~continuous $\mathbf{k}$-algebra homomorphism. But the definition of~$A_{S}$ yields
\begin{gather*}
A_{S}(x_{i,j}) =-\frac{q_{i}+\beta+\alpha/q_{j}}{1-q_{i}/q_{j}}=-\frac{q_{i}+\beta+\alpha q_{j}^{-1}}{1-q_{i}q_{j}^{-1}}.
\end{gather*}
Applying the map $B_{S}$ to both sides of this equality, we find
\begin{gather*}
B_{S} ( A_{S}(x_{i,j}) ) =B_{S}\left( -\frac{q_{i}+\beta+\alpha q_{j}^{-1}}{1-q_{i}q_{j}^{-1}}\right)=-\frac{B_{S} ( q_{i} ) +\beta+\alpha B_{S} \big( q_{j}
^{-1}\big) }{1-B_{S}(q_{i}) B_{S}\big( q_{j}^{-1}\big)}\\
\hphantom{B_{S} ( A_{S}(x_{i,j}) )}{} \qquad \left(
\begin{matrix}
\text{since }B_{S}\text{ is a }\mathbf{k}\text{-algebra homomorphism, and
thus}\\
\text{respects sums, products and fractions (as long as}\\
\text{the denominators of the fractions are invertible)}%
\end{matrix}
\right) \\
\hphantom{B_{S} ( A_{S}(x_{i,j}) )}{} =-\frac{t_{i}+\beta+\alpha w}{1-t_{i}w}\qquad \big(\text{since }B_{S}(q_{i}) =t_{i}\text{ and }B_{S}\big(q_{j}^{-1}\big) =w\big) .
\end{gather*}
Comparing this with
\begin{gather*}
( E_{S}\circ D_{S}) (x_{i,j}) =E_{S}\Big(\underbrace{D_{S}(x_{i,j}) }_{=t_{i}}\Big) =E_{S} (t_{i}) =-\frac{t_{i}+\beta+\alpha w}{1-t_{i}w},
\end{gather*}
we obtain $B_{S} ( A_{S}(x_{i,j}) ) = ( E_{S}\circ D_{S}) (x_{i,j}) $. Thus,
\begin{gather*}
( E_{S}\circ D_{S} ) (x_{i,j}) =B_{S} ( A_{S} ( x_{i,j} ) ) = ( B_{S}\circ A_{S} ) (x_{i,j} ) .
\end{gather*}
This proves Proposition \ref{prop.pathless.AB}.
\end{proof}

\begin{Proposition}\label{prop.pathless.S}Let $\mathfrak{m}\in\mathfrak{M}$ be a pathless monomial.
\begin{enumerate}\itemsep=0pt
\item[{\rm (a)}] There exists a subset $S$ of $[n-1] $ such that $\mathfrak{m}$ is $S$-friendly.
\item[{\rm (b)}] Let $S$ be such a subset. Then, $\mathfrak{m}\in\mathcal{X}_{S}$ and $(E\circ D) (\mathfrak{m}) = ( B_{S}\circ A_{S}) (\mathfrak{m}) $.
\end{enumerate}
\end{Proposition}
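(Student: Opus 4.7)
The plan is to handle parts (a) and (b) separately, with (a) being a purely combinatorial unpacking of the pathless condition and (b) being a homomorphism-agrees-on-generators argument combined with the commutative diagrams from Proposition~\ref{prop.pathless.cd}.

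For part~(a), I would introduce the two index sets $L = \{i \in [n-1] : x_{i,j} \text{ appears in } \mathfrak{m} \text{ for some } j\}$ and $R = \{j \in \{2,\ldots,n\} : x_{i,j} \text{ appears in } \mathfrak{m} \text{ for some } i\}$ and observe that the pathless condition on $\mathfrak{m}$ is precisely the statement $L \cap R = \emptyset$. Indeed, if some $j \in L \cap R$, then there exist $i < j$ and $k > j$ with $x_{i,j}$ and $x_{j,k}$ both dividing $\mathfrak{m}$, contradicting pathlessness. Having $L \cap R = \emptyset$, one may take $S := L$ (so $S \subseteq [n-1]$ automatically); then every $x_{i,j}$ occurring in $\mathfrak{m}$ satisfies $i \in S$ and $j \notin S$ (since $j \in R$ and $R \cap S = \emptyset$), which is exactly the definition of $S$-friendliness.

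For part~(b), given such an $S$, the assertion $\mathfrak{m} \in \mathcal{X}_S$ is immediate from the definition of $\mathcal{X}_S$ as the polynomial ring $\mathbf{k}[x_{i,j} \mid (i,j) \in \mathfrak{P}_S]$ once $\mathfrak{m}$ is $S$-friendly. For the equality $(E \circ D)(\mathfrak{m}) = (B_S \circ A_S)(\mathfrak{m})$, I would first show the ``small'' version $(E_S \circ D_S)(\mathfrak{m}) = (B_S \circ A_S)(\mathfrak{m})$. Both sides are values at $\mathfrak{m}$ of $\mathbf{k}$-algebra homomorphisms $\mathcal{X}_S \to \mathcal{T}'_S[[w]]$ (or into $\mathcal{T}_S[[w]]$, viewed through the canonical inclusion), namely $E_S \circ D_S$ and $B_S \circ A_S$; here $B_S$ is a $\mathbf{k}$-algebra homomorphism by Proposition~\ref{prop.pathless.BS-alg}, while $E_S$, $D_S$, $A_S$ are algebra homomorphisms by construction. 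These two homomorphisms agree on the generating set $\{x_{i,j} \mid (i,j) \in \mathfrak{P}_S\}$ of $\mathcal{X}_S$ by Proposition~\ref{prop.pathless.AB}, hence they coincide on all of $\mathcal{X}_S$, and in particular at $\mathfrak{m}$.

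Finally, I would upgrade this equation from the ``small'' setting to the ``large'' one by invoking the right square of the commutative diagram~(\ref{eq.prop.pathless.cd.2}): it tells us that $(E \circ D)(\mathfrak{m})$ is the image of $(E_S \circ D_S)(\mathfrak{m})$ under the canonical inclusion $\mathcal{T}'_S[[w]] \hookrightarrow \mathcal{T}'[[w]]$, so identifying these via the inclusion yields $(E \circ D)(\mathfrak{m}) = (E_S \circ D_S)(\mathfrak{m}) = (B_S \circ A_S)(\mathfrak{m})$, as required. No step presents a real obstacle; the only thing one has to be a little careful about is tracking in which space the equality lives (via the vertical inclusions in the diagrams of Proposition~\ref{prop.pathless.cd}), but this is bookkeeping rather than substance.
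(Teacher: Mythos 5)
Your proposal is correct and follows essentially the same route as the paper: in (a) your set $S=L$ is exactly the paper's choice (the set of first indices of indeterminates occurring in $\mathfrak{m}$), with the pathless condition used in the same contrapositive way, and in (b) you run the same agree-on-generators argument via Propositions~\ref{prop.pathless.BS-alg}, \ref{prop.pathless.AB} and the commutativity of~(\ref{eq.prop.pathless.cd.2}), merely reordering the steps (the paper first identifies $(E\circ D)|_{\mathcal{X}_S}$ with $E_S\circ D_S$ and then compares generators). The only nitpick is that relating $(E\circ D)(\mathfrak{m})$ to $(E_S\circ D_S)(\mathfrak{m})$ uses the full outer rectangle of~(\ref{eq.prop.pathless.cd.2}), not just its right square, but this is cosmetic.
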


\begin{proof} (a) Write $\mathfrak{m}$ in the form $\mathfrak{m}=\prod_{\substack{(i,j) \in [n] ^{2};\\i<j}}x_{i,j}^{a_{i,j}}$. For each $i\in[n-1] $, define a $b_{i}\in\mathbb{N}$ by $b_{i}=\sum_{j=i+1}^{n}a_{i,j}$. Define a subset $S$ of $[n-1] $ by $S= \{ i\in[n-1] \,|\, b_{i}>0 \} $. Then $\mathfrak{m}$ is $S$-friendly\footnote{{\bf Proof.} We need to show that every indeterminate $x_{i,j}$ that appears in $\mathfrak{m}$ satisfies $i\in S$ and $j\notin S$.

Indeed, assume the contrary. Thus, some indeterminate $x_{i,j}$ that appears in $\mathfrak{m}$ does \textit{not} satisfy $i\in S$ and $j\notin S$. Fix such an indeterminate $x_{i,j}$, and denote it by $x_{u,v}$. Thus, $x_{u,v}$ is an indeterminate that appears in $\mathfrak{m}$ but does \textit{not} satisfy $u\in S$ and $v\notin S$. Therefore, we have either $u\notin S$ or $v\in S$ (or both).

We have $1\leq u<v\leq n$ (since the indeterminate $x_{u,v}$ exists) and thus $u\in[n-1] $. The definition of $b_{u}$ yields $b_{u}=\sum_{j=u+1}^{n}a_{u,j}$. But $v\geq u+1$ (since $u<v$). Hence, $a_{u,v}$ is an addend of the sum $\sum_{j=u+1}^{n}a_{u,j}$. Hence, $\sum_{j=u+1}^{n}a_{u,j}\geq a_{u,v}$. But $a_{u,v}>0$ (since the indeterminate $x_{u,v}$ appears in $\mathfrak{m}$). Hence, $b_{u}=\sum_{j=u+1}^{n}a_{u,j}\geq a_{u,v}>0$. Therefore, $u\in S$ (by the definition of $S$). Hence, $u\notin S$ cannot hold. Therefore, $v\in S$ (since we know that we have either $u\notin S$ or $v\in S$). In other words, $v\in[n-1] $ and $b_{v}>0$ (by the definition of $S$). But the definition of $b_{v}$ yields $b_{v}=\sum_{j=v+1}^{n}a_{v,j}=\sum_{w=v+1}^{n}a_{v,w}$. Hence, $\sum_{w=v+1}^{n}a_{v,w}=b_{v}>0$. Hence, there exists some $w\in \{ v+1,v+2,\ldots,n \} $ such that $a_{v,w}>0$. Fix such a~$w$.

We have $v<w$ (since $w\in \{ v+1,v+2,\ldots,n \} $), hence $u<v<w$. Thus, $(u,v) \neq(v,w) $. Moreover, the indeterminate $x_{v,w}$ appears in $\mathfrak{m}$ (since $a_{v,w}>0$). Thus, both indeterminates $x_{u,v}$ and $x_{v,w}$ appear in $\mathfrak{m}$. Hence, $x_{u,v}x_{v,w}\,|\, \mathfrak{m}$ (since $(u,v) \neq ( v,w ) $).

But the monomial $\mathfrak{m}$ is pathless. In other words, there exists no triple $(i,j,k) \in[n] ^{3}$ satisfying $i<j<k$ and $x_{i,j}x_{j,k}\,|\, \mathfrak{m}$. This contradicts the fact that $(u,v,w) $ is such a triple (since $u<v<w$ and $x_{u,v}x_{v,w}\,|\, \mathfrak{m}$). This contradiction completes our proof.}. This proves Proposition \ref{prop.pathless.S}(a).

(b) We know that $\mathfrak{m}\in\mathcal{X}_{S}$ (since $\mathfrak{m}$ is $S$-friendly). Now, we shall show that $ ( E\circ D ) \,|\, _{\mathcal{X}_{S}}=B_{S}\circ A_{S}$ (if we regard $B_{S}\circ A_{S}$ as a map to $\mathcal{T}[[w]] $ and regard $E\circ D$ as a map to $\mathcal{T}[[w]] $).

The map $(E\circ D)|_{\mathcal{X}_{S}}$ is a $\mathbf{k}$-algebra homomorphism (since $D$ and $E$ are $\mathbf{k}$-algebra homomorphisms), and the map $B_{S}\circ A_{S}$ is a $\mathbf{k}$-algebra homomorphism (since both $B_{S}$ and $A_{S}$ are $\mathbf{k}$-algebra homomorphisms\footnote{Here we are using Proposition~\ref{prop.pathless.BS-alg}.}). Hence, we are trying to prove that two $\mathbf{k}$-algebra homomorphisms are equal (namely, the homomorphisms $(E\circ D)|_{\mathcal{X}_{S}}$ and $B_{S}\circ A_{S}$). It is clearly enough to prove this on the generating family $ (x_{i,j}) _{(i,j) \in\mathfrak{P}_{S}}$ of the $\mathbf{k}$-algebra $\mathcal{X}_{S}$. In other words, it is enough to prove that $ ( (E\circ D)|_{\mathcal{X}_{S}} ) ( x_{i,j}) = ( B_{S}\circ A_{S} ) (x_{i,j}) $ for each $(i,j) \in\mathfrak{P}_{S}$.

So let us fix some $(i,j) \in\mathfrak{P}_{S}$. Proposition~\ref{prop.pathless.cd} shows that the diagram~(\ref{eq.prop.pathless.cd.2}) is commutative. Thus, $(E\circ D)|_{\mathcal{X}_{S}}=E_{S}\circ D_{S}$ (provided that we regard $E_{S}\circ D_{S}$ as a map to $\mathcal{T}^{\prime}[[w]] $), and thus
\begin{gather*}
 ( (E\circ D)|_{\mathcal{X}_{S}} ) ( x_{i,j} ) = ( E_{S}\circ D_{S} ) (x_{i,j}) =( B_{S}\circ A_{S}) (x_{i,j})
\end{gather*}
(by Proposition \ref{prop.pathless.AB}).

This completes our proof of $(E\circ D) |_{\mathcal{X}_{S}}=B_{S}\circ A_{S}$.

Now, from $\mathfrak{m}\in\mathcal{X}_{S}$, we obtain $(E\circ D) (\mathfrak{m}) =\underbrace{ ((E\circ D) |_{\mathcal{X}_{S}}) }_{=B_{S}\circ A_{S}}(\mathfrak{m}) = ( B_{S}\circ A_{S} ) (\mathfrak{m}) $. This completes the proof of Proposition~\ref{prop.pathless.S}(b).
\end{proof}

\subsection[$(E\circ D) (q) =( B\circ A) (q) $ for pathless $q$]{$\boldsymbol{(E\circ D) (q) =( B\circ A) (q)}$ for pathless $\boldsymbol{q}$}

\begin{Corollary}\label{cor.pathless.D}Let $q\in\mathcal{X}$ be pathless. Then, $( E\circ D ) (q) = ( B\circ A ) (q) $.
\end{Corollary}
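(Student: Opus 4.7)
The plan is to reduce to the case of a single pathless monomial by $\mathbf{k}$-linearity, then apply Proposition~\ref{prop.pathless.S} together with the commutativity statements of Proposition~\ref{prop.pathless.cd}.

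First, I would observe that both $E \circ D$ and $B \circ A$ are $\mathbf{k}$-linear maps $\mathcal{X} \to \mathcal{T}[[w]]$ (where $E \circ D$ is interpreted via the inclusion $\mathcal{T}'[[w]] \hookrightarrow \mathcal{T}[[w]]$). Since $q$ is pathless, it is by definition a $\mathbf{k}$-linear combination of pathless monomials. Hence it suffices to prove $(E \circ D)(\mathfrak{m}) = (B \circ A)(\mathfrak{m})$ for every pathless monomial $\mathfrak{m} \in \mathfrak{M}$.

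Next, fix a pathless monomial $\mathfrak{m}$. By Proposition~\ref{prop.pathless.S}(a), there exists a subset $S \subseteq [n-1]$ such that $\mathfrak{m}$ is $S$-friendly; in particular $\mathfrak{m} \in \mathcal{X}_S$. Proposition~\ref{prop.pathless.S}(b) then gives
\begin{equation*}
(E \circ D)(\mathfrak{m}) = (B_S \circ A_S)(\mathfrak{m}).
\end{equation*}
It remains to identify the right-hand side with $(B \circ A)(\mathfrak{m})$. This is exactly what the commutativity of the diagram (\ref{eq.prop.pathless.cd.1}) delivers: chasing $\mathfrak{m}$ around the outer rectangle, the commutativity of the left square gives $A(\mathfrak{m}) = A_S(\mathfrak{m})$ inside $\mathcal{Q}$ (viewing $\mathcal{Q}_S \hookrightarrow \mathcal{Q}$), and the commutativity of the right square then gives $B(A_S(\mathfrak{m})) = B_S(A_S(\mathfrak{m}))$ inside $\mathcal{T}[[w]]$. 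Combining these two yields $(B \circ A)(\mathfrak{m}) = (B_S \circ A_S)(\mathfrak{m})$, and comparing with the displayed equation above completes the proof for $\mathfrak{m}$, hence for $q$ by linearity.

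There is no real obstacle here — the work has already been done in Propositions~\ref{prop.pathless.cd} and~\ref{prop.pathless.S}, and the Corollary is essentially a matter of assembling them. The only thing to be mildly careful about is that $B \circ A$ itself is not a $\mathbf{k}$-algebra homomorphism (because $B$ is not), so the reduction to monomials must be done via $\mathbf{k}$-linearity rather than via generators of an algebra; this is why the argument proceeds monomial by monomial and invokes the $S$-friendly decomposition rather than trying to verify the identity on the generators $x_{i,j}$ of $\mathcal{X}$ directly.
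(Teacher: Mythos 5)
Your proposal is correct and follows essentially the same route as the paper's own proof: reduce to a single pathless monomial by $\mathbf{k}$-linearity, invoke Proposition~\ref{prop.pathless.S} to get $(E\circ D)(\mathfrak{m})=(B_S\circ A_S)(\mathfrak{m})$, and use the commutativity of the diagram~(\ref{eq.prop.pathless.cd.1}) to identify $(B_S\circ A_S)(\mathfrak{m})$ with $(B\circ A)(\mathfrak{m})$. Your closing remark about why the reduction must go through monomials rather than algebra generators is exactly the right caveat.
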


\begin{proof}The polynomial $q$ is pathless, i.e., is a $\mathbf{k}$-linear combination of pathless monomials. Hence, we WLOG assume that $q$ is a pathless monomial $\mathfrak{m}$ (since both maps $E\circ D$ and $B\circ A$ are $\mathbf{k}$-linear). Consider this~$\mathfrak{m}$.

Proposition \ref{prop.pathless.S}(a) shows that there exists a subset $S$ of $[n-1] $ such that $\mathfrak{m}$ is $S$-friendly. Consider this~$S$.

Proposition \ref{prop.pathless.S}(b) yields $\mathfrak{m}\in\mathcal{X}_{S}$ and $(E\circ D) (\mathfrak{m}) = ( B_{S}\circ A_{S} ) (\mathfrak{m}) $. But the commutativity of the diagram~(\ref{eq.prop.pathless.cd.1}) in Proposition~\ref{prop.pathless.cd} shows that $B_{S}\circ A_{S}= ( B\circ A ) |_{\mathcal{X}_{S}}$ (provided that we regard $B_{S}\circ A_{S}$ as a map to~$\mathcal{T}[[w]] $). Hence,
\begin{gather*}
\underbrace{( B_{S}\circ A_{S}) }_{=( B\circ A)|_{\mathcal{X}_{S}}}(\mathfrak{m}) = (( B\circ A ) |_{\mathcal{X}_{S}} ) (\mathfrak{m}) = (B\circ A ) (\mathfrak{m}) .
\end{gather*}
Thus, $(E\circ D) (\mathfrak{m}) = ( B_{S}\circ A_{S}) (\mathfrak{m}) =( B\circ A ) (\mathfrak{m}) $. Since $q=\mathfrak{m}$, this rewrites as $(E\circ D) (q) = ( B\circ A ) (q) $. This proves Corollary~\ref{cor.pathless.D}.
\end{proof}

\subsection{Proof of Theorem~\ref{thm.t-red.unique}}

\begin{Lemma}\label{lem.t-red.unique.0}Let $p\in\mathcal{X}$ be a pathless polynomial such that $p\in\mathcal{J}$. Then, $D(p) =0$.
\end{Lemma}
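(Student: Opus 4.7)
The plan is to assemble the three main ingredients that have been established in the preceding subsections, since Lemma~\ref{lem.t-red.unique.0} is precisely the payoff that the whole diagram was designed to deliver. Indeed, the proof should be essentially a one-line chase around the non-commutative diagram involving $A$, $B$, $D$ and $E$, leveraging the fact that $E \circ D$ and $B \circ A$ agree on pathless inputs even though they disagree in general.

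More concretely, I would argue as follows. First, because $p \in \mathcal{J}$, Proposition~\ref{prop.A.killsJ} gives $A(p) = 0$, and applying the (continuous $\mathbf{k}$-linear) map $B$ yields $(B \circ A)(p) = 0$. Next, since $p$ is pathless, Corollary~\ref{cor.pathless.D} applies and tells us that $(E \circ D)(p) = (B \circ A)(p)$. Combining the two, we get $(E \circ D)(p) = 0$, i.e., $E(D(p)) = 0$ in $\mathcal{T}'[[w]]$. Finally, Proposition~\ref{prop.E.inj} says that $E$ is injective, and therefore $D(p) = 0$ in $\mathcal{T}'$, which is the conclusion.

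There is essentially no technical obstacle in this lemma: all the real work has been done already (the definition of $A$ designed so that $A(\mathcal{J}) = 0$, the combinatorial/topological construction of $B$ and its compatibility with $E \circ D$ on pathless monomials as encoded by Proposition~\ref{prop.pathless.S}, and the algebraic trick $G \circ F \circ E = \mathrm{id}$ that proves $E$ is injective). The only thing to be careful about in writing up the proof is to make the identification $E(D(p)) \in \mathcal{T}'[[w]] \subseteq \mathcal{T}[[w]]$ explicit when invoking Corollary~\ref{cor.pathless.D}, since that corollary naturally lives in $\mathcal{T}[[w]]$, whereas the injectivity of $E$ is stated as a map $\mathcal{T}' \to \mathcal{T}'[[w]]$; the vertical inclusion $\mathcal{T}'[[w]] \hookrightarrow \mathcal{T}[[w]]$ in the diagram handles this compatibility automatically.
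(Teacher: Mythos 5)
Your proposal is correct and follows exactly the same route as the paper's proof: $A(p)=0$ by Proposition~\ref{prop.A.killsJ}, hence $(B\circ A)(p)=0$; Corollary~\ref{cor.pathless.D} gives $(E\circ D)(p)=(B\circ A)(p)=0$; and the injectivity of $E$ (Proposition~\ref{prop.E.inj}) yields $D(p)=0$. Your remark about the inclusion $\mathcal{T}'[[w]]\hookrightarrow\mathcal{T}[[w]]$ is a sensible point of care, and the argument is complete.
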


\begin{proof}We have $A \big( \underbrace{p}_{\in\mathcal{J}} \big) \in A ( \mathcal{J} ) =0$ (by Proposition~\ref{prop.A.killsJ}); thus, $A(p) =0$. But Corollary~\ref{cor.pathless.D} (applied to $q=p$) yields
\begin{gather*}
(E\circ D) (p) =(B\circ A) (p) =B \Big( \underbrace{A(p) }_{=0} \Big) =B ( 0 ) =0
\end{gather*}
(since the map $B$ is $\mathbf{k}$-linear). Thus, $E ( D(p)) =(E\circ D) (p) =0$. Since the $\mathbf{k}$-linear map $E$ is injective (by Proposition~\ref{prop.E.inj}), we thus conclude that $D(p) =0$. This proves Lemma~\ref{lem.t-red.unique.0}.
\end{proof}

We are now ready to prove Theorem~\ref{thm.t-red.unique}:

\begin{proof}[Proof of Theorem~\ref{thm.t-red.unique}]We need to prove that $D ( q) $ does not depend on the choice of $q$. In other words, we need to prove that if $f$ and $g$ are two pathless polynomials $q\in\mathcal{X}$ such that $p\equiv q\operatorname{mod}\mathcal{J}$, then $D(f) =D(g) $.

So let $f$ and $g$ be two pathless polynomials $q\in\mathcal{X}$ such that $p\equiv q\operatorname{mod}\mathcal{J}$. Thus, $p\equiv f\operatorname{mod} \mathcal{J}$ and $p\equiv g\operatorname{mod}\mathcal{J}$. Hence, $f\equiv p\equiv g\operatorname{mod}\mathcal{J}$, so that $f-g\in\mathcal{J}$. Also, the polynomial $f-g\in\mathcal{X}$ is pathless (since it is the difference of the two pathless polynomials~$f$ and~$g$). Thus, Lemma~\ref{lem.t-red.unique.0} (applied to $f-g$ instead of $p$) shows that $D(f-g) =0$. Thus, $0=D(f-g) =D(f) -D(g) $ (since $D$ is a $\mathbf{k}$-algebra homomorphism). In other words, $D(f) =D(g) $. This proves
Theorem~\ref{thm.t-red.unique}.
\end{proof}

\subsection[Appendix: A symmetric description of $\mathcal{J}$]{Appendix: A symmetric description of $\boldsymbol{\mathcal{J}}$}\label{sect.symmetry}

In this section, let us give a different description of $\mathcal{J}$ that reveals a symmetry inherent in the setting. First, we introduce auxiliary polynomials. So far, we have only been considering indeterminates $x_{i,j}$
corresponding to pairs $(i,j) \in[n] ^{2}$ satisfying $i<j$. We shall now also define~$x_{i,j}$ for pairs $ (i,j) \in[n] ^{2}$ satisfying $i>j$; but these~$x_{i,j}$ will not be new indeterminates, but rather will be polynomials in~$\mathcal{X}$:

\begin{Definition}\label{def.symmetry.xji}\quad\begin{enumerate}\itemsep=0pt
\item[(a)] Let $(i,j) \in [ n] ^{2}$ be a pair satisfying $i>j$. Then, we define an element $x_{i,j}\in\mathcal{X}$ by $x_{i,j}=-\beta-x_{j,i}$.
Thus, an element $x_{i,j}\in\mathcal{X}$ is defined for any pair $ (i,j) $ of two distinct elements of~$[n]$.
\item[(b)] For any three distinct elements $i$, $j$, $k$ of $[n] $, we define a polynomial $J_{i,j,k}\in\mathcal{X}$ by
\begin{gather*}
J_{i,j,k}=x_{i,j}x_{j,k}+x_{j,k}x_{k,i}+x_{k,i}x_{i,j}+\beta ( x_{i,j}+x_{j,k}+x_{k,i}) +\beta^{2}-\alpha.
\end{gather*}
\end{enumerate}
\end{Definition}

\begin{Proposition}\label{prop.symmetry}The ideal $\mathcal{J}$ of $\mathcal{X}$ is generated by all polynomials $J_{i,j,k}$ for $i$, $j$, $k$ being three distinct elements of~$[n] $.
\end{Proposition}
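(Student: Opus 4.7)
The plan is to show two things: (I) for each increasing triple $i<j<k$, the polynomial $J_{i,j,k}$ coincides with the generator $x_{i,j}x_{j,k}-x_{i,k}(x_{i,j}+x_{j,k}+\beta)-\alpha$ of $\mathcal{J}$; and (II) the polynomial $J_{i,j,k}$ is invariant under arbitrary permutations of its three arguments $i,j,k$. Once both are established, the proposition follows immediately: if $\mathcal{J}'$ denotes the ideal generated by all $J_{a,b,c}$ with $a,b,c$ distinct, then (I) gives $J_{i,j,k}\in\mathcal{J}$ for every increasing triple, while (II) shows that every $J_{a,b,c}$ equals $J_{i,j,k}$ for the unique increasing reordering $i<j<k$ of $\{a,b,c\}$, whence $\mathcal{J}'\subseteq\mathcal{J}$; conversely (I) gives the reverse inclusion $\mathcal{J}\subseteq\mathcal{J}'$.

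For (I), I would simply substitute the definition $x_{k,i}=-\beta-x_{i,k}$ (which is the case $i>j$ of Definition~\ref{def.symmetry.xji}(a), here applied to the pair $(k,i)$ with $k>i$) into the formula for $J_{i,j,k}$, and expand. The three summands become $x_{i,j}x_{j,k}$, $x_{j,k}(-\beta-x_{i,k})=-\beta x_{j,k}-x_{i,k}x_{j,k}$, and $(-\beta-x_{i,k})x_{i,j}=-\beta x_{i,j}-x_{i,k}x_{i,j}$, while the $\beta$-linear part becomes $\beta x_{i,j}+\beta x_{j,k}-\beta^{2}-\beta x_{i,k}$; adding the $\beta^{2}-\alpha$ term produces cancellations which collapse the whole sum to $x_{i,j}x_{j,k}-x_{i,k}(x_{i,j}+x_{j,k}+\beta)-\alpha$, as desired.

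For (II), cyclic invariance $J_{i,j,k}=J_{j,k,i}$ is manifest from the definition (each summand in the formula simply gets cyclically relabelled), so it suffices to check invariance under the single transposition $(i,j,k)\mapsto(j,i,k)$. Using the three substitutions $x_{j,i}=-\beta-x_{i,j}$, $x_{i,k}=-\beta-x_{k,i}$, and $x_{k,j}=-\beta-x_{j,k}$ (all instances of Definition~\ref{def.symmetry.xji}(a)), one expands $J_{j,i,k}$ into a sum of monomials in $x_{i,j},x_{j,k},x_{k,i}$ plus terms involving $\beta$. The three quadratic products contribute $3\beta^{2}+2\beta(x_{i,j}+x_{j,k}+x_{k,i})+(x_{i,j}x_{j,k}+x_{j,k}x_{k,i}+x_{k,i}x_{i,j})$, while the linear $\beta$-part contributes $-3\beta^{2}-\beta(x_{i,j}+x_{j,k}+x_{k,i})$; adding $\beta^{2}-\alpha$ and combining these telescopes exactly back to the original $J_{i,j,k}$. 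Since cyclic permutations together with one transposition generate $S_{3}$, this gives full symmetry.

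The only real work is the expansion in (II); it is purely mechanical but must be carried out carefully because the terms $3\beta^{2}$, $-3\beta^{2}$, $\beta^{2}$ and the linear $\beta$-coefficients need to collapse precisely. This is the one spot where a miscalculation would derail the argument, and it is also the only place where the exact choice of the $\beta^{2}-\alpha$ tail in the definition of $J_{i,j,k}$ matters; that tail is in fact forced by the requirement that the expression be symmetric. After (I) and (II) are in hand the proposition is a one-line deduction.
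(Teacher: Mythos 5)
Your proposal is correct and follows essentially the same route as the paper: identify $J_{i,j,k}$ for increasing triples with the designated generators of $\mathcal{J}$ by substituting $x_{k,i}=-\beta-x_{i,k}$, then use the full $S_3$-symmetry of $J_{i,j,k}$ to pass between arbitrary distinct triples and increasing ones. The paper merely asserts both computations as straightforward, whereas you carry them out (and your expansions check out), so there is nothing to add.
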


\begin{proof}If $(u_{p}) _{p\in P}$ is any family of elements of $\mathcal{X}$, then $ \langle u_{p} \rangle _{p\in P}$ shall mean the ideal of $\mathcal{X}$ generated by this family $ ( u_{p} ) _{p\in P}$. Thus, we need to prove that
\begin{gather*} \mathcal{J}= \langle J_{i,j,k} \rangle _{i,j,k\text{ are three distinct elements of }[n] }.\end{gather*}

We know (from the definition of $\mathcal{J}$) that
\begin{gather}
\mathcal{J}= \langle x_{i,j}x_{j,k}-x_{i,k} ( x_{i,j}+x_{j,k} +\beta ) -\alpha \rangle _{(i,j,k) \in [n ] ^{3}\text{ satisfying }i<j<k}. \label{pf.prop.symmetry.J=1}
\end{gather}

But for each $(i,j,k) \in[n] ^{3}$ satisfying $i<j<k$, a straightforward computation reveals that
\begin{gather*}
x_{i,j}x_{j,k}-x_{i,k} ( x_{i,j}+x_{j,k}+\beta ) -\alpha =J_{i,j,k}.
\end{gather*}
Hence, (\ref{pf.prop.symmetry.J=1}) rewrites as follows:
\begin{gather}
\mathcal{J}= \langle J_{i,j,k} \rangle _{(i,j,k) \in[n] ^{3}\text{ satisfying }i<j<k}. \label{pf.prop.symmetry.J=3}
\end{gather}

On the other hand, the definition of $J_{i,j,k}$ shows that $J_{i,j,k}$ is symmetric in its three arguments $i$, $j$, $k$; in other words, we have
\begin{gather*}
J_{i,j,k}=J_{i,k,j}=J_{j,i,k}=J_{j,k,i}=J_{k,i,j}=J_{k,j,i}
\end{gather*}
whenever $i$, $j$, $k$ are three distinct elements of $[n] $. Thus,
\begin{gather*}
 \langle J_{i,j,k} \rangle _{i,j,k\text{ are three distinct elements of }[n] }= \langle J_{i,j,k} \rangle _{(i,j,k) \in[n] ^{3}\text{ satisfying }i<j<k}.
\end{gather*}
Comparing this with (\ref{pf.prop.symmetry.J=3}), we obtain $\mathcal{J}=\left\langle J_{i,j,k}\right\rangle _{i,j,k\text{ are three distinct elements of }[n] }$. This proves Proposition~\ref{prop.symmetry}.
\end{proof}

Proposition \ref{prop.symmetry} reveals a hidden symmetry in the definitions of $\mathcal{X}$ and~$\mathcal{J}$:

\begin{Proposition}\label{prop.symmetry2}Consider the symmetric group $S_{n}$ $($that is, the group of all permutations of~$[n] )$.
\begin{enumerate}\itemsep=0pt
\item[{\rm (a)}] There is a unique action of the group $S_{n}$ on $\mathcal{X}$ by $\mathbf{k}$-algebra automorphisms satisfying
\begin{gather*}
\sigma\cdot x_{i,j}=x_{\sigma(i) ,\sigma(j)}\qquad \text{for all }\sigma\in S_{n}\ \text{and all pairs }(i,j) \text{ of distinct elements of }[n] .
\end{gather*}
\item[{\rm (b)}] The ideal $\mathcal{J}$ is invariant under this action of $S_{n}$, and thus the quotient $\mathbf{k}$-algebra $\mathcal{X}/\mathcal{J}$ inherits this action of~$S_{n}$.
\end{enumerate}
\end{Proposition}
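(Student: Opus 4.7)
The plan is to use the universal property of $\mathcal{X}$ to define the action on generators, then use an ``extended'' version of the action formula (on all pairs of distinct elements, not only those with $i<j$) both to verify the group law and, via Proposition~\ref{prop.symmetry}, to see that $\mathcal{J}$ is preserved.

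For part~(a), uniqueness is immediate, since the $x_{i,j}$ with $i<j$ generate $\mathcal{X}$ as a $\mathbf{k}$-algebra and the required formula fixes $\sigma\cdot x_{i,j}$ on these generators. For existence, I fix $\sigma\in S_{n}$ and use the universal property of $\mathcal{X}$ to define a $\mathbf{k}$-algebra endomorphism $\sigma_{\ast}\colon \mathcal{X}\rightarrow\mathcal{X}$ by $\sigma_{\ast}(x_{i,j})=x_{\sigma(i),\sigma(j)}$ for each $i<j$, where the right-hand side is interpreted via Definition~\ref{def.symmetry.xji} (a genuine indeterminate when $\sigma(i)<\sigma(j)$, and the polynomial $-\beta-x_{\sigma(j),\sigma(i)}$ when $\sigma(i)>\sigma(j)$). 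The key technical step is to check that the formula $\sigma_{\ast}(x_{i,j})=x_{\sigma(i),\sigma(j)}$ in fact persists for \emph{all} pairs of distinct elements $(i,j)$: for $i>j$ we have $x_{i,j}=-\beta-x_{j,i}$ in $\mathcal{X}$, so $\sigma_{\ast}(x_{i,j})=-\beta-x_{\sigma(j),\sigma(i)}$, and the identity $-\beta-x_{\sigma(j),\sigma(i)}=x_{\sigma(i),\sigma(j)}$ holds in both subcases $\sigma(i)<\sigma(j)$ and $\sigma(i)>\sigma(j)$, directly from (or by rearranging) Definition~\ref{def.symmetry.xji}. With this extended formula in hand, the group law $(\sigma\tau)_{\ast}=\sigma_{\ast}\circ\tau_{\ast}$ becomes a one-line verification on generators, namely $\sigma_{\ast}(\tau_{\ast}(x_{i,j}))=\sigma_{\ast}(x_{\tau(i),\tau(j)})=x_{\sigma\tau(i),\sigma\tau(j)}=(\sigma\tau)_{\ast}(x_{i,j})$, where the middle step uses the extended formula and is therefore insensitive to whether $\tau$ preserves or reverses the order of $i$ and~$j$. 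Combined with $\operatorname{id}_{\ast}=\operatorname{id}$, this exhibits each $\sigma_{\ast}$ as a $\mathbf{k}$-algebra automorphism and gives the desired action.

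For part~(b), Proposition~\ref{prop.symmetry} tells us that $\mathcal{J}$ is generated by the polynomials $J_{i,j,k}$ indexed by triples of distinct elements $i,j,k\in[n]$. The extended action formula makes it immediate that $\sigma_{\ast}(J_{i,j,k})=J_{\sigma(i),\sigma(j),\sigma(k)}$, which is again one of these generators and therefore lies in $\mathcal{J}$. Hence $\sigma_{\ast}(\mathcal{J})\subseteq\mathcal{J}$ for every $\sigma\in S_{n}$, and the action descends to the quotient $\mathcal{X}/\mathcal{J}$. The main (and essentially only) obstacle is the bookkeeping needed to justify the extended action formula for the ``formal'' symbols $x_{i,j}$ with $i>j$; once that is in place, both the group law and the invariance of $\mathcal{J}$ follow by inspection.
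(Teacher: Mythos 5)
Your proof is correct, but it takes a genuinely different route from the paper's for part~(a). The paper introduces an auxiliary polynomial ring $\mathcal{Y}=\mathbf{k}[y_{i,j}\,|\,i\neq j]$ on \emph{all} ordered pairs of distinct indices, on which the $S_{n}$-action is tautologically well-defined, then constructs the ideal $\mathcal{K}=\langle y_{i,j}+y_{j,i}+\beta\rangle$, proves $\mathcal{X}\cong\mathcal{Y}/\mathcal{K}$ by exhibiting mutually inverse homomorphisms $\phi'$ and $\zeta$, and finally transports the action across this isomorphism. You instead define $\sigma_{\ast}$ directly on $\mathcal{X}$ by the universal property and verify the extended formula $\sigma_{\ast}(x_{i,j})=x_{\sigma(i),\sigma(j)}$ for all distinct pairs by a two-case computation from Definition~\ref{def.symmetry.xji}; the group law and bijectivity (via $(\sigma^{-1})_{\ast}$) then follow on generators. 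The two arguments hinge on the same underlying fact --- the $S_{n}$-equivariance of the relation $x_{i,j}=-\beta-x_{j,i}$ --- but yours is shorter and more elementary, while the paper's buys, as a byproduct, the symmetric presentation $\mathcal{X}\cong\mathcal{Y}/\mathcal{K}$, which makes the hidden symmetry of the setup explicit rather than merely verifying it. Part~(b) of your argument (applying the extended formula to the generators $J_{i,j,k}$ from Proposition~\ref{prop.symmetry} to get $\sigma_{\ast}(J_{i,j,k})=J_{\sigma(i),\sigma(j),\sigma(k)}$) coincides with the paper's.
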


\begin{proof}(a) Let $\mathcal{Y}$ be the polynomial ring
\begin{gather*}
\mathbf{k}\big[ y_{i,j}\,|\, (i,j) \in[n]^{2}\text{ such that }i\neq j\big] .
\end{gather*}
This is a polynomial ring in $n(n-1) $ indeterminates $y_{i,j}$ over $\mathbf{k}$. The symmetric group $S_{n}$ acts on $\mathcal{Y}$ by $\mathbf{k}$-algebra automorphisms; this action is defined by
\begin{gather*}
\sigma\cdot y_{i,j}=y_{\sigma(i) ,\sigma(j)}\qquad \text{for all }\sigma\in S_{n}\ \text{and all pairs }(i,j) \text{ of distinct elements of }[n] .
\end{gather*}
(The well-definedness of this action follows easily from the universal property of the polynomial ring $\mathcal{Y}$.)

Let $\phi\colon \mathcal{Y}\rightarrow\mathcal{X}$ be the unique $\mathbf{k} $-algebra homomorphism that sends each $y_{i,j}$ to $x_{i,j}$. This $\phi$ is well-defined by the universal property of the polynomial ring~$\mathcal{Y}$. Also, $\phi$ is surjective, since the generators $x_{i,j}$ of $\mathcal{X}$ all belong to the image of~$\phi$.

If $(u_{p}) _{p\in P}$ is any family of elements of $\mathcal{Y}$, then $\langle u_{p}\rangle _{p\in P}$ shall mean the ideal of $\mathcal{Y}$ generated by this family $(u_{p}) _{p\in P}$. Define an ideal $\mathcal{K}$ of~$\mathcal{Y}$ by
\begin{gather}
\mathcal{K}= \langle y_{i,j}+y_{j,i}+\beta \rangle _{(i,j ) \in[n] ^{2}\text{ such that }i\neq j}.\label{pf.prop.symmetry2.defK}
\end{gather}
Clearly, this ideal $\mathcal{K}$ is $S_{n}$-invariant. Hence, the quotient algebra $\mathcal{Y}/\mathcal{K}$ inherits the $S_{n}$-action from~$\mathcal{Y}$.

We are going to show that $\mathcal{X}\cong\mathcal{Y}/\mathcal{K}$.

Let $\pi\colon \mathcal{Y}\rightarrow\mathcal{Y}/\mathcal{K}$ be the canonical projection; this is a surjective $\mathbf{k}$-algebra homomorphism. The $\mathbf{k}$-algebra $\mathcal{Y}$ is generated by the $y_{i,j}$ for all $(i,j) \in[n] ^{2}$ such that $i\neq j$. Hence, the quotient algebra $\mathcal{Y}/\mathcal{K}$ is generated by their projections $\pi(y_{i,j}) $.

It is easy to see that $\phi ( \mathcal{K} ) =0$.\footnote{\textbf{Proof.} It is clearly sufficient to show that $\phi ( y_{i,j}+y_{j,i}+\beta ) =0$ for each $(i,j) \in[n] ^{2}$ such that $i\neq j$ (because of~(\ref{pf.prop.symmetry2.defK})). So let us fix some $(i,j) \in[n] ^{2}$ such that $i\neq j$. We must prove that $\phi ( y_{i,j}+y_{j,i}+\beta ) =0$.

This statement is clearly symmetric in $i$ and $j$; thus, we WLOG assume that $i\leq j$. Hence, $i<j$ (since $i\neq j$). The definition of $\phi$ yields $\phi ( y_{i,j} ) =x_{i,j}$ and $\phi ( y_{j,i} ) =x_{j,i}=-\beta-x_{i,j}$ (by the definition of $x_{j,i}$, since $j>i$). Now, $\phi$ is a $\mathbf{k}$-algebra homomorphism. Thus,
\begin{gather*}
\phi ( y_{i,j}+y_{j,i}+\beta ) =\underbrace{\phi ( y_{i,j} ) }_{=x_{i,j}}+\underbrace{\phi ( y_{j,i} ) }_{=-\beta-x_{i,j}}+\beta=x_{i,j}+ ( -\beta-x_{i,j} ) +\beta=0.
\end{gather*}
This completes our proof.} Hence, the $\mathbf{k}$-algebra homomorphism $\phi$ factors through the projection $\pi\colon \mathcal{Y}\rightarrow \mathcal{Y}/\mathcal{K}$. More precisely: There exists a $\mathbf{k}$-algebra
homomorphism $\phi^{\prime}\colon \mathcal{Y}/\mathcal{K}\rightarrow\mathcal{X}$ satisfying $\phi=\phi^{\prime}\circ\pi$. Consider this $\phi^{\prime}$. Thus, each $(i,j) \in[n] ^{2}$ such that $i\neq j$ satisfies
\begin{gather}
\phi^{\prime} ( \pi(y_{i,j}) ) =\underbrace{(\phi^{\prime}\circ\pi) }_{=\phi}(y_{i,j}) =\phi(y_{i,j}) =x_{i,j} \label{pf.prop.symmetry2.phi'}
\end{gather}
(by the definition of $\phi$).

Define a $\mathbf{k}$-algebra homomorphism $\zeta\colon \mathcal{X}\rightarrow \mathcal{Y}/\mathcal{K}$ by requiring that
\begin{gather}
\zeta(x_{i,j}) =\pi(y_{i,j}) \qquad \text{for each }(i,j) \in[n]^{2}\text{ satisfying }i<j. \label{pf.prop.symmetry2.zeta}
\end{gather}
(This is well-defined by the universal property of $\mathcal{X}$.) Then, it is easy to see that\footnote{\textbf{Proof of (\ref{pf.prop.symmetry2.zeta2}).} Let $(i,j) \in[n] ^{2}$ be such that $i\neq j$. We must prove
that $\zeta(x_{i,j}) =\pi(y_{i,j}) $.

If $i<j$, then this follows immediately from (\ref{pf.prop.symmetry2.zeta}). Thus, we WLOG assume that we don't have $i<j$. Hence, $i\geq j$, so that $i>j$ (since $i\neq j$). In other words, $j<i$. Thus, (\ref{pf.prop.symmetry2.zeta}) (applied to $( j,i) $ instead of $(i,j) $) shows that $\zeta(x_{j,i}) =\pi(y_{j,i}) $.

Notice that $y_{i,j}+y_{j,i}+\beta\in\mathcal{K}$ (by (\ref{pf.prop.symmetry2.defK})), so that $\pi ( y_{i,j}+y_{j,i}+\beta ) =0$ (since $\pi$ is the canonical projection $\mathcal{Y}\rightarrow\mathcal{Y}/\mathcal{K}$).

But the definition of $x_{i,j}$ yields $x_{i,j}=-\beta-x_{j,i}$ (since $i>j$). Applying the map $\zeta$ to both sides of this equality, we obtain
\begin{gather*}
\zeta(x_{i,j}) =\zeta( -\beta-x_{j,i})=-\beta-\underbrace{\zeta(x_{j,i}) }_{=\pi( y_{j,i} ) }\qquad (\text{since }\zeta\text{ is a }\mathbf{k}\text{-algebra homomorphism}) \\
\hphantom{\zeta(x_{i,j})}{} =-\beta-\pi(y_{j,i}) .
\end{gather*}
On the other hand, $\pi$ is a $\mathbf{k}$-algebra homomorphism, so that $\pi( y_{i,j}+y_{j,i}+\beta) =\pi(y_{i,j}) +\pi(y_{j,i}) +\beta$. Thus,
\begin{gather*}
\pi(y_{i,j}) +\pi(y_{j,i}) +\beta=\pi ( y_{i,j}+y_{j,i}+\beta) =0.
\end{gather*}
Hence, $\pi(y_{i,j}) =-\beta-\pi(y_{j,i}) $. Comparing this with $\zeta(x_{i,j}) =-\beta-\pi ( y_{j,i}) $, we obtain $\zeta(x_{i,j}) =\pi(y_{i,j}) $. This completes our proof of~(\ref{pf.prop.symmetry2.zeta2}).}
\begin{gather}
\zeta(x_{i,j}) =\pi(y_{i,j}) \qquad \text{for each }(i,j) \in[n] ^{2}\text{ satisfying }i\neq j. \label{pf.prop.symmetry2.zeta2}
\end{gather}

The equality (\ref{pf.prop.symmetry2.phi'}) shows that the $\mathbf{k}$-algebra homomorphism $\phi^{\prime}\colon \mathcal{Y}/\mathcal{K}\rightarrow \mathcal{X}$ sends the generators $\pi(y_{i,j}) $ of~$\mathcal{Y}/\mathcal{K}$ to the respective generators $x_{i,j}$ of~$\mathcal{X}$. The equality~(\ref{pf.prop.symmetry2.zeta2}) shows that the $\mathbf{k}$-algebra homomorphism $\zeta\colon \mathcal{X}\rightarrow\mathcal{Y}/\mathcal{K}$ sends the generators $x_{i,j}$ of $\mathcal{X}$ back to the respective generators $\pi(y_{i,j}) $ of $\mathcal{Y}/\mathcal{K}$. Hence, these two $\mathbf{k}$-algebra homomorphisms $\phi^{\prime}\colon \mathcal{Y}/\mathcal{K}\rightarrow\mathcal{X}$ and $\zeta\colon \mathcal{X}\rightarrow\mathcal{Y}/\mathcal{K}$ are mutually inverse. Thus, $\phi^{\prime}$ is a $\mathbf{k}$-algebra isomorphism.

Hence, $\mathcal{X}\cong\mathcal{Y}/\mathcal{K}$ as $\mathbf{k} $-algebras. Therefore, the $S_{n}$-action on $\mathcal{Y}/\mathcal{K}$ can be transported to~$\mathcal{X}$. The result is an action of the group~$S_{n}$ on~$\mathcal{X}$ by $\mathbf{k}$-algebra automorphisms satisfying
\begin{gather*}
\sigma\cdot x_{i,j}=x_{\sigma(i) ,\sigma(j)}\qquad \text{for all }\sigma\in S_{n}\ \text{and all pairs }(i,j) \text{ of distinct elements of }[n].
\end{gather*}
Moreover, this is clearly the only such action (because any $\mathbf{k}$-algebra automorphism of $\mathcal{X}$ is determined by its action on the generators~$x_{i,j}$). This proves Proposition~\ref{prop.symmetry2}(a).

(b) If $i$, $j$, $k$ are three distinct elements of $[n] $, then $\sigma\cdot J_{i,j,k}=J_{\sigma(i) ,\sigma(j) ,\sigma ( k ) }$ for each \smash{$\sigma\in S_{n}$} (as follows easily from the definitions of the elements involved). Hence, the action of $S_{n}$ on $\mathcal{X}$ permutes the family $ ( J_{i,j,k} )_{i,j,k\text{ are three distinct elements of }[n] }$. Thus, the ideal generated by this family is $S_{n}$-invariant. Since this ideal is~$\mathcal{J}$ (by Proposition~\ref{prop.symmetry}), we have thus shown that $\mathcal{J}$ is $S_{n}$-invariant. Hence, the quotient $\mathbf{k}$-algebra $\mathcal{X}/\mathcal{J}$ inherits an $S_{n}$-action from $\mathcal{X}$. Proposition~\ref{prop.symmetry2}(b) is thus proven.
\end{proof}

\section[Forkless polynomials and a basis of $\mathcal{X}/\mathcal{J}$]{Forkless polynomials and a basis of $\boldsymbol{\mathcal{X}/\mathcal{J}}$}

\subsection{Statements}

\looseness=-1 We have thus answered one of the major questions about the ideal $\mathcal{J}$; but we have begged perhaps the most obvious one: Can we find a basis of the $\mathbf{k}$-module $\mathcal{X}/\mathcal{J}$? This turns out to be much simpler than the above; the key is to use a different strategy. Instead of reducing polynomials to pathless polynomials, we shall reduce them to \textit{forkless} polynomials, defined as follows:

\begin{Definition} A monomial $\mathfrak{m}\in\mathfrak{M}$ is said to be \textit{forkless} if there exists no triple $(i,j,k) \in[n] ^{3}$ satisfying $i<j<k$ and $x_{i,j}x_{i,k}\,|\,\mathfrak{m}$ (as monomials).

A polynomial $p\in\mathcal{X}$ is said to be \textit{forkless} if it is a $\mathbf{k}$-linear combination of forkless monomials.
\end{Definition}

The following characterization of forkless polynomials is rather obvious:

\begin{Proposition}Let $\mathfrak{m}\in\mathfrak{M}$. Then, the monomial $\mathfrak{m}$ is forkless if and only if there exist a~map $f\colon [n-1] \rightarrow[n] $ and a map $g\colon [n-1] \rightarrow
\mathbb{N}$ such that
\begin{gather*}
 f(i) >i \qquad \text{for each }i\in[n-1] \qquad \text{and}\qquad \mathfrak{m}=\prod _{i\in[n-1] }x_{i,f(i) }^{g(i) }.
\end{gather*}
\end{Proposition}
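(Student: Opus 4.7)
The plan is to prove both directions by a direct combinatorial argument. The key observation is that forklessness of $\mathfrak{m}$ is equivalent to the statement: for each $i\in[n-1]$, at most one indeterminate $x_{i,j}$ with $j>i$ appears in $\mathfrak{m}$ with positive exponent. Once this is isolated, the existence of the maps $f$ and $g$ is a matter of bookkeeping.

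For the forward direction, I would write $\mathfrak{m}=\prod_{(i,j)\in[n]^2,\,i<j}x_{i,j}^{a_{i,j}}$ and, for each $i\in[n-1]$, consider the set $T_i=\{j\in\{i+1,\dots,n\}\,|\,a_{i,j}>0\}$. If some $T_i$ contained two distinct elements $j<k$, then $(i,j,k)$ would be a triple with $i<j<k$ and $x_{i,j}x_{i,k}\,|\,\mathfrak{m}$, contradicting forklessness. So $|T_i|\leq1$ for every $i\in[n-1]$. Define $f(i)$ to be the unique element of $T_i$ when $|T_i|=1$ and (say) $f(i)=i+1$ when $T_i=\varnothing$; set $g(i)=a_{i,f(i)}$ in the first case and $g(i)=0$ in the second. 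Then $f(i)>i$ for all $i$, and $\prod_{i\in[n-1]}x_{i,f(i)}^{g(i)}$ recovers $\mathfrak{m}$ (note that no factor of $\mathfrak{m}$ is lost, since the first index of any $x_{i,j}$ satisfies $i<j\leq n$, hence $i\in[n-1]$).

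For the backward direction, suppose $\mathfrak{m}=\prod_{i\in[n-1]}x_{i,f(i)}^{g(i)}$ with $f(i)>i$. Assume for contradiction that there is a triple $(i,j,k)\in[n]^3$ with $i<j<k$ and $x_{i,j}x_{i,k}\,|\,\mathfrak{m}$. Then both $x_{i,j}$ and $x_{i,k}$ divide $\mathfrak{m}$; but among all indeterminates of the form $x_{i,\cdot}$, only $x_{i,f(i)}$ occurs in the displayed product. Hence $j=f(i)=k$, contradicting $j<k$. Therefore $\mathfrak{m}$ is forkless.

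I do not expect a real obstacle here: the proposition is essentially a reformulation of the definition, and there is no uniqueness claim to worry about (when $g(i)=0$ the value of $f(i)$ is immaterial, so the pair $(f,g)$ is generally not unique). The only minor care point is handling the indices $i$ with $T_i=\varnothing$, which is dispatched by allowing $g(i)=0$ as above.
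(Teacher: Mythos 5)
Your proof is correct; the key reduction (forklessness is equivalent to ``at most one $x_{i,j}$ with $j>i$ occurs for each $i$'') is exactly the right observation, and both directions are handled cleanly, including the edge case $T_i=\varnothing$. The paper itself gives no proof, dismissing the proposition as ``rather obvious,'' and your argument is precisely the one being left implicit.
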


Now, we claim the following:

\begin{Theorem}\label{thm.forkless.span-uni}Let $p\in\mathcal{X}$. Then, there exists a unique forkless polynomial $q\in\mathcal{X}$ such that $p\equiv q\operatorname{mod}\mathcal{J}$.
\end{Theorem}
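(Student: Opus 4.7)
The plan is to prove Theorem~\ref{thm.forkless.span-uni} by exhibiting a Gröbner basis of $\mathcal{J}$ whose leading monomials coincide with the minimal non-forkless monomials, so that forkless monomials become exactly the standard monomials for this Gröbner basis. Concretely, I would equip $\mathcal{X}$ with the pure lexicographic monomial order induced by
\begin{equation*}
x_{1,2}>x_{1,3}>\cdots>x_{1,n}>x_{2,3}>x_{2,4}>\cdots>x_{n-1,n}
\end{equation*}
(that is, $x_{a,b}>x_{c,d}$ iff $(a,b)<_{\mathrm{lex}}(c,d)$). Rewriting the defining generator of $\mathcal{J}$ indexed by $(i,j,k)$ with $i<j<k$ as
\begin{equation*}
G_{i,j,k} := x_{i,j}x_{i,k} - x_{i,j}x_{j,k} + x_{i,k}x_{j,k} + \beta x_{i,k} + \alpha,
\end{equation*}
one checks directly (using $x_{i,j}>x_{i,k}>x_{j,k}$ in this order) that $x_{i,j}x_{i,k}$ is the leading monomial of $G_{i,j,k}$. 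Since a monomial in $\mathcal{X}$ is forkless iff it is not divisible by any such $x_{i,j}x_{i,k}$, the forkless monomials are precisely the standard monomials for the family $(G_{i,j,k})$.

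Granted the claim that $\{G_{i,j,k}:1\le i<j<k\le n\}$ is a Gröbner basis of $\mathcal{J}$, both halves of the theorem become routine. Existence of a forkless $q\equiv p\pmod{\mathcal{J}}$ follows from the usual reduction algorithm: repeatedly locate a term of the current polynomial whose monomial is divisible by some $x_{i,j}x_{i,k}$ and rewrite it using $G_{i,j,k}$; each step strictly decreases the leading monomial in the well-founded lex order, so the procedure terminates at a forkless polynomial. Uniqueness follows from the standard Macaulay-type consequence of the Gröbner basis property: the residues of the forkless monomials form a $\mathbf{k}$-basis of $\mathcal{X}/\mathcal{J}$, so the difference $q-q'$ of two forkless polynomials congruent modulo $\mathcal{J}$ is a $\mathbf{k}$-linear combination of forkless monomials lying in $\mathcal{J}$, hence zero.

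The real work, and the main obstacle I expect, is verifying the Gröbner basis claim itself, which I would attack via Buchberger's criterion. For each pair of generators $G_{i,j,k}, G_{i',j',k'}$ whose leading monomials $x_{i,j}x_{i,k}$ and $x_{i',j'}x_{i',k'}$ share a variable, one forms the $S$-polynomial and must reduce it to $0$ modulo the family $(G_{i,j,k})$. The overlap patterns split into a small number of combinatorial cases (common first index $i=i'$ with one shared variable in each leading monomial, or a variable of one leading monomial coinciding with the non-leading variable $x_{j,k}$ of another via identifications like $(i,j)=(i',k')$), and in each case the $S$-polynomial reduction is mechanical but tedious, largely because the $\alpha$ and $\beta$ ``tails'' produce lower-degree cascade terms that themselves must be chased down by further reductions. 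Once every such $S$-polynomial has been checked to reduce to $0$, Buchberger's criterion closes the argument and Theorem~\ref{thm.forkless.span-uni} follows.
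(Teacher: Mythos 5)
Your strategy coincides with the paper's: Theorem~\ref{thm.forkless.span-uni} is deduced there from Proposition~\ref{prop.forkless.basis}, which in turn rests on Proposition~\ref{prop.forkless.groebner}, asserting that the (negated) generators $x_{i,k}x_{i,j}-x_{i,j}x_{j,k}+x_{i,k}x_{j,k}+\beta x_{i,k}+\alpha$ form a Gr\"obner basis of $\mathcal{J}$ with head terms $x_{i,j}x_{i,k}$, so that the forkless monomials are exactly the reduced (standard) monomials; existence then follows by reduction and uniqueness by the Macaulay--Buchberger basis theorem (Proposition~\ref{prop.groebner.standardbasis}), just as you propose. Your choice of term order is also fine: as the paper remarks, any term order with $x_{i,j}>x_{j,k}$ and $x_{i,k}>x_{j,k}$ for $i<j<k$ works. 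Two small points of hygiene: since $\mathbf{k}$ is only a commutative ring, you should observe that the generators are \emph{monic}, which is what keeps Buchberger's S-polynomial criterion (Proposition~\ref{prop.groebner.buch1}) valid in this generality; and in your termination argument it is not the leading monomial of the whole polynomial that drops at each step (reducing a non-leading term leaves it unchanged), but rather the largest reducible monomial, or the finite multiset of monomials in a suitable well-founded order.

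The genuine gap is that you stop exactly where the content of the proof begins: you assert that the S-polynomial reductions are ``mechanical but tedious'' and do not carry them out. They are not purely mechanical. After discarding pairs with disjoint head terms, one is left with two generators among the three $u_1,u_2,u_3$ indexed by $(a,b,c)$, $(a,b,d)$, $(a,c,d)$ for some $a<b<c<d$, and every reduction to zero in the paper requires bringing in the \emph{fourth} generator $u_4$, indexed by $(b,c,d)$, which is not one of the two whose S-polynomial is being formed. For instance, $\operatorname{spol}(u_1,u_2)=x_{a,d}u_1-x_{a,c}u_2$ is handled via the syzygy
\begin{gather*}
u_{1}(x_{a,d}-x_{b,d})-u_{2}(x_{a,c}-x_{b,c})-u_{3}(x_{b,c}-x_{b,d})+u_{4}(x_{a,c}-x_{a,d})=0,
\end{gather*}
which rewrites the S-polynomial as a sum $\sum_{s}a_{s}\mathfrak{s}_{s}g_{s}$ in which the monomials $\mathfrak{s}_{s}\operatorname{HT}(g_{s})$ are pairwise distinct, whence Lemma~\ref{lem.groebner.to0} gives reduction to $0$; the identities for $\operatorname{spol}(u_1,u_3)$ and $\operatorname{spol}(u_2,u_3)$ additionally involve $\beta u_2$ and $\beta u_3$ terms. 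Finding these identities (or performing an equivalent explicit division) is the actual work, and your proof is incomplete until it is supplied.
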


\begin{Proposition}\label{prop.forkless.basis}The projections of the forkless monomials $\mathfrak{m}\in\mathfrak{M}$ onto the quotient ring~$\mathcal{X}/\mathcal{J}$ form a~basis of the $\mathbf{k}$-module~$\mathcal{X}/\mathcal{J}$.
\end{Proposition}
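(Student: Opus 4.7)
The plan is to deduce Proposition \ref{prop.forkless.basis} directly from Theorem \ref{thm.forkless.span-uni}, since the latter bundles both the spanning property (existence of a forkless representative) and the linear independence (uniqueness) into a single statement. So the entire proof should be essentially a two-line unpacking, with the real work having been done in Theorem \ref{thm.forkless.span-uni} itself.

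For the spanning claim, I would take any element $\overline{p} \in \mathcal{X}/\mathcal{J}$ (the projection of some $p \in \mathcal{X}$), apply the existence part of Theorem \ref{thm.forkless.span-uni} to obtain a forkless $q \in \mathcal{X}$ with $p \equiv q \mod \mathcal{J}$, and write $q$ as a $\mathbf{k}$-linear combination of forkless monomials. Projecting to $\mathcal{X}/\mathcal{J}$ gives $\overline{p} = \overline{q}$ expressed as a $\mathbf{k}$-linear combination of projections of forkless monomials.

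For linear independence, suppose $\sum_{\mathfrak{m}} c_{\mathfrak{m}} \overline{\mathfrak{m}} = 0$ in $\mathcal{X}/\mathcal{J}$, where the sum runs over forkless monomials and only finitely many $c_{\mathfrak{m}} \in \mathbf{k}$ are nonzero. Then the polynomial $q := \sum_{\mathfrak{m}} c_{\mathfrak{m}} \mathfrak{m} \in \mathcal{X}$ is forkless (by construction) and satisfies $q \equiv 0 \mod \mathcal{J}$. But the zero polynomial is trivially forkless and also satisfies $0 \equiv 0 \mod \mathcal{J}$, so by the uniqueness part of Theorem \ref{thm.forkless.span-uni} (applied to $p = 0$), we must have $q = 0$ as an honest polynomial in $\mathcal{X}$. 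Since forkless monomials are pairwise distinct elements of the monoid $\mathfrak{M}$, the standard linear independence of monomials in the polynomial ring $\mathcal{X}$ forces every $c_{\mathfrak{m}}$ to vanish.

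The main obstacle is not in this proposition but in Theorem \ref{thm.forkless.span-uni}, which the introduction signals will be proved via an explicit Gröbner basis of $\mathcal{J}$. Concretely, one expects to equip $\mathfrak{M}$ with a suitable monomial order (for example, one in which the leading term of each generator $x_{i,j}x_{j,k} - x_{i,k}(x_{i,j}+x_{j,k}+\beta) - \alpha$ becomes a ``fork'' monomial $x_{i,j}x_{i,k}$ rather than the ``path'' monomial $x_{i,j}x_{j,k}$, exploiting the symmetric reformulation in Proposition \ref{prop.symmetry}); check that the resulting set of rewriting rules is confluent (e.g., via Buchberger's criterion, by verifying that all S-polynomials reduce to zero); and conclude that the forkless monomials are precisely the normal forms modulo the resulting Gröbner basis. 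This gives both the existence (termination of reduction) and the uniqueness (confluence) asserted in Theorem \ref{thm.forkless.span-uni}, from which Proposition \ref{prop.forkless.basis} then follows immediately as above.
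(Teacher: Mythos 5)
Your proposal is correct and takes essentially the same approach as the paper: the paper likewise exhibits an explicit Gr\"obner basis of $\mathcal{J}$ whose head terms are the fork monomials $x_{i,k}x_{i,j}$ (Proposition~\ref{prop.forkless.groebner}, verified via Buchberger's S-polynomial criterion) and then reads off the basis statement from the Macaulay--Buchberger basis theorem, the $G$-reduced monomials being exactly the forkless ones. The only cosmetic difference is the order of deduction --- the paper proves Proposition~\ref{prop.forkless.basis} directly from the Gr\"obner basis and then declares Theorem~\ref{thm.forkless.span-uni} a restatement of it, whereas you derive the proposition from the theorem --- but the underlying work is identical.
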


\subsection{A reminder on Gr\"{o}bner bases}

Theorem~\ref{thm.forkless.span-uni} and Proposition~\ref{prop.forkless.basis} can be proven using the theory of Gr\"{o}bner bases. See, e.g.,~\cite{BecWei98} for an introduction. Let us outline the argument. We shall use
the following concepts:

\begin{Definition}\label{def.groebner.all}Let $\Xi$ be a set of indeterminates. Let $\mathcal{X}_{\Xi}$ be the polynomial ring $\mathbf{k}\left[ \xi\,|\, \xi\in\Xi\right] $ over $\mathbf{k}$ in these indeterminates. Let $\mathfrak{M}_{\Xi}$ be the set of all monomials in these indeterminates (i.e., the free abelian monoid on the set~$\Xi$).

(For example, if $\Xi=\left\{ x_{i,j}\,|\, (i,j) \in[n]^{2}\text{ satisfying }i<j\right\} $, then $\mathcal{X}_{\Xi}=\mathcal{X}$ and $\mathfrak{M}_{\Xi}=\mathfrak{M}$.)
\begin{enumerate}\itemsep=0pt
\item[(a)] A \textit{term order} on $\mathfrak{M}_{\Xi}$ is a total order on the set $\mathfrak{M}_{\Xi}$ that satisfies the following conditions:
\begin{itemize}\itemsep=0pt
\item Each $\mathfrak{m}\in\mathfrak{M}_{\Xi}$ satisfies $1\leq\mathfrak{m}$ (where $1$ is the trivial monomial in $\mathfrak{M}_{\Xi}$).
\item If $\mathfrak{m}$, $\mathfrak{u}$ and $\mathfrak{v}$ are three elements of $\mathfrak{M}_{\Xi}$ satisfying $\mathfrak{u}\leq\mathfrak{v}$, then $\mathfrak{mu}\leq\mathfrak{mv}$.
\end{itemize}
\item[(b)] If we are given a total order on the set $\Xi$, then we canonically obtain a term order on $\mathfrak{M}_{\Xi}$ defined as follows: For two monomials $\mathfrak{m}=\prod_{\xi\in\Xi}\xi^{m_{\xi}}$ and
$\mathfrak{n}=\prod_{\xi\in\Xi}\xi^{n_{\xi}}$ in $\mathfrak{M}_{\Xi}$, we set $\mathfrak{m}\leq\mathfrak{n}$ if and only if either $\mathfrak{m}=\mathfrak{n}$ or the largest $\xi\in\Xi$ for which $m_{\xi}$ and $n_{\xi}$
differ satisfies $m_{\xi}<n_{\xi}$. This term order is called the \textit{inverse lexicographical order on the set} $\mathfrak{M}_{\Xi}$ \textit{determined by the given total order on }$\Xi$.

\item[(c)] Two monomials $\mathfrak{m}=\prod_{\xi\in\Xi}\xi^{m_{\xi}}$ and $\mathfrak{n}=\prod_{\xi\in\Xi}\xi^{n_{\xi}}$ in $\mathfrak{M}_{\Xi}$ are said to be \textit{non-disjoint} if there exists some $\xi\in\Xi$ satisfying $m_{\xi}>0$ and $n_{\xi}>0$. Otherwise, $\mathfrak{m}$ and $\mathfrak{n}$ are said to be \textit{disjoint}.

From now on, let us assume that some term order on $\mathfrak{M}_{\Xi}$ has been chosen. The next definitions will all rely on this term order.

\item[(d)] If $f\in\mathcal{X}_{\Xi}$ is a nonzero polynomial, then the \textit{head term} of $f$ denotes the largest $\mathfrak{m}\in\mathfrak{M}_{\Xi}$ such that the coefficient of~$\mathfrak{m}$ in $f$ is nonzero. This head term will be denoted by $\operatorname*{HT}(f) $. Furthermore, if $f\in\mathcal{X}_{\Xi}$ is a nonzero polynomial, then the \textit{head coefficient} of $f$ is defined to be the coefficient of~$\operatorname*{HT}(f) $ in $f$; this coefficient will be denoted by $\operatorname*{HC}(f) $.
\item[(e)] A nonzero polynomial $f\in\mathcal{X}_{\Xi}$ is said to be \textit{monic} if its head coefficient~$\operatorname*{HC}(f) $ is~$1$.
\item[(f)] If $\mathfrak{m}=\prod_{\xi\in\Xi}\xi^{m_{\xi}}$ and $\mathfrak{n}=\prod_{\xi\in\Xi}\xi^{n_{\xi}}$ are two monomials in~$\mathfrak{M}_{\Xi}$, then the \textit{lowest common multiple} $\operatorname{lcm}( \mathfrak{m},\mathfrak{n}) $ of~$\mathfrak{m}$ and~$\mathfrak{n}$ is defined to be the monomial $\prod_{\xi \in\Xi}\xi^{\max \{ m_{\xi},n_{\xi} \} }$. (Thus, $\operatorname{lcm} ( \mathfrak{m},\mathfrak{n} ) =\mathfrak{mn}$ if and only if $\mathfrak{m}$ and $\mathfrak{n}$ are disjoint.)
\item[(g)] If $g_{1}$ and $g_{2}$ are two monic polynomials in $\mathcal{X}_{\Xi}$, then the S-\textit{polynomial of}~$g_{1}$ \textit{and}~$g_{2}$ is defined to be the polynomial $\mathfrak{s}_{1}g_{1}-\mathfrak{s}_{2}g_{2}$, where $\mathfrak{s}_{1}$ and $\mathfrak{s}_{2}$ are the unique two monomials satisfying $\mathfrak{s}_{1}\operatorname*{HT} ( g_{1} ) =\mathfrak{s}_{2}\operatorname*{HT} ( g_{2} ) =\operatorname{lcm} ( \operatorname*{HT} ( g_{1} ) ,\operatorname*{HT} (g_{2}) ) $. This S-polynomial is denoted by $\operatorname*{spol} ( g_{1},g_{2} ) $.

From now on, let $G$ be a subset of $\mathcal{X}_{\Xi}$ that consists of monic polynomials.

\item[(h)] We define a binary relation $\underset{G}{\longrightarrow}$ on the set $\mathcal{X}_{\Xi}$ as follows: For two polynomials~$f$ and~$g$ in~$\mathcal{X}_{\Xi}$, we set $f\underset{G}{\longrightarrow}g$ (and say that~$f$ \textit{reduces to }$g$ \textit{modulo}~$G$) if there exists some $p\in G$ and some monomials $\mathfrak{t}\in\mathfrak{M}_{\Xi}$ and $\mathfrak{s} \in\mathfrak{M}_{\Xi}$ with the following properties:
\begin{itemize}\itemsep=0pt
\item The coefficient of $\mathfrak{t}$ in $f$ is $\neq0$.
\item We have $\mathfrak{s}\cdot\operatorname*{HT}(p) =\mathfrak{t}$.
\item If $a$ is the coefficient of $\mathfrak{t}$ in $f$, then $g=f-a\cdot \mathfrak{s}\cdot p$.
\end{itemize}

\item[(i)] We let $\overset{\ast}{\underset{G}{\longrightarrow}}$ denote the reflexive-and-transitive closure of the relation $\underset{G}{\longrightarrow}$.

\item[(j)] We say that a monomial $\mathfrak{m}\in\mathfrak{M}_{\Xi}$ is $G$\textit{-reduced} if it is not divisible by the head term of any element of~$G$.
We say that a polynomial $q\in\mathcal{X}_{\Xi}$ is $G$\textit{-reduced} if $q$ is a $\mathbf{k}$-linear combination of $G$-reduced monomials.

\item[(k)] Let $\mathcal{I}$ be an ideal of $\mathcal{X}_{\Xi}$. The set $G$ is said to be a \textit{Gr\"{o}bner basis} of the ideal $\mathcal{I}$ if and only if the set~$G$ generates~$\mathcal{I}$ and has the following two
equivalent properties:

\begin{itemize}\itemsep=0pt
\item For each $p\in\mathcal{X}_{\Xi}$, there is a unique $G$-reduced $q\in\mathcal{X}_{\Xi}$ such that $p\overset{\ast}{\underset{G}{\longrightarrow}}q$.
\item For each $p\in\mathcal{I}$, we have $p\overset{\ast}{\underset{G}{\longrightarrow}}0$.
\end{itemize}
\end{enumerate}
\end{Definition}

The definition we just gave is modelled after the definitions in \cite[Chapter~5]{BecWei98}; however, there are several minor differences:
\begin{itemize}\itemsep=0pt
\item We use the word ``monomial'' in the same meaning as \cite[Chapter 5]{BecWei98} use the word ``term'' (but not in the same meaning as \cite[Chapter~5]{BecWei98} use the word ``monomial'').

\item We allow $\mathbf{k}$ to be a commutative ring, whereas \cite[Chapter~5]{BecWei98} require $\mathbf{k}$ to be a field. This leads to some complications in the theory of Gr\"{o}bner bases; in particular, not every ideal has a Gr\"{o}bner basis anymore. However, everything \textit{we} are going to use about Gr\"{o}bner bases in this paper is still true in our general setting.

\item We require the elements of the Gr\"{o}bner basis $G$ to be monic, whereas \cite[Chapter~5]{BecWei98} me\-rely assume them to be nonzero polynomials. In this way, we are sacrificing some of the generality of \cite[Chapter~5]{BecWei98} (a sacrifice necessary to ensure that things don't go wrong when~$\mathbf{k}$ is not a field). However, this is not a major loss of generality, since in the situation of \cite[Chapter~5]{BecWei98} the difference between monic polynomials and arbitrary nonzero polynomials is not particularly large (we can scale any nonzero polynomial by a constant scalar to obtain a monic polynomial, and so we can assume the polynomials to be monic in most of the proofs).
\end{itemize}

The following fact is useful even if almost trivial:

\begin{Lemma}\label{lem.groebner.to0}Let $\Xi$, $\mathcal{X}_{\Xi}$ and $\mathfrak{M}_{\Xi}$ be as in Definition~{\rm \ref{def.groebner.all}}. Let $G$ be a subset of~$\mathcal{X}_{\Xi}$ that consists of monic polynomials. Let~$S$ be a finite set. For each $s\in S$, let $g_{s}$ be an element of~$G$, and let $\mathfrak{s}_{s}\in\mathfrak{M}_{\Xi}$ and $a_{s}\in\mathbf{k}$ be arbitrary. Assume that the monomials $\mathfrak{s}_{s}\operatorname*{HT}(g_{s}) $ for all $s\in S$ are distinct. Then, $\sum_{s\in S}a_{s}\mathfrak{s}_{s}g_{s}\overset{\ast}{\underset{G}{\longrightarrow}}0$.
\end{Lemma}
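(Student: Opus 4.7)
The plan is to induct on the cardinality of $S$. In the base case $S = \varnothing$, the sum is $0$, which reduces to $0$ trivially via the reflexive closure. For the inductive step, I would first discard all indices $s \in S$ with $a_s = 0$, since those contribute nothing to $f := \sum_{s \in S} a_s \mathfrak{s}_s g_s$, and the distinctness hypothesis is preserved under restriction; if this shrinks $S$, induction applies, so WLOG every $a_s$ is nonzero.

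Next, I would pick out the ``top'' index. Since $S$ is finite and the $\mathfrak{s}_s \operatorname{HT}(g_s)$ are distinct, there is a unique $s_0 \in S$ such that $\mathfrak{s}_{s_0} \operatorname{HT}(g_{s_0})$ is the maximum of this finite set under the term order. I claim that the coefficient of the monomial $\mathfrak{t} := \mathfrak{s}_{s_0} \operatorname{HT}(g_{s_0})$ in $f$ is exactly $a_{s_0}$, hence nonzero. For this, note that each $g_s$ is monic, so every monomial occurring in $g_s$ is $\leq \operatorname{HT}(g_s)$; by the term-order axiom, every monomial occurring in $\mathfrak{s}_s g_s$ is $\leq \mathfrak{s}_s \operatorname{HT}(g_s)$. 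For $s \neq s_0$, the maximality of $\mathfrak{s}_{s_0} \operatorname{HT}(g_{s_0})$ together with distinctness forces $\mathfrak{s}_s \operatorname{HT}(g_s) < \mathfrak{t}$, so $\mathfrak{t}$ cannot appear in $\mathfrak{s}_s g_s$. The only contribution to $[\mathfrak{t}] f$ therefore comes from $a_{s_0} \mathfrak{s}_{s_0} g_{s_0}$, and since $\operatorname{HC}(g_{s_0}) = 1$, this contribution is $a_{s_0}$.

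This verifies precisely the three bullet points in Definition~\ref{def.groebner.all}(h) (with $p = g_{s_0}$, $\mathfrak{s} = \mathfrak{s}_{s_0}$, and $a = a_{s_0}$), yielding the one-step reduction
\begin{gather*}
f \underset{G}{\longrightarrow} f - a_{s_0} \mathfrak{s}_{s_0} g_{s_0} = \sum_{s \in S \setminus \{s_0\}} a_s \mathfrak{s}_s g_s.
\end{gather*}
The right-hand side is a sum of the same shape over the strictly smaller index set $S \setminus \{s_0\}$, whose associated monomials $\mathfrak{s}_s \operatorname{HT}(g_s)$ are still distinct. By the inductive hypothesis, it reduces to $0$; composing with the one-step reduction and using transitivity of $\overset{\ast}{\underset{G}{\longrightarrow}}$ completes the proof.

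There is no real obstacle: the only point requiring care is the verification that $[\mathfrak{t}] f = a_{s_0}$, which rests on both hypotheses (monicity of the elements of $G$ and distinctness of the $\mathfrak{s}_s \operatorname{HT}(g_s)$); once that is in hand, the induction runs smoothly.
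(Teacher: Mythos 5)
Your proof is correct: the induction on $\lvert S\rvert$, the reduction at the maximal monomial $\mathfrak{s}_{s_0}\operatorname*{HT}(g_{s_0})$, and the verification that its coefficient in $f$ is exactly $a_{s_0}$ (using monicity, distinctness, and cancellativity of the monomial monoid) constitute the standard argument for this lemma, which the paper itself omits (deferring to the detailed version \cite{verlong}) and describes as ``almost trivial''. No gaps.
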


\begin{proof}See \cite{verlong}.
\end{proof}

One of Buchberger's celebrated results is the following proposition, which allows us to verify that a given finite set $G$ is a Gr\"{o}bner basis of an ideal $\mathcal{I}$ using a finite computation:

\begin{Proposition}\label{prop.groebner.buch1}Let $\Xi$, $\mathcal{X}_{\Xi}$ and $\mathfrak{M}_{\Xi}$ be as in Definition~{\rm \ref{def.groebner.all}}. Let $\mathcal{I}$ be an ideal of $\mathcal{X}_{\Xi}$. Let $G$ be a subset of $\mathcal{X}_{\Xi}$ that consists of monic polynomials. Assume that the set $G$ generates $\mathcal{I}$. Then, $G$ is a Gr\"{o}bner basis of $\mathcal{I}$ if and only if it has the following property:
\begin{itemize}\itemsep=0pt
\item If $g_{1}$ and $g_{2}$ are two elements of the set $G$ such that the head terms of $g_{1}$ and $g_{2}$ are non-disjoint, then $\operatorname*{spol} ( g_{1},g_{2}) \overset{\ast}{\underset{G}{\longrightarrow}}0$.
\end{itemize}
\end{Proposition}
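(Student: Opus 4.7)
This is Buchberger's S-polynomial criterion, and my plan is to follow the classical argument (as in \cite{BecWei98}), lightly adjusted to the fact that $\mathbf{k}$ is only a commutative ring; this last point is harmless since the elements of $G$ are required to be monic. The ``only if'' direction is immediate: any $\operatorname*{spol}(g_1,g_2)=\mathfrak{s}_1 g_1-\mathfrak{s}_2 g_2$ lies in $\mathcal{I}$ (because $g_1,g_2\in G\subseteq\mathcal{I}$), and one of the defining properties of a Gr\"{o}bner basis is that every element of $\mathcal{I}$ reduces to $0$ modulo $G$.

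For the substantive ``if'' direction, I plan to show directly that every $p\in\mathcal{I}$ satisfies $p\overset{\ast}{\underset{G}{\longrightarrow}}0$. Since $G$ generates $\mathcal{I}$, write $p=\sum_{s\in S}a_s\mathfrak{s}_s g_s$ with $S$ finite, $g_s\in G$, $\mathfrak{s}_s\in\mathfrak{M}_\Xi$, and $a_s\in\mathbf{k}$. Among all such representations of $p$, pick one minimizing $M:=\max_{s\in S}\mathfrak{s}_s\operatorname*{HT}(g_s)$ in the term order (well-defined because a term order is a well-order on $\mathfrak{M}_\Xi$). If the monomials $\mathfrak{s}_s\operatorname*{HT}(g_s)$ happen to be pairwise distinct, Lemma~\ref{lem.groebner.to0} immediately gives $p\overset{\ast}{\underset{G}{\longrightarrow}}0$, and we are done.

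Otherwise, two indices $i,j\in S$ satisfy $\mathfrak{s}_i\operatorname*{HT}(g_i)=\mathfrak{s}_j\operatorname*{HT}(g_j)=M$. I would factor $\mathfrak{s}_i=\mathfrak{c}\mathfrak{u}$ and $\mathfrak{s}_j=\mathfrak{c}\mathfrak{v}$ with $\mathfrak{u}\operatorname*{HT}(g_i)=\mathfrak{v}\operatorname*{HT}(g_j)=\operatorname{lcm}(\operatorname*{HT}(g_i),\operatorname*{HT}(g_j))$, so that $\mathfrak{c}\cdot\operatorname*{spol}(g_i,g_j)=\mathfrak{s}_i g_i-\mathfrak{s}_j g_j$, and rewrite $a_i\mathfrak{s}_i g_i+a_j\mathfrak{s}_j g_j=a_i\mathfrak{c}\cdot\operatorname*{spol}(g_i,g_j)+(a_i+a_j)\mathfrak{s}_j g_j$. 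When $\operatorname*{HT}(g_i)$ and $\operatorname*{HT}(g_j)$ are non-disjoint, the hypothesis yields $\operatorname*{spol}(g_i,g_j)=\sum_t b_t\mathfrak{r}_t g_t$ where, by the very definition of the reduction $\overset{\ast}{\underset{G}{\longrightarrow}}0$, each $\mathfrak{r}_t\operatorname*{HT}(g_t)$ is strictly below $\operatorname{lcm}(\operatorname*{HT}(g_i),\operatorname*{HT}(g_j))$. Substituting back produces a new representation of $p$ in which either $M$ strictly decreases, or $M$ is unchanged but fewer indices achieve it; iterating reduces us to the distinct-leading-monomial case. The disjoint case is a classical side computation: when $\operatorname*{HT}(g_i)$ and $\operatorname*{HT}(g_j)$ are disjoint, one has $\mathfrak{u}=\operatorname*{HT}(g_j)$ and $\mathfrak{v}=\operatorname*{HT}(g_i)$, and a direct expansion of $\operatorname*{spol}(g_i,g_j)$ shows it reduces to $0$ modulo $G$ without any further hypothesis.

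I expect the main obstacle to be the bookkeeping in the substitution step — verifying that every new monomial $\mathfrak{c}\mathfrak{r}_t\operatorname*{HT}(g_t)$ produced by substituting in the reduction of $\operatorname*{spol}(g_i,g_j)$ really is strictly below $M$ (this uses compatibility of the term order with multiplication). A secondary technicality is the equivalence of the two definitions of Gr\"{o}bner basis bulleted in Definition~\ref{def.groebner.all}(k), which is a standard noetherian/local-confluence argument relying on the fact that $\underset{G}{\longrightarrow}$ terminates — itself a consequence of the term order being a well-order on $\mathfrak{M}_\Xi$.
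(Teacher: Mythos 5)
For the record: the paper does not prove Proposition~\ref{prop.groebner.buch1} at all --- it quotes it from the literature (\cite[Theorem~5.68]{BecWei98} and \cite{Graaf16}), with the ring-coefficient details relegated to \cite{verlong}. So you are reproving Buchberger's criterion from scratch, and your outline is the classical one: the ``only if'' direction via $\operatorname*{spol}(g_1,g_2)\in\mathcal{I}$, and the ``if'' direction by minimizing the top monomial $M$ of a representation $p=\sum_s a_s\mathfrak{s}_s g_s$, using the telescoping identity $a_i\mathfrak{s}_ig_i+a_j\mathfrak{s}_jg_j=a_i\mathfrak{c}\operatorname*{spol}(g_i,g_j)+(a_i+a_j)\mathfrak{s}_jg_j$ plus the hypothesis to push cancellations strictly below the $\operatorname{lcm}$, and finishing with Lemma~\ref{lem.groebner.to0}. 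Monicity of the elements of $G$ is indeed exactly what makes this go through over a commutative ring, and the disjoint-head-term side computation is correct as you describe it.

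There is one genuine flaw in the induction as you set it up. The dichotomy ``either the monomials $\mathfrak{s}_s\operatorname*{HT}(g_s)$ are pairwise distinct, or two indices achieve $M$'' is false: a coincidence $\mathfrak{s}_i\operatorname*{HT}(g_i)=\mathfrak{s}_j\operatorname*{HT}(g_j)$ can occur strictly below $M$. Correspondingly, your loop decreases the pair $\bigl(M,\,\#\{s\colon \mathfrak{s}_s\operatorname*{HT}(g_s)=M\}\bigr)$, whose terminal state is ``exactly one index attains $M$,'' \emph{not} ``all $\mathfrak{s}_s\operatorname*{HT}(g_s)$ pairwise distinct,'' so Lemma~\ref{lem.groebner.to0} is not applicable when the loop stops. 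Two standard repairs: (a)~apply the same rewriting to \emph{any} pair of indices with $\mathfrak{s}_i\operatorname*{HT}(g_i)=\mathfrak{s}_j\operatorname*{HT}(g_j)$, regardless of whether this common value is $M$, and prove termination by the well-foundedness of the multiset order on $\mathfrak{M}_{\Xi}$ (each step deletes one occurrence of a monomial from the multiset $\{\mathfrak{s}_s\operatorname*{HT}(g_s)\}$ and inserts finitely many strictly smaller ones); this really does end in the pairwise-distinct situation. Or (b)~stop when a unique index $i_0$ attains $M$, note that then (after discarding zero coefficients) $\operatorname*{HT}(p)=M=\mathfrak{s}_{i_0}\operatorname*{HT}(g_{i_0})$, perform the single reduction step $p\underset{G}{\longrightarrow}p-a_{i_0}\mathfrak{s}_{i_0}g_{i_0}$, and run an outer induction on $M$. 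Separately, your appeal to the term order being a well-order on $\mathfrak{M}_{\Xi}$ requires $\Xi$ to be finite (Dickson's lemma); for a general set $\Xi$ you should restrict attention to the finitely many indeterminates occurring in $p$ and in the relevant elements of $G$ --- harmless here, since the application has $\Xi$ finite, but worth saying.
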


Proposition \ref{prop.groebner.buch1} appears (at least in the case when $\mathbf{k}$ is a field) in \cite[Theorem~5.68, (ii)~$\Longleftrightarrow$~(i)]{BecWei98} and \cite[conclusion after the proof of Lemma~1.1.38]{Graaf16}.

We shall also use the following simple fact, known as the ``Macaulay--Buchberger basis theorem'':

\begin{Proposition}\label{prop.groebner.standardbasis}Let $\Xi$, $\mathcal{X}_{\Xi}$ and $\mathfrak{M}_{\Xi}$ be as in Definition \ref{def.groebner.all}. Let $\mathcal{I}$ be an ideal of $\mathcal{X}_{\Xi}$. Let $G$ be a Gr\"{o}bner basis of $\mathcal{I}$. The projections of the $G$-reduced monomials onto the quotient ring $\mathcal{X}_{\Xi}/\mathcal{I}$ form a basis of the $\mathbf{k} $-module $\mathcal{X}_{\Xi}/\mathcal{I}$.
\end{Proposition}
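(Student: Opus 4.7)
The plan is to verify the two defining conditions of a basis separately: that the projections of the $G$-reduced monomials span $\mathcal{X}_\Xi/\mathcal{I}$, and that they are $\mathbf{k}$-linearly independent in $\mathcal{X}_\Xi/\mathcal{I}$. Both will follow from the two equivalent properties in Definition \ref{def.groebner.all}(k), together with one easy auxiliary observation about the reduction relation $\underset{G}{\longrightarrow}$.

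The auxiliary observation is that if $f \underset{G}{\longrightarrow} h$, then $f - h \in \mathcal{I}$. Indeed, by the definition of $\underset{G}{\longrightarrow}$, we have $h = f - a\mathfrak{s}p$ for some $p \in G$, some $\mathfrak{s} \in \mathfrak{M}_\Xi$, and some $a \in \mathbf{k}$; since $G$ generates $\mathcal{I}$, we have $p \in \mathcal{I}$, and therefore $a\mathfrak{s}p \in \mathcal{I}$. Iterating (and invoking reflexivity for the trivial case) yields $f - h \in \mathcal{I}$ whenever $f \overset{\ast}{\underset{G}{\longrightarrow}} h$.

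For spanning, let $\overline{p}$ denote the projection of $p \in \mathcal{X}_\Xi$ onto $\mathcal{X}_\Xi/\mathcal{I}$. By the first of the two equivalent properties in Definition \ref{def.groebner.all}(k), there exists a $G$-reduced $q \in \mathcal{X}_\Xi$ such that $p \overset{\ast}{\underset{G}{\longrightarrow}} q$. The auxiliary observation gives $p - q \in \mathcal{I}$, hence $\overline{p} = \overline{q}$. Writing $q$ as a $\mathbf{k}$-linear combination of $G$-reduced monomials (which is possible by the definition of $G$-reducedness for polynomials) exhibits $\overline{p}$ as a $\mathbf{k}$-linear combination of projections of $G$-reduced monomials.

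For linear independence, suppose $( \mathfrak{m}_i )_{i \in I}$ is a finite family of distinct $G$-reduced monomials and $(c_i)_{i \in I}$ is a family in $\mathbf{k}$ with $\sum_{i \in I} c_i \overline{\mathfrak{m}_i} = 0$ in $\mathcal{X}_\Xi/\mathcal{I}$. Set $p = \sum_{i \in I} c_i \mathfrak{m}_i$; then $p \in \mathcal{I}$, so by the second equivalent Gr\"obner basis property we have $p \overset{\ast}{\underset{G}{\longrightarrow}} 0$. On the other hand, $p$ is itself $G$-reduced, so $p \overset{\ast}{\underset{G}{\longrightarrow}} p$ by reflexivity. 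The uniqueness clause in the first equivalent property forces these two $G$-reduced limits to coincide, giving $p = 0$; since the $\mathfrak{m}_i$ are distinct basis elements of $\mathcal{X}_\Xi$, this forces $c_i = 0$ for every $i \in I$. I expect no serious obstacle here — the main subtlety is simply keeping track that the uniqueness in Definition \ref{def.groebner.all}(k) really applies to a polynomial and not merely to a monomial, so that the single reducedness of $p$ (rather than of each individual $\mathfrak{m}_i$) is what is needed to conclude.
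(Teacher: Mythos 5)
Your proof is correct and is the standard argument: the paper itself gives no proof of this proposition, deferring to its references and to the detailed version, and what appears there is essentially your reasoning (spanning from the existence half of the first property in Definition~\ref{def.groebner.all}(k) plus the observation that each reduction step changes a polynomial by an element of $\mathcal{I}$; independence from the second property combined with the uniqueness half applied to the two $G$-reduced limits $p$ and $0$). One merit worth noting is that your derivation is purely formal in the two equivalent properties of Definition~\ref{def.groebner.all}(k) and never divides by a head coefficient, so it is valid over an arbitrary commutative ring $\mathbf{k}$, which is exactly the generality the paper requires.
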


\begin{proof}See \cite[Chapter 5, Section~3, Propositions~1 and~4]{CoLiOs15} or \cite[Th\'{e}or\`{e}me in the section ``Espaces quotients'']{Monass02} or \cite[Theorem~1.2.6]{Sturmf08} or \cite{verlong}.
\end{proof}

\subsection{The proofs}

The main workhorse of the proofs is the following fact:

\begin{Proposition}\label{prop.forkless.groebner}Consider the inverse lexicographical order on the set $\mathfrak{M}$ of monomials determined by
\begin{gather*}
 x_{1,2}>x_{1,3}>\cdots>x_{1,n} >x_{2,3}>x_{2,4}>\cdots>x_{2,n} >\cdots >x_{n-1,n}.
\end{gather*}
Then, the set
\begin{gather}
\big\{ x_{i,k}x_{i,j}-x_{i,j}x_{j,k}+x_{i,k}x_{j,k}+\beta x_{i,k}+\alpha\,|\, (i,j,k) \in[n] ^{3}\text{ satisfying }i<j<k\big\} \label{eq.prop.forkless.groebner.family}
\end{gather}
is a Gr\"{o}bner basis of the ideal $\mathcal{J}$ of $\mathcal{X}$ $($with respect to this order$)$.
\end{Proposition}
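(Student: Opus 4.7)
The plan is to apply Buchberger's criterion (Proposition \ref{prop.groebner.buch1}). Write
\[
g_{i,j,k} := x_{i,k}x_{i,j} - x_{i,j}x_{j,k} + x_{i,k}x_{j,k} + \beta x_{i,k} + \alpha
\]
for each triple $(i,j,k)$ with $i<j<k$ in $[n]$, and let $G$ denote the set of all such $g_{i,j,k}$, i.e., the set displayed in (\ref{eq.prop.forkless.groebner.family}). First, note that $g_{i,j,k}$ is (up to sign) the defining generator $x_{i,j}x_{j,k} - x_{i,k}(x_{i,j}+x_{j,k}+\beta) - \alpha$ of $\mathcal{J}$, so $G$ generates $\mathcal{J}$. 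Next, under the given inverse lexicographical order one verifies that for $i<j<k$ the inequalities $x_{i,j} > x_{i,k} > x_{j,k}$ hold (a direct consequence of the order $x_{1,2}>x_{1,3}>\cdots>x_{n-1,n}$ on the indeterminates), and a pairwise comparison of the five monomials appearing in $g_{i,j,k}$ then shows that $\operatorname*{HT}(g_{i,j,k}) = x_{i,k}x_{i,j}$ with head coefficient $1$; hence every element of $G$ is monic.

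By Proposition \ref{prop.groebner.buch1}, it then remains to show that $\operatorname*{spol}(g_{i,j,k}, g_{i',j',k'})\overset{\ast}{\underset{G}{\longrightarrow}}0$ for every pair of generators whose head terms are non-disjoint. Each head term $x_{i,k}x_{i,j}$ has both of its variables sharing the first index $i$ (the minimum of the triple), so any shared variable between $\operatorname*{HT}(g_{i,j,k})$ and $\operatorname*{HT}(g_{i',j',k'})$ forces $i=i'$. After setting $i=i'$ and symmetrizing, three nontrivial overlap patterns remain: \textbf{(a)} the two triples are $(i,j,k)$ and $(i,j,l)$ with $k \neq l$; \textbf{(b)} they are $(i,j,k)$ and $(i,j',k)$ with $j \neq j'$; and \textbf{(c)} they are $(i,j,k)$ and $(i,k,l)$, involving four distinct indices $i<j<k<l$.

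For each of these three cases, I would compute the S-polynomial explicitly and then exhibit it as a $\mathbf{k}$-linear combination $\sum_{s} a_{s}\, \mathfrak{s}_{s}\, g_{s}$ of shifts of elements of $G$ whose head monomials $\mathfrak{s}_{s} \operatorname*{HT}(g_{s})$ are pairwise distinct; Lemma \ref{lem.groebner.to0} would then immediately yield $\operatorname*{spol}\overset{\ast}{\underset{G}{\longrightarrow}}0$. The main obstacle is case \textbf{(c)}, where four indices are in play and the reduction appears to require a simultaneous combination of all four generators $g_{i,j,k}$, $g_{i,j,l}$, $g_{i,k,l}$, $g_{j,k,l}$; the required identity is essentially a commutative shadow of the Arnold/Yang--Baxter relation that underlies the whole subdivision algebra, and the careful bookkeeping of cancellations (including of the $\beta$- and $\alpha$-terms) is what makes this step the most delicate. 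Cases \textbf{(a)} and \textbf{(b)}, each involving only three indices, should be significantly cleaner and resolve using only two or three shifts of generators. Once all three cases are verified, Proposition \ref{prop.groebner.buch1} concludes that $G$ is a Gr\"{o}bner basis of $\mathcal{J}$, which is precisely Proposition \ref{prop.forkless.groebner}.
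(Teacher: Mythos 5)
Your setup coincides with the paper's: you invoke Buchberger's criterion (Proposition \ref{prop.groebner.buch1}), correctly identify $\operatorname*{HT}(g_{i,j,k})=x_{i,k}x_{i,j}$ with head coefficient $1$ under this order, observe that non-disjointness of two head terms forces the smallest indices to agree, and isolate the three overlap patterns (a), (b), (c) --- which are exactly the three pairs among the paper's $u_1=g_{a,b,c}$, $u_2=g_{a,b,d}$, $u_3=g_{a,c,d}$ for $a<b<c<d$. So the skeleton is right. The problem is that the proposal stops precisely where the substance of the proof lies: you never actually write any of the three S-polynomials as a combination $\sum_s a_s\mathfrak{s}_s g_s$ with the monomials $\mathfrak{s}_s\operatorname*{HT}(g_s)$ pairwise distinct, which is the hypothesis of Lemma \ref{lem.groebner.to0}. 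These decompositions are not automatic; the paper's reductions for all three cases draw on all four generators $u_1,u_2,u_3,u_4$ (including $u_4=g_{b,c,d}$, whose head term is disjoint from both head terms being resolved) and rest on the syzygy
\begin{gather*}
u_{1}(x_{a,d}-x_{b,d})-u_{2}(x_{a,c}-x_{b,c})-u_{3}(x_{b,c}-x_{b,d})+u_{4}(x_{a,c}-x_{a,d})=0
\end{gather*}
together with two further identities of the same flavour (needed because the first identity alone yields only $\operatorname*{spol}(u_1,u_2)\overset{\ast}{\underset{G}{\longrightarrow}}0$). Producing and verifying these identities, and checking the distinctness of the head monomials of the terms on the right-hand side, is the actual content of the proof; without it you have an outline, not an argument.

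A secondary but telling slip: you assert that cases (a) and (b) ``each involve only three indices'' and should need ``only two or three shifts of generators.'' By your own descriptions, both involve four distinct indices (two distinct triples sharing the minimum and exactly one further index always span four values --- if they shared both, the triples would coincide), so all three cases are structurally on the same footing as your case (c), and none of the paper's three reductions has fewer than six terms. This suggests the computation was not attempted, and it is exactly the step that cannot be waved through.
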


\begin{proof}[Proof (sketched)]The elements $x_{i,k}x_{i,j}-x_{i,j}x_{j,k}+x_{i,k}x_{j,k}+\beta x_{i,k}+\alpha$ of the set~(\ref{eq.prop.forkless.groebner.family}) differ from the designated generators
$x_{i,j}x_{j,k}-x_{i,k} ( x_{i,j}+x_{j,k}+\beta ) -\alpha$ of the ideal $\mathcal{J}$ merely by a factor of $-1$ (indeed, $x_{i,k}x_{i,j}-x_{i,j}x_{j,k}+x_{i,k}x_{j,k}+\beta x_{i,k}+\alpha= ( -1 ) ( x_{i,j}x_{j,k}-x_{i,k} ( x_{i,j}+x_{j,k}+\beta ) -\alpha ) $). Thus, they generate the ideal $\mathcal{J}$. Hence, in order to prove that they form a Gr\"{o}bner basis of $\mathcal{J}$, we merely need to show the following claim:

\begin{Claim}\label{Claim1}
Let $g_{1}$ and $g_{2}$ be two elements of the set~\eqref{eq.prop.forkless.groebner.family} such that the head terms of~$g_{1}$ and~$g_{2}$ are non-disjoint. Then, $\operatorname*{spol} ( g_{1},g_{2} ) \overset{\ast}{\underset{G}{\longrightarrow}}0$, where~$G$ is the set~\eqref{eq.prop.forkless.groebner.family}.
\end{Claim}

(Indeed, proving Claim~\ref{Claim1} is sufficient because of Proposition~\ref{prop.groebner.buch1}.)

In order to prove Claim~\ref{Claim1}, we fix two elements $g_{1}$ and $g_{2}$ of the set~(\ref{eq.prop.forkless.groebner.family}) such that the head terms of~$g_{1}$ and~$g_{2}$ are non-disjoint. Thus,
\begin{gather*}
g_{1}=x_{i_{1},k_{1}}x_{i_{1},j_{1}}-x_{i_{1},j_{1}}x_{j_{1},k_{1}} +x_{i_{1},k_{1}}x_{j_{1},k_{1}}+\beta x_{i_{1},k_{1}}+\alpha
\end{gather*}
for some $ ( i_{1},j_{1},k_{1} ) \in[n] ^{3}$ satisfying $i_{1}<j_{1}<k_{1}$, and
\begin{gather*}
g_{2}=x_{i_{2},k_{2}}x_{i_{2},j_{2}}-x_{i_{2},j_{2}}x_{j_{2},k_{2}} +x_{i_{2},k_{2}}x_{j_{2},k_{2}}+\beta x_{i_{2},k_{2}}+\alpha
\end{gather*}
for some $ ( i_{2},j_{2},k_{2} ) \in[n] ^{3}$ satisfying $i_{2}<j_{2}<k_{2}$. Since the head terms $x_{i_{1},k_{1}}x_{i_{1},j_{1}}$ and $x_{i_{2},k_{2}}x_{i_{2},j_{2}}$ of~$g_{1}$ and~$g_{2}$ are non-disjoint, we must have $i_{1}=i_{2}$. Furthermore, one of~$j_{1}$ and~$k_{1}$ must equal one of $j_{2}$ and $k_{2}$ (for the same reason). Thus, there are at most four distinct integers among $i_{1}$, $i_{2}$, $j_{1}$, $j_{2}$, $k_{1}$, $k_{2}$.

We can now finish off Claim~\ref{Claim1} by straightforward computations, after distinguishing several cases based upon which of the numbers~$j_{1}$ and~$k_{1}$ equal which of the numbers $j_{2}$ and $k_{2}$. We WLOG assume that $( i_{1},j_{1},k_{1}) \neq( i_{2},j_{2},k_{2})$
(since otherwise, it is clear that \smash{$\operatorname*{spol}( g_{1},g_{2}) =0\overset{\ast}{\underset{G}{\longrightarrow}}0$}). Thus, there are \textit{exactly} four distinct integers among $i_{1}$, $i_{2}$, $j_{1}$, $j_{2}$, $k_{1}$, $k_{2}$ (since $i_{1}=i_{2}$, since $i_{1}<j_{1}<k_{1}$ and $i_{2}<j_{2}<k_{2}$, and since one of $j_{1}$ and $k_{1}$ equals one of~$j_{2}$ and~$k_{2}$). Let us denote these four integers by $a$, $b$, $c$, $d$ in increasing order (so that $a<b<c<d$). Hence, $i_{1}=a$ (since $i_{1}<j_{1}<k_{1}$ and $i_{2}<j_{2}<k_{2}$), whereas the two pairs $(j_{1},k_{1}) $ and $( j_{2},k_{2}) $ are two of the three pairs $( b,c) $, $( b,d) $ and $( c,d)$ (for the same reason). Hence, $g_{1}$ and $g_{2}$ are two of the three polynomials
\begin{gather*}
 x_{a,c}x_{a,b}-x_{a,b}x_{b,c}+x_{a,c}x_{b,c}+\beta x_{a,c}+\alpha,\\
x_{a,d}x_{a,b}-x_{a,b}x_{b,d}+x_{a,d}x_{b,d}+\beta x_{a,d}+\alpha,\\
x_{a,d}x_{a,c}-x_{a,c}x_{c,d}+x_{a,d}x_{c,d}+\beta x_{a,d}+\alpha.
\end{gather*}
It thus remains to verify that $\operatorname*{spol} ( g_{1},g_{2}) \overset{\ast}{\underset{G}{\longrightarrow}}0$.

Let us do this. Set
\begin{gather*}
u_{1} =x_{a,c}x_{a,b}-x_{a,b}x_{b,c}+x_{a,c}x_{b,c}+\beta x_{a,c}+\alpha,\\
u_{2} =x_{a,d}x_{a,b}-x_{a,b}x_{b,d}+x_{a,d}x_{b,d}+\beta x_{a,d}+\alpha,\\
u_{3} =x_{a,d}x_{a,c}-x_{a,c}x_{c,d}+x_{a,d}x_{c,d}+\beta x_{a,d}+\alpha,\\
u_{4} =x_{b,c}x_{b,d}-x_{b,c}x_{c,d}+x_{b,d}x_{c,d}+\beta x_{b,d}+\alpha.
\end{gather*}
All four polynomials $u_{1}$, $u_{2}$, $u_{3}$, $u_{4}$ belong to $G$. We shall prove that $\operatorname*{spol}( g_{1},g_{2}) \overset{\ast
}{\underset{G}{\longrightarrow}}0$ whenever~$g_{1}$ and~$g_{2}$ are two of the three polynomials $u_{1}$, $u_{2}$, $u_{3}$. In other words, we shall prove that $\operatorname*{spol} ( u_{1},u_{2} ) \overset{\ast }{\underset{G}{\longrightarrow}}0$, $\operatorname*{spol}( u_{1},u_{3}) \overset{\ast}{\underset{G}{\longrightarrow}}0$ and $\operatorname*{spol}( u_{2},u_{3}) \overset{\ast}{\underset{G}{\longrightarrow}}0$.

Start with the neat identity
\begin{gather*}
u_{1} ( x_{a,d}-x_{b,d} ) -u_{2} ( x_{a,c}-x_{b,c} ) -u_{3} ( x_{b,c}-x_{b,d} ) +u_{4} ( x_{a,c}-x_{a,d} )=0.
\end{gather*}
Expanding and bringing $6$ of the $8$ addends on the right hand side, we obtain
\begin{gather*}
x_{a,d}u_{1}-x_{a,c}u_{2}=-x_{b,c}u_{2}-x_{a,c}u_{4}+x_{b,d}u_{1}+x_{b,c}u_{3}+x_{a,d}u_{4}-x_{b,d}u_{3}.
\end{gather*}
Since the monomials
\begin{gather*}
x_{b,c}\operatorname*{HT} ( u_{2} ) ,\quad x_{a,c}\operatorname*{HT} ( u_{4}) ,\quad x_{b,d}\operatorname*{HT}( u_{1}), \quad x_{b,c}\operatorname*{HT}( u_{3}) ,\quad x_{a,d}\operatorname*{HT}( u_{4}) ,\quad x_{b,d}\operatorname*{HT}( u_{3})
\end{gather*}
are distinct, we thus conclude that $x_{a,d}u_{1}-x_{a,c}u_{2}\overset{\ast }{\underset{G}{\longrightarrow}}0$ (by Lemma~\ref{lem.groebner.to0}). In other words, $\operatorname*{spol}( u_{1},u_{2}) \overset{\ast}{\underset{G}{\longrightarrow}}0$ (since $\operatorname*{spol}(
u_{1},u_{2}) =x_{a,d}u_{1}-x_{a,c}u_{2}$).

Next, observe the identity
\begin{gather*}
x_{a,d}u_{1}-x_{a,b}u_{3}=\beta u_{3}-\beta u_{2}-x_{a,b}u_{4}-x_{b,c} u_{2}+x_{b,c}u_{3}+x_{a,d}u_{4}+x_{c,d}u_{1}-x_{c,d}u_{2}.
\end{gather*}
Since the monomials
\begin{gather*}
 \operatorname*{HT}( u_{3}),\quad \operatorname*{HT}(u_{2}), \quad x_{a,b}\operatorname*{HT}( u_{4}), \quad x_{b,c}\operatorname*{HT}( u_{2}), \quad x_{b,c}\operatorname*{HT}(u_{3}) ,\\
 x_{a,d}\operatorname*{HT}( u_{4}) , \quad x_{c,d}\operatorname*{HT}( u_{1}) ,\quad x_{c,d}\operatorname*{HT}( u_{2})
\end{gather*}
are distinct, we can conclude that $x_{a,d}u_{1}-x_{a,b}u_{3}\overset{\ast}{\underset{G}{\longrightarrow}}0$ (by Lemma~\ref{lem.groebner.to0}). In other words, $\operatorname*{spol}( u_{1},u_{3}) \overset{\ast}{\underset{G}{\longrightarrow}}0$.

Finally, the identity we need for $\operatorname*{spol}( u_{2},u_{3}) \overset{\ast}{\underset{G}{\longrightarrow}}0$ is
\begin{gather*}
x_{a,c}u_{2}-x_{a,b}u_{3}=\beta u_{3}-\beta u_{2}-x_{a,b}u_{4}+x_{a,c}u_{4}-x_{b,d}u_{1}+x_{c,d}u_{1}+x_{b,d}u_{3}-x_{c,d}u_{2}.
\end{gather*}
The same distinctness argument works here.

We have thus proven Claim~\ref{Claim1}. Thus, Proposition \ref{prop.forkless.groebner} is proven.
\end{proof}

\begin{Remark}Proposition \ref{prop.forkless.groebner} can be generalized somewhat. Namely, instead of requiring the total order on~$\mathfrak{M}$ to be inverse lexicographic, it suffices to assume that we are given \textit{any} term order on $\mathfrak{M}$ satisfying the following condition: For every $(i,j,k) \in[n] ^{3}$ satisfying $i<j<k$, we have $x_{i,k}>x_{j,k}$ and $x_{i,j}>x_{j,k}$.

In fact, this condition ensures that the head term of the polynomial $x_{i,k}x_{i,j}-x_{i,j}x_{j,k}+x_{i,k}x_{j,k}+\beta x_{i,k}+\alpha$ (for
$(i,j,k) \in[n] ^{3}$ satisfying $i<j<k$) is $x_{i,k}x_{i,j}$; but this is all that was needed from our term order to make the above proof of Proposition~\ref{prop.forkless.groebner} valid.
\end{Remark}

\begin{proof}[Proof of Proposition \ref{prop.forkless.basis} (sketched)]Let $G$ be the set~(\ref{eq.prop.forkless.groebner.family}). Then, Proposition~\ref{prop.forkless.groebner} shows that $G$ is a Gr\"{o}bner basis of the ideal~$\mathcal{J}$ of $\mathcal{X}$ (where $\mathfrak{M}$ is endowed with the term order defined in Proposition~\ref{prop.forkless.groebner}). Hence, Proposition~\ref{prop.groebner.standardbasis} (applied to $\Xi= \{x_{i,j}\,|\, (i,j) \in[n] ^{2}\text{ satisfying }i<j \} $, $\mathcal{X}_{\Xi}=\mathcal{X}$, $\mathfrak{M}_{\Xi}=\mathfrak{M}$ and $\mathcal{I}=\mathcal{J}$) shows that the projections of the $G$-reduced monomials onto the quotient ring $\mathcal{X}/\mathcal{J}$ form a basis of the $\mathbf{k}$-module~$\mathcal{X}/\mathcal{J}$. Since the $G$-reduced monomials are precisely the forkless monomials, this yields
Proposition~\ref{prop.forkless.basis}.
\end{proof}

\begin{proof}[Proof of Theorem \ref{thm.forkless.span-uni}] Theorem~\ref{thm.forkless.span-uni} is merely a restatement of Proposi\-tion~\ref{prop.forkless.basis}.
\end{proof}

\begin{Remark}\label{rmk.forkless.span-uni.existence}Let us notice that the ``existence'' part of Theorem~\ref{thm.forkless.span-uni} can also be proven similarly to how we proved Proposition~\ref{prop.path-red.span}. This time, we need to define a different notion of ``weight'': Instead of defining the weight of a monomial $\mathfrak{m}=\prod_{\substack{(i,j) \in[n] ^{2};\\i<j}}x_{i,j}^{a_{i,j}}$ to be $\operatorname*{weight}\mathfrak{m}=\sum_{\substack{(i,j) \in[n] ^{2};\\i<j}}a_{i,j}(n-j+i) $, we now must define it to be $\operatorname*{weight}\mathfrak{m}=\sum_{\substack{(i,j) \in[n] ^{2};\\i<j}}a_{i,j}( j-i)$.
\end{Remark}

\begin{question}Is there a similarly simple argument for the ``uniqueness'' part?
\end{question}

\subsection{Dimensions}

The $\mathbf{k}$-module $\mathcal{X}/\mathcal{J}$ is free of infinite rank whenever $n\geq2$; indeed, the basis given in Proposition~\ref{prop.forkless.basis} is infinite (for any $k\in\mathbb{N}$, the monomial $x_{1,2}^{k}$ is forkless). However, $\mathcal{X}/\mathcal{J}$ can be equipped with a filtration, whose filtered parts are of finite rank. Namely, recall that the polynomial ring~$\mathcal{X}$ is graded (by total degree) and thus filtered; this filtration is then inherited by its quotient ring $\mathcal{X}/\mathcal{J}$. For each $k\in\mathbb{N}$, we let $(\mathcal{X}/\mathcal{J}) _{\leq k}$ denote the $k$-th part of the filtration on~$\mathcal{X}/\mathcal{J}$ (that is, the projection onto $\mathcal{X}/\mathcal{J}$ of all polynomials $p\in\mathcal{X}$ of total degree $\leq k$). A~moment of thought reveals that the basis of $\mathcal{X}/\mathcal{J}$ given in Proposition~\ref{prop.forkless.basis} is a filtered basis: For each $k\in\mathbb{N}$, the projections of the forkless monomials $\mathfrak{m}\in\mathfrak{M}$ of total degree $\leq k$ onto the quotient ring
$\mathcal{X}/\mathcal{J}$ form a basis of the $\mathbf{k}$-module $( \mathcal{X}/\mathcal{J})_{\leq k}$. This basis is a finite basis, and so its size is a nonnegative integer. What is this integer?

Of course, it suffices to count the forkless monomials $\mathfrak{m}\in\mathfrak{M}$ of total degree $k$ for each $k\in\mathbb{N}$. This is a~relatively easy counting problem using some classical results \cite[Propositions~1.3.7 and~1.3.10]{Stanley-EC1}; the answer is
the following:

\begin{Proposition}\label{prop.forkless.count}For each $k\in\mathbb{N}$, let $f_{n,k}$ be the number of forkless monomials $\mathfrak{m}\in\mathfrak{M}$ of total degree~$k$. Then,
\begin{gather*}
\sum_{k\in\mathbb{N}}f_{n,k}t^{k}=\frac{ ( 1+0t ) (1+1t ) \cdots ( 1+(n-2) t ) }{(1-t) ^{n-1}}
\end{gather*}
$($as formal power series in $\mathbb{Z}[[t]])$.
\end{Proposition}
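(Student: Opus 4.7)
The plan is to enumerate forkless monomials directly, using the characterization stated just before Theorem~\ref{thm.forkless.span-uni}. That Proposition tells us that $\mathfrak{m} \in \mathfrak{M}$ is forkless if and only if, for each $i \in [n-1]$, at most one of the indeterminates $x_{i,i+1}, x_{i,i+2}, \ldots, x_{i,n}$ actually divides $\mathfrak{m}$. Consequently, a forkless monomial is uniquely encoded by the data $(d_i)_{i \in [n-1]}$, where each $d_i$ is either the symbol $\varnothing$ (meaning no $x_{i,\bullet}$ divides $\mathfrak{m}$) or a pair $(j_i, a_i)$ with $j_i \in \{i+1, i+2, \ldots, n\}$ and $a_i \in \{1, 2, 3, \ldots\}$ (in which case $x_{i, j_i}^{a_i}$ is the full $x_{i,\bullet}$-part of $\mathfrak{m}$). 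Under this encoding, the total degree of $\mathfrak{m}$ equals the sum of the $a_i$ over those $i$ with $d_i \neq \varnothing$.

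Since the data $d_i$ for different $i \in [n-1]$ may be chosen independently, the generating function factors as
\begin{gather*}
\sum_{k \in \mathbb{N}} f_{n,k}\, t^k = \prod_{i=1}^{n-1} \left(1 + (n-i) \sum_{a \geq 1} t^a\right) = \prod_{i=1}^{n-1} \left(1 + \frac{(n-i)\,t}{1-t}\right) = \prod_{i=1}^{n-1} \frac{1 + (n-i-1)\,t}{1 - t}.
\end{gather*}
Re-indexing the numerator by $j = n-i-1$ (so that $j$ runs over $\{0, 1, \ldots, n-2\}$ as $i$ runs over $[n-1]$) gives
\begin{gather*}
\sum_{k \in \mathbb{N}} f_{n,k}\, t^k = \frac{\prod_{j=0}^{n-2}(1 + jt)}{(1-t)^{n-1}} = \frac{(1 + 0t)(1 + 1t) \cdots (1 + (n-2)t)}{(1 - t)^{n-1}},
\end{gather*}
which is the claimed identity.

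There is no real obstacle here: the characterization of forkless monomials (which is the conceptual content) has already been done, and what remains is an independent product of elementary one-variable counts. If desired, the factorization step can be phrased more formally via Stanley's EC1, Propositions~1.3.7 and~1.3.10 (as the paper hints), but the elementary manipulation above is self-contained and makes the bijection explicit.
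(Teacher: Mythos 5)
Your proof is correct and is essentially the argument the paper has in mind (the paper only gestures at it via a citation to Stanley's EC1): the key point is that forklessness decouples the choices for each first index $i$, giving the factor $1+(n-i)\tfrac{t}{1-t}=\tfrac{1+(n-i-1)t}{1-t}$, and the product over $i\in[n-1]$ yields the stated rational function. Your explicit encoding and re-indexing are both accurate, so nothing further is needed.
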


When $\alpha=0$ and $\beta=0$, the ideal $\mathcal{J}$ of $\mathcal{X}$ is homogeneous. Thus, in this case, the quotient $\mathbf{k}$-algebra $\mathcal{X}/\mathcal{J}$ inherits not only a filtration, but also a grading from $\mathcal{X}$; its Hilbert series (with respect to this grading) is the power series $\sum_{k\in\mathbb{N}}f_{n,k}t^{k}$ of Proposition~\ref{prop.forkless.count}.

\begin{question}The filtration on $\mathcal{X}/\mathcal{J}$ considered above is not the only natural one. Another is the filtration by ``weight'' (as in the proof of Proposition \ref{prop.path-red.span}), or by the alternative notion of ``weight'' mentioned in Remark~\ref{rmk.forkless.span-uni.existence}. What are the analogues of Proposition~\ref{prop.forkless.count} for these filtrations?
\end{question}

\section{Further questions}

Let us finally indicate some further directions of research not mentioned so far.

\subsection[The kernel of $A$]{The kernel of $\boldsymbol{A}$}\label{subsect.Qrat}
\begin{question}\label{quest.Ainj}\quad
\begin{enumerate}\itemsep=0pt
\item[(a)] Is $\mathcal{J}$ the kernel of the map $A\colon \mathcal{X}\rightarrow\mathcal{Q}$ from Definition~\ref{def.A}?

\item[(b)] Consider the polynomial ring $\mathbf{k}[ \widetilde{q}_{1},\widetilde{q}_{2},\ldots,\widetilde{q}_{n}] $ in~$n$ indeterminates $\widetilde{q}_{1},\widetilde{q}_{2},\ldots,\widetilde{q}_{n}$ over $\mathbf{k}$. Let~$\mathcal{Q}_{\operatorname*{rat}}$ denote the localization of this polynomial ring at the multiplicative subset gene\-ra\-ted by all differences of the form $\widetilde{q}_{i}-\widetilde{q}_{j}$ (for $1\leq i<j\leq n$). Then, the morphism $A\colon \mathcal{X}\rightarrow\mathcal{Q}$ factors through a $\mathbf{k}$-algebra homomorphism $\widetilde{A}\colon \mathcal{X} \rightarrow\mathcal{Q}_{\operatorname*{rat}}$ which sends each~$x_{i,j}$ to $-\frac{\widetilde{q}_{i}+\beta+\alpha/\widetilde{q}_{j}}{1-\widetilde{q} _{i}/\widetilde{q}_{j}}=-\frac{\widetilde{q}_{i}\widetilde{q}_{j}+\beta\widetilde{q}_{j}+\alpha}{\widetilde{q}_{j}-\widetilde{q}_{i}} \in\mathcal{Q}_{\operatorname*{rat}}$. Is $\mathcal{J}$ the kernel of this latter homomor\-phism~$\widetilde{A}$?
\end{enumerate}
\end{question}

Parts (a) and (b) of Question \ref{quest.Ainj} are equivalent, since the canonical $\mathbf{k}$-algebra homomorphism $\mathcal{Q}_{\operatorname*{rat}}\rightarrow\mathcal{Q}$ is injective. This question is interesting partly because a positive answer to part~(b) would provide a realization of $\mathcal{X}/\mathcal{J}$ as a subalgebra of a~localized polynomial ring in (only) $n$ indeterminates. This subalgebra would probably not be the whole $\mathcal{Q}_{\operatorname*{rat}}$.

(Perhaps it can be shown -- by some kind of multidimensional residues -- that~$A$ maps the forkless monomials in $\mathcal{X}$ to linearly independent elements of~$\mathcal{Q}$. Such a proof would then immediately yield positive answers to parts~(a) and~(b) of Question~\ref{quest.Ainj} as well as an alternative proof of Theorem~\ref{thm.forkless.span-uni}.)

An approach to Question \ref{quest.Ainj}(b) might begin with finding a basis of the $\mathbf{k}$-module $\mathcal{Q}_{\operatorname*{rat}}$. It
turns out that such a basis is rather easy to construct:

\begin{Proposition}\label{prop.Qrat.basis}In $\mathcal{Q}_{\operatorname*{rat}}$, consider the family of all elements of the form $\prod_{i=1}^{n}g_{i}$, where each $g_{i}$ has either the form $\frac{1}{( \widetilde{q}_{i}-\widetilde{q}_{j}) ^{m}}$ for some $j\in\{ i+1,i+2,\ldots,n\} $ and $m>0$ or the form $\widetilde{q}_{i}^{k}$ for some $k\in\mathbb{N}$. This family is a basis of the $\mathbf{k}$-module $\mathcal{Q}_{\operatorname*{rat}}$.
\end{Proposition}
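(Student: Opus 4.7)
The plan is to proceed by induction on $n$. The base case $n=1$ is immediate: then $\mathcal{Q}_{\operatorname*{rat}}=\mathbf{k}[\widetilde{q}_{1}]$ (no differences to invert), and the claimed family reduces to $\{\widetilde{q}_{1}^{k}:k\in\mathbb{N}\}$, which is visibly a $\mathbf{k}$-basis. For the inductive step, let $R$ be the localization of $\mathbf{k}[\widetilde{q}_{2},\ldots,\widetilde{q}_{n}]$ at the multiplicative subset generated by $\widetilde{q}_{i}-\widetilde{q}_{j}$ for $2\leq i<j\leq n$. By the inductive hypothesis, $R$ is a free $\mathbf{k}$-module with the analogous basis, whose elements have the form $\prod_{i=2}^{n}g_{i}$. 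Since $\mathcal{Q}_{\operatorname*{rat}}=R[\widetilde{q}_{1}][(\widetilde{q}_{1}-\widetilde{q}_{2})^{-1},\ldots,(\widetilde{q}_{1}-\widetilde{q}_{n})^{-1}]$, everything reduces to the following partial-fraction lemma: $\mathcal{Q}_{\operatorname*{rat}}$ is a free $R$-module with basis
\[
\mathcal{B}_{1}:=\{\widetilde{q}_{1}^{k}:k\geq0\}\cup\{(\widetilde{q}_{1}-\widetilde{q}_{j})^{-m}:2\leq j\leq n,\ m\geq1\}.
\]
Granted the lemma, the products $g_{1}\cdot\prod_{i=2}^{n}g_{i}$ (with $g_{1}\in\mathcal{B}_{1}$ and $\prod_{i=2}^{n}g_{i}$ ranging over the inductive $\mathbf{k}$-basis of $R$) form a $\mathbf{k}$-basis of $\mathcal{Q}_{\operatorname*{rat}}$, and these are exactly the claimed basis elements.

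For the \emph{spanning} part of the lemma, I would iterate the partial-fraction identity
\[
\frac{1}{(\widetilde{q}_{1}-\widetilde{q}_{i})(\widetilde{q}_{1}-\widetilde{q}_{j})}=\frac{1}{\widetilde{q}_{j}-\widetilde{q}_{i}}\left(\frac{1}{\widetilde{q}_{1}-\widetilde{q}_{j}}-\frac{1}{\widetilde{q}_{1}-\widetilde{q}_{i}}\right)\qquad(i\neq j),
\]
which is legitimate because $\widetilde{q}_{j}-\widetilde{q}_{i}$ is a \emph{unit} in $R$ for distinct $i,j\in\{2,\ldots,n\}$. This decomposes any $\prod_{j}(\widetilde{q}_{1}-\widetilde{q}_{j})^{-m_{j}}$ into an $R$-linear combination of single-factor terms $(\widetilde{q}_{1}-\widetilde{q}_{j})^{-m}$; together with the substitution $\widetilde{q}_{1}=(\widetilde{q}_{1}-\widetilde{q}_{j})+\widetilde{q}_{j}$ to absorb polynomial factors in $\widetilde{q}_{1}$, every element of $\mathcal{Q}_{\operatorname*{rat}}$ becomes an $R$-linear combination of $\mathcal{B}_{1}$.

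For \emph{uniqueness}, note that each $\widetilde{q}_{1}-\widetilde{q}_{j}$ is monic of degree one in $\widetilde{q}_{1}$ over $R$, hence a non-zero-divisor in $R[\widetilde{q}_{1}]$, so the localization map $R[\widetilde{q}_{1}]\to\mathcal{Q}_{\operatorname*{rat}}$ is injective. Suppose $\sum_{k}a_{k}\widetilde{q}_{1}^{k}+\sum_{j,m}b_{j,m}(\widetilde{q}_{1}-\widetilde{q}_{j})^{-m}=0$ in $\mathcal{Q}_{\operatorname*{rat}}$ with $a_{k},b_{j,m}\in R$ almost all zero. Multiplying by $\prod_{j=2}^{n}(\widetilde{q}_{1}-\widetilde{q}_{j})^{M}$ for $M$ larger than every occurring $m$ yields a polynomial identity $P(\widetilde{q}_{1})=0$ in $R[\widetilde{q}_{1}]$. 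For each fixed $j_{0}\in\{2,\ldots,n\}$, expand $P$ as a polynomial in the local parameter $u=\widetilde{q}_{1}-\widetilde{q}_{j_{0}}$: the contributions from $\sum_{k}a_{k}\widetilde{q}_{1}^{k}\prod_{j}(\widetilde{q}_{1}-\widetilde{q}_{j})^{M}$ and from the $b_{j',m}$-terms with $j'\neq j_{0}$ are all divisible by $u^{M}$, while the $b_{j_{0},m}$-terms contribute in degrees $\ell<M$ a triangular linear system for $b_{j_{0},M},b_{j_{0},M-1},\ldots,b_{j_{0},1}$ with diagonal entry $\prod_{j\neq j_{0}}(\widetilde{q}_{j_{0}}-\widetilde{q}_{j})^{M}$, which is a unit in $R$. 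Back-substitution forces $b_{j_{0},m}=0$ for all $m$ and every $j_{0}$; the residual identity $\sum_{k}a_{k}\widetilde{q}_{1}^{k}\prod_{j}(\widetilde{q}_{1}-\widetilde{q}_{j})^{M}=0$ then gives $\sum_{k}a_{k}\widetilde{q}_{1}^{k}=0$ in $R[\widetilde{q}_{1}]$ (again because the leading factor is monic), whence $a_{k}=0$ for all $k$.

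The main obstacle is proving uniqueness over an \emph{arbitrary} commutative ring $\mathbf{k}$: one cannot simply embed $\mathcal{Q}_{\operatorname*{rat}}$ into a field of rational functions (since $\mathbf{k}$ may have zero-divisors). The triangular-system argument sketched above sidesteps this by exploiting two ring-theoretic facts tailored to our setting---namely that each $\widetilde{q}_{1}-\widetilde{q}_{j}$ is monic in $\widetilde{q}_{1}$ over $R$ (hence a non-zero-divisor in $R[\widetilde{q}_{1}]$), and that the cross-differences $\widetilde{q}_{j_{0}}-\widetilde{q}_{j}$ are actual units in $R$ for $j_{0}\neq j$ in $\{2,\ldots,n\}$.
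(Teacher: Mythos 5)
Your proposal is correct and follows essentially the same route as the paper: induction on $n$, realizing $\mathcal{Q}_{\operatorname*{rat}}$ as the localization of $R[\widetilde{q}_{1}]$ (with $R=\mathcal{Q}_{\operatorname*{rat},n-1}$) at the elements $\widetilde{q}_{1}-\widetilde{q}_{j}$, and reducing to a partial-fraction freeness lemma whose key inputs are that the $\widetilde{q}_{1}-\widetilde{q}_{j}$ are monic over $R$ and the cross-differences $\widetilde{q}_{i}-\widetilde{q}_{j}$ ($i,j\geq2$) are units — this lemma is exactly the paper's Proposition~\ref{prop.Qrat.indstep}. Your execution of the spanning and uniqueness steps fills in details the paper only hints at, but the underlying argument is the same.
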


Notice that this family is similar to the forkless monomials in Proposition~\ref{prop.forkless.basis}, but it is ``larger'' (if we would allow the $g_{i}$ to have the form $\widetilde{q}_{i}^{k}$ only for $k=0$, then we would obtain a~restricted family that would be in an obvious bijection with the forkless monomials).

Proposition \ref{prop.Qrat.basis} is closely related to results by Horiuchi and Terao~\cite{HorTer03,Terao02}; indeed, if~$\mathbf{k}$ is a~field, then $\mathcal{Q}_{\operatorname*{rat}}$ can be regarded as the ring of regular functions on the complement of the braid arrangement in $\mathbf{k}^{n}$, and such functions are what they have studied (although usually not the whole $\mathcal{Q}_{\operatorname*{rat}}$). Notice however that they worked only over fields~$\mathbf{k}$ of characteristic~$0$.

Let us only briefly hint to how Proposition \ref{prop.Qrat.basis} is proven; the details shall be deferred to future work. We can construct $\mathcal{Q}_{\operatorname*{rat}}$ recursively: For any $n>0$, we can first construct the $\mathbf{k}$-algebra~$\mathcal{Q}_{\operatorname*{rat},n-1}$ defined as the localization of the polynomial ring $\mathbf{k}[ \widetilde{q}_{2},\widetilde{q}_{3},\ldots,\widetilde{q}_{n}] $ at the multiplicative subset generated by all differences of the form $\widetilde{q}_{i}-\widetilde{q}_{j}$ (for $2\leq i<j\leq n$); then, $\mathcal{Q}_{\operatorname*{rat}}$ is isomorphic to the localization of the polynomial ring $\mathcal{Q}_{\operatorname*{rat},n-1} [ \widetilde{q}_{1} ] $ at the multiplicative subset generated by all differences of the form $\widetilde{q}_{1}-\widetilde{q_{j}}$ (for $2\leq j\leq n$). Thus, Proposition~\ref{prop.Qrat.basis} can be proven by induction over~$n$, using the following fact:

\begin{Proposition} \label{prop.Qrat.indstep}Let $A$ be a commutative ring. Let $f_{1},f_{2},\ldots,f_{n}$ be $n$ elements of~$A$. Assume that for each $1\leq i<j\leq n$, the element $f_{i}-f_{j}$ of $A$ is invertible. Let~$B$ be the localization of the polynomial ring $A[x] $ at the multiplicative subset generated by all differences of the form $x-f_{j}$ $($for $1\leq j\leq n)$. Then, $B$ is a free $A$-module, with a~basis consisting of the following elements:
\begin{itemize}\itemsep=0pt
\item all elements of the form $\frac{1}{( x-f_{j}) ^{m}}$ for $j\in \{ 1,2,\ldots,n \} $ and $m>0$;
\item all elements of the form $x^{k}$ for $k\in\mathbb{N}$.
\end{itemize}
\end{Proposition}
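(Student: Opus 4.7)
My plan is to verify separately that the proposed family spans $B$ as an $A$-module and is $A$-linearly independent.

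For spanning, I would note that each element of $B$ has the form $p(x)/\prod_{j=1}^n (x-f_j)^{m_j}$ for some $p \in A[x]$ and nonnegative integers $m_j$. I would invoke the partial-fraction identity
\begin{gather*}
\frac{1}{(x-f_i)(x-f_j)} = (f_i - f_j)^{-1}\left(\frac{1}{x-f_i} - \frac{1}{x-f_j}\right),
\end{gather*}
valid because $f_i - f_j$ is invertible, and iterate it to rewrite $1/\prod_j (x-f_j)^{m_j}$ as an $A$-linear combination of reciprocals $1/(x-f_j)^m$. Multiplying through by the numerator, it then suffices to decompose each term $p(x)/(x-f_j)^m$. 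For this, I would Taylor-expand $p(x)$ around $f_j$ as $p(x) = \sum_k c_k (x-f_j)^k$ with $c_k \in A$ (which exists and is unique over any commutative ring, since $A[x] = A[x-f_j]$), and split the sum at $k = m$: terms with $k \geq m$ contribute a polynomial part in $x$, while terms with $k < m$ contribute $c_k/(x-f_j)^{m-k}$.

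For linear independence, I would suppose that
\begin{gather*}
P(x) + \sum_{j=1}^n \sum_{m=1}^{M_j} \frac{b_{j,m}}{(x-f_j)^m} = 0 \qquad \text{in } B
\end{gather*}
for some $P \in A[x]$ and $b_{j,m} \in A$, and clear denominators by multiplying by $\prod_j (x-f_j)^{M_j}$. This is legitimate because each $x - f_j$ is monic, hence a non-zerodivisor in $A[x]$. The result is an identity in $A[x]$:
\begin{gather*}
P(x) \prod_{j=1}^n (x-f_j)^{M_j} + \sum_{j=1}^n \sum_{m=1}^{M_j} b_{j,m} (x-f_j)^{M_j-m} \prod_{j' \neq j} (x-f_{j'})^{M_{j'}} = 0.
\end{gather*}
For each fixed $l \in \{1, 2, \ldots, n\}$, I would Taylor-expand every term around $f_l$. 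The polynomial $\prod_{j' \neq l} (x-f_{j'})^{M_{j'}}$ has constant term $\prod_{j' \neq l}(f_l - f_{j'})^{M_{j'}}$, which is invertible in $A$ by hypothesis. Both the $P$-summand and the summands with $j \neq l$ are divisible by $(x-f_l)^{M_l}$, so their Taylor expansions at $f_l$ begin in degree $\geq M_l$; hence the Taylor coefficients of $(x-f_l)^s$ for $s < M_l$ receive contributions only from the $j = l$ summands. These coefficients form a triangular linear system in $b_{l, M_l}, b_{l, M_l - 1}, \ldots, b_{l, 1}$ whose diagonal entries are all equal to the invertible $\prod_{j' \neq l}(f_l - f_{j'})^{M_{j'}}$, so descending induction on $m$ forces $b_{l, m} = 0$ for all $m$. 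Letting $l$ vary kills every $b_{j, m}$; the leftover identity $P(x) \prod_j(x-f_j)^{M_j} = 0$ in $A[x]$ then forces $P = 0$, since the product is monic.

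The main obstacle I anticipate is the bookkeeping in the linear independence step: one must check carefully that each summand with $j \neq l$ really contributes nothing to the Taylor coefficients of degree $< M_l$ at $f_l$, and that the triangular system one extracts has an invertible diagonal entry at each level. Both points rest on the hypothesis that the differences $f_i - f_j$ are invertible. Everything else reduces to the observation that the $x - f_j$ are monic in $A[x]$, so that partial fractions, Taylor expansions, and clearing denominators all behave over an arbitrary commutative ring $A$ exactly as they do over a field.
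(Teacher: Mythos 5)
Your proposal is correct and follows exactly the route the paper itself only sketches: the partial-fraction identity $\frac{1}{(x-f_i)(x-f_j)}=(f_i-f_j)^{-1}\bigl(\frac{1}{x-f_i}-\frac{1}{x-f_j}\bigr)$ is precisely the comaximality of the ideals $(x-f_i)A[x]$ and $(x-f_j)A[x]$ that the paper cites as the key point, and your spanning and independence arguments (Taylor expansion at each $f_l$, triangularity with invertible diagonal, monicity of the $x-f_j$ to justify clearing denominators) are the correct adaptation of the field-case proof that the paper defers. No gaps.
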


Proposition \ref{prop.Qrat.indstep} is essentially a form of partial fraction decomposition, saying that any element of $B$ can be uniquely written as an $A$-linear combination of elements of the form $\frac{1}{(x-f_{j}) ^{m}}$ for $j\in \{ 1,2,\ldots,n\} $ and $m>0$, plus a polynomial in $A[x] $. This can be proven by thoroughly analyzing the corresponding proof in the case when $A$ is a field; the
invertibility of the differences $f_{i}-f_{j}$ is actually what is needed here (since it entails that the ideals $ ( x-f_{j} ) A [ x]
$ of $A[x] $ for $j\in \{ 1,2,\ldots,n \} $ are pairwise comaximal).

\subsection[Isomorphisms between $\mathcal{X}/\mathcal{J}$ for different $\alpha$, $\beta$]{Isomorphisms between $\boldsymbol{\mathcal{X}/\mathcal{J}}$ for different $\boldsymbol{\alpha}$, $\boldsymbol{\beta}$}

Let us now rename the ideal $\mathcal{J}$ as $\mathcal{J}_{\beta,\alpha}$, in order to stress its dependence on $\beta$ and $\alpha$.

\begin{question} When are the $\mathbf{k}$-algebras $\mathcal{X}/\mathcal{J}_{\beta,\alpha}$ for different choices of $\alpha$ and $\beta$ isomorphic?
\end{question}

For $n=2$, the answer is clearly ``always'', because $\mathcal{X}/\mathcal{J}_{\beta,\alpha}$ does not depend on $\alpha$ and $\beta$ in this case (in fact, $\mathcal{J}_{\beta,\alpha}=0$ when $n=2$). So the question only becomes interesting for $n\geq3$. The answer may well depend on the base ring $\mathbf{k}$, and it is perhaps reasonable to assume that $\mathbf{k}$ is a field here. It is easy to come up with an example where the $\mathbf{k}$-algebras $\mathcal{X}/\mathcal{J}_{\beta,\alpha}$ for different choices of $\alpha$ and $\beta$ are not isomorphic\footnote{For
example, let $\mathbf{k}$ be a field of characteristic $\neq2$, and let $n=3$. Then, $\mathcal{X}/\mathcal{J}_{\beta,\alpha}$ is the quotient of the polynomial ring $\mathcal{X}=\mathbf{k}[ x_{1,2},x_{1,3},x_{2,3}] $ by the principal ideal generated by $x_{1,2}x_{2,3}-x_{1,3}(
x_{1,2}+x_{2,3}+\beta) -\alpha$.

We claim that any $\mathbf{k}$-algebra $\mathcal{X}/\mathcal{J}_{\beta,\alpha }$ for $4\alpha=\beta^{2}$ is non-isomorphic to any $\mathbf{k}$-algebra $\mathcal{X}/\mathcal{J}_{\beta,\alpha}$ for $4\alpha\neq\beta^{2}$.

To see this, it suffices to show that the $\mathbf{k}$-algebra $\mathcal{X}/\mathcal{J}_{\beta,\alpha}$ has a ``$\mathbf{k}$-valued singular point'' (i.e., a $\mathbf{k}$-algebra homomorphism $\varepsilon\colon \mathcal{X}/\mathcal{J}_{\beta,\alpha}\rightarrow\mathbf{k}$ such that there exist three $\mathbf{k}$-linearly independent $(\varepsilon,\varepsilon) $-derivations $\mathcal{X}/\mathcal{J}_{\beta,\alpha}\rightarrow\mathbf{k}$, where an $( \varepsilon,\varepsilon)$\textit{-derivation} means a~$\mathbf{k}$-linear map $\partial\colon \mathcal{X}/\mathcal{J}_{\beta,\alpha}\rightarrow\mathbf{k}$ satisfying $\partial ( fg ) =\partial(f) \varepsilon(g) +\varepsilon(f) \partial (g) $ for all~$f$,~$g$) if and only if $4\alpha=\beta^{2}$. But this is easily verified.}. The following example should stress that isomorphisms nevertheless can exist:

\begin{Example}\quad
\begin{enumerate}\itemsep=0pt
\item[(a)] Let $\gamma\in\mathbf{k}$. The $\mathbf{k}$-algebra isomorphism $\mathcal{X}\rightarrow\mathcal{X}$, $x_{i,j}\mapsto\gamma-x_{i,j}$ descends to a $\mathbf{k}$-algebra isomorphism $\mathcal{X}/\mathcal{J}_{\beta,\alpha }\rightarrow\mathcal{X}/\mathcal{J}_{\beta+2\gamma,\alpha+\beta\gamma +\gamma^{2}}$.
\item[(b)] Let $\rho\in\mathbf{k}$ be invertible. The $\mathbf{k}$-algebra isomorphism $\mathcal{X}\rightarrow\mathcal{X}$, $x_{i,j}\mapsto\rho x_{i,j}$ descends to a~$\mathbf{k}$-algebra isomorphism $\mathcal{X}/\mathcal{J}_{\rho\beta,\rho^{2}\alpha}\rightarrow\mathcal{X}/\mathcal{J}_{\beta,\alpha}$.
\end{enumerate}
\end{Example}

\subsection{A deformation of the Orlik--Terao algebra?}\label{subsect.arnold}

We have already seen in Section \ref{subsect.Qrat} that the $\mathbf{k}$-algebra $\mathcal{X}/\mathcal{J}$ is closely connected to the localization $\mathcal{Q}_{\operatorname*{rat}}$ from Question~\ref{quest.Ainj}. In the parlance of algebraic geometers, $\mathcal{Q}_{\operatorname*{rat}}$ is the coordinate ring of the complement of the braid arrangement in $\mathbf{k}^{n}$. This complement has been the subject of a classical paper by Arnold~\cite{Arnold71}, which discussed its cohomology ring. Arnold's description of this cohomology ring is remarkably similar to our definition of~$\mathcal{X}/\mathcal{J}$ in the case when $\beta=0$ and $\alpha=0$. Namely, Arnold
considers the exterior (i.e., free anticommutative) algebra~$A(n) $ in~$\binom{n}{2}$ indeterminates $\omega_{i,j}$ for $1\leq i<j\leq n$ up to the relations $\omega_{i,j}\omega_{j,k}+\omega_{j,k}\omega_{k,i}+\omega_{k,i}\omega_{i,j}=0$, where~$\omega_{u,v}$ for $u>v$ is defined to be a synonym for $\omega_{v,u}$. He gives a basis \cite[Corollary~3]{Arnold71} of this $\mathbf{k}$-module $A(n) $, which is almost exactly the same as our basis of forkless monomials for~$\mathcal{X}/\mathcal{J}$ (with the difference, of course, that his monomials are squarefree because they live in an exterior algebra, and that his choice of order is different).

Arnold's algebra $A(n) $ has since been significantly generalized. Namely, to every matroid corresponds an \textit{Orlik--Solomon algebra}~\cite{CorEti01}; this recovers the algebra $A(n) $ when the matroid is the graphical matroid of the complete graph $K_{n}$. Seeing that the subdivision algebra $\mathcal{X}/\mathcal{J}$ can be viewed as a~commutative analogue of $A(n) $, we can thus ask for a similar commutative analogue of an arbitrary Orlik--Solomon algebra.

Such an analogue, too, is known \cite{SchToh15}: it is the \textit{Orlik--Terao algebra} of a finite family of vectors. This generalizes~$\mathcal{X}/\mathcal{J}$ in the case when $\beta=0$ and $\alpha=0$. We may thus regard $\mathcal{X}/\mathcal{J}$ as a deformation of a specific Orlik--Terao algebra, and ask for a generalization:

\begin{question}\quad
\begin{enumerate}\itemsep=0pt
\item[(a)] Can an arbitrary Orlik--Terao algebra be deformed by two parameters~$\beta$ and~$\alpha$, generalizing our $\mathcal{X}/\mathcal{J}$? A deformation by one parameter~$\hbar$ (which we suspect to correspond to our $\mathcal{X}/\mathcal{J}$ for the braid arrangement with $\alpha=0$) has been studied by McBreen and Proudfoot in \cite[Appendix~A.2]{McbPro15} at least in the case of a~unimodular family of vectors.
\item[(b)] Does Theorem \ref{thm.t-red.unique} extend to Orlik--Terao algebras?
\end{enumerate}
\end{question}

\looseness=-1 Note that our basis of forkless monomials for $\mathcal{X}/\mathcal{J}$ can be regarded as an ``\textit{nbc} basis'' in the sense of~\cite{CorEti01} (except that our monomials are not required to be squarefree). Indeed, if we totally order the monomials $x_{i,j}$ in such a~way that $x_{i,j}>x_{u,v}$ whenever $i<u$, then the broken circuits of the graphical matroid of $K_{n}$ are precisely the sets of the form $\{\{i,j\} ,\{i,k\}\} $ for $i<j<k$; but these correspond to the precise monomials $x_{i,j}x_{i,k}$ that a forkless monomial cannot be divisible~by. Proudfoot's and Speyer's \cite[Theorem~4]{ProSpe06} leads to a similar basis for Orlik--Terao algebras of arbitrary hyperplane arrangements, and \cite[Theorem~A.9]{McbPro15} extends this to its one-parameter deformation for unimodular arrangements. We may still ask similar questions about oriented matroids not coming from hyperplane arrangements, and we may also ask for combinatorial proofs. Horiuchi's and Terao's works~\cite{HorTer03} and~\cite{Terao02} seem relevant once again.

We end with an overview of algebras similar to $\mathcal{X}/\mathcal{J}$ that have appeared in the literature, making no claims of completeness. See also the last few paragraphs of the Introduction of~\cite{Kirill16} for a~history of these algebras.
\begin{itemize}\itemsep=0pt
\item As mentioned above, in \cite{Arnold71}, Arnold introduced the noncommutative algebra $A(n) $ with anticommuting generators $\omega_{i,j}$ (for $1\leq i<j\leq n$) and relations\footnote{Anticommutativity of the generators means that $\omega_{i,j}\omega_{u,v}=-\omega_{u,v}\omega_{i,j}$ for all $i<j$ and $u<v$, and that $\omega_{i,j}^{2}=0$ for all $i<j$.}
\begin{gather*}
\omega_{i,j}\omega_{j,k}+\omega_{j,k}\omega_{i,k}+\omega_{i,k}\omega _{i,j}=0\qquad \text{for} \quad 1\leq i<j<k\leq n.
\end{gather*}
This was probably the first algebra of this kind to be defined. Note that the relations can be rewritten in the form%
\begin{gather*}
\omega_{i,j}\omega_{j,k}=-\omega_{i,k} ( \omega_{i,j}+\omega _{j,k} )
\end{gather*}
to reveal the similarity to the generators of $\mathcal{J}$, but Arnold's algebra does not include the two ``deforming'' parameters $\alpha$ and $\beta$ of our $\mathcal{X}/\mathcal{J}$. Arnold showed that $A(n) $ is isomorphic to the (integer) cohomology ring of the space
\begin{gather*}
\big\{ ( z_{1},z_{2},\ldots,z_{n} ) \in\mathbb{C}^{n}\,|\, z_{1},z_{2},\ldots,z_{n}\text{ are distinct}\big\},
\end{gather*}
and found a $\mathbf{k}$-linear basis of $A(n) $. (Note that he has been working with $\mathbf{k}=\mathbb{Z}$, but this clearly yields the
same results for all $\mathbf{k}$.)

This algebra $A(n) $ has later been generalized to the Orlik--Solomon algebra of an arbitrary hyperplane arrangement, and more generally of an arbitrary matroid (see, e.g.,~\cite{Yuzvin01} for an exposition of the arrangement case); the algebra $A(n) $ is recovered by taking the braid arrangement.

\item In \cite{VarGel87}, Gelfand and Varchenko have introduced a commutative counterpart of the Orlik--Solomon algebra of a hyperplane arrangement $S$. This algebra $P$ is generated by the constant function~$1$ and the Heaviside functions of the hyperplanes in the arrangement. If~$S$ is the braid arrangement, then this algebra~$P$ is isomorphic to the algebra~$P(n) $ with generators~$x_{i,j}$ (for $1\leq i<j\leq n$) and relations
\begin{gather*}
x_{i,j}^{2} =x_{i,j} \qquad \text{for} \quad 1\leq i<j\leq n, \qquad \text{and}\\
x_{i,j}x_{j,k} ( x_{i,k}-1 ) - ( x_{i,j}-1 ) (x_{j,k}-1) x_{i,k} =0 \qquad \text{for} \quad 1\leq i<j<k\leq n.
\end{gather*}
(On the nose, they require many more relations, corresponding to all circuits of~$S$; we are using the nontrivial fact that the $3$-circuits suffice.) The latter of these relations rewrites as
\begin{gather*}
x_{i,j}x_{j,k}=x_{i,k} ( x_{i,j}+x_{j,k}-1 ) ,
\end{gather*}
which is exactly one of the generators of $\mathcal{J}$ when $\beta=1$ and $\alpha=0$. However, the additional relations $x_{i,j}^{2}=x_{i,j}$ make their algebra~$P(n) $ finite-dimensional as a $\mathbf{k}$-module (unlike our~$\mathcal{X}/\mathcal{J}$). Gelfand and Varchenko find a basis of~$P$ (at least in the case when $\mathbf{k}=\mathbb{C}$), defined in terms of what they call ``open cycles'' (and is nowadays known as broken circuits). If~$S$ is the braid arrangement, and if an appropriate ordering of the hyperplanes is used, then this basis becomes similar to our basis of forkless monomials (Proposition~\ref{prop.forkless.basis}), except that it only contains the squarefree forkless monomials (as the $x_{i,j}^{2}=x_{i,j}$ relations render all other monomials redundant).

This algebra $P$ can be straightforwardly generalized to arbitrary oriented matroids.

\item The Gelfand--Varchenko algebra $P(n) $ is filtered, and its associated graded algebra is the commutative algebra $Q(n)$ with generators $x_{i,j}$ (for $1\leq i<j\leq n$) and relations
\begin{gather*}
x_{i,j}^{2} =0\qquad \text{for} \quad 1\leq i<j\leq n, \qquad \text{and}\\
x_{i,j}x_{j,k} =x_{i,k} ( x_{i,j}+x_{j,k} ) \qquad \text{for} \quad 1\leq i<j<k\leq n.
\end{gather*}
The latter relations are exactly the generators of $\mathcal{J}$ when $\beta=0$ and $\alpha=0$. This connection is explored, e.g., in Moseley's~\cite{Mosele12} (although he imposes a much larger set of relations); again, this $\mathbf{k}$-algebra is a finite-dimensional $\mathbf{k}$-module with a~``broken circuit'' basis. The algebra~$Q(n) $ also appears (as~$A_{n}$) in Mathieu's \cite[Section~6]{Mathie95}.

Again, this generalizes to an arbitrary hyperplane arrangement, yielding what is called its Artinian Orlik--Terao algebra (the algebra $W(
\mathcal{A}) $ in~\cite{OrlTer94}).

\item In \cite{OrlTer94}, Orlik and Terao assign an algebra $\mathbf{K} \big[\alpha_{\mathcal{A}}^{-1}\big] $ to any hyperplane arrangement
$\mathcal{A}$ in a finite-dimensional $\mathbf{K}$-vector space $V$, where $\mathbf{K}$ is any field. Nowadays known as the (big) Orlik--Terao algebra, it is simply the $\mathbf{K}$-subalgebra of the ring of rational functions on~$V$ generated by the reciprocals of the linear forms whose kernels are the hyperplanes of~$\mathcal{A}$. When~$\mathcal{A}$ is the braid arrangement, this $\mathbf{K}$-algebra is isomorphic to the commutative $\mathbf{K}$-algebra $R(n) $ with generators $x_{i,j}$ (for $1\leq i<j\leq n$) and relations
\begin{gather*}
x_{i,j}x_{j,k}=x_{i,k} ( x_{i,j}+x_{j,k} ) \qquad \text{for} \quad 1\leq i<j<k\leq n.
\end{gather*}
The relations here are exactly the generators of $\mathcal{J}$ when $\beta=0$ and $\alpha=0$. Unlike the previous algebras, this one is no longer finite-dimensional, thus being the closest one so far to $\mathcal{X}/\mathcal{J}$. A basis of the (big) Orlik--Terao algebra has been found by Proudfoot and Speyer in~\cite{ProSpe06}.

\item Kirillov, in \cite[Section~4]{Kirill97}, introduces a noncommutative algebra $\mathcal{G}_{n}$ with generators $[i,j] $ (for $1\leq i<j\leq n$) and relations%
\begin{gather}
[i,j] [j,k] =[j,k] [i,k] +[i,k] [i,j]\qquad \text{for} \quad 1\leq i<j<k\leq n,\label{eq.context.Gn.rel1}\\
[j,k] [i,j] =[i,k] [j,k] +[i,j] [i,k] \qquad \text{for} \quad 1\leq i<j<k\leq n,\label{eq.context.Gn.rel2}\\
[i,j][k,l] = [ k,l] [i,j] \qquad \text{for} \quad i<j \quad \text{and} \quad k<l \quad \text{with} \quad \{i,j\}
\cap \{ k,l \} =\varnothing .\nonumber
\end{gather}
Note that the abelianization of this $\mathcal{G}_{n}$ is $R(n)$. Kirillov states (without proof) a basis of~$\mathcal{G}_{n}$ in
\cite[Theorem~4.3]{Kirill97}, which (under abelianization) transforms into the basis from our Proposition~\ref{prop.forkless.basis}.

\item In \cite[Definition 10.5]{Kirill97}, Kirillov goes on to deform the algebra $\mathcal{G}_{n}$, replacing (\ref{eq.context.Gn.rel1}) and~(\ref{eq.context.Gn.rel2}) by
\begin{gather*}
[i,j] [j,k] =[j,k] [i,k] +[i,k] [i,j] +\beta[i,k] \qquad \text{for} \quad 1\leq i<j<k\leq n,\\
[j,k] [i,j] =[i,k] [j,k] +[i,j] [i,k] +\beta[i,k] \qquad \text{for} \quad 1\leq i<j<k\leq n,
\end{gather*}
where $\beta\in\mathbf{k}$ is fixed. He denotes this algebra by $\mathcal{L}_{n,\beta}$, but leaves its properties to further study.

\item The quasi-classical Yang--Baxter algebra $\mathcal{B}(A_{n}) $ is the noncommutative $\mathbf{k}$-algebra with generators~$x_{i,j}$ (for $1\leq i<j\leq n$) and relations
\begin{gather*}
x_{i,j}x_{j,k} =x_{i,k}x_{i,j}+x_{j,k}x_{i,k}+\beta x_{i,k} \qquad \text{for} \quad 1\leq i<j<k\leq n,\\
x_{i,j}x_{k,l} =x_{k,l}x_{i,j} \qquad \text{for} \quad i<j \quad \text{and} \quad k<l \quad \text{with} \quad \{i,j\}\cap\{ k,l\}=\varnothing.
\end{gather*}
Note that this is not the same as $\mathcal{L}_{n,\beta}$, since the second relation of $\mathcal{L}_{n,\beta}$ is missing here. This algebra
$\mathcal{B}(A_{n}) $ was also introduced by Kirillov (according to~\cite{Meszar09}).

\item M\'{e}sz\'{a}ros, in \cite{Meszar09}, studies the abelianization of $\mathcal{B}(A_{n}) $; this is also the abelianization of~$\mathcal{L}_{n,\beta}$. This is the $\mathbf{k}$-algebra $\mathcal{X}/\mathcal{J}$ for $\alpha=0$.

\item In \cite[Definition 5.1]{Kirill16}, Kirillov starts with two parameters $\alpha,\beta\in\mathbf{k}$ and defines the noncommutative $\mathbf{k}$-algebra $\widehat{\operatorname*{ACYB}}_{n} ( \alpha,\beta ) $, which has generators $x_{i,j}$ (for $1\leq i<j\leq n$) and relations%
\begin{gather*}
x_{i,j}x_{j,k} =x_{i,k}x_{i,j}+x_{j,k}x_{i,k}+\beta x_{i,k}+\alpha \qquad \text{for} \quad 1\leq i<j<k\leq n,\\
x_{i,j}x_{k,l}=x_{k,l}x_{i,j} \qquad \text{for} \quad i<j \quad \text{and} \quad k<l \quad \text{with} \quad \{i,j\}\cap\{k,l\} =\varnothing.
\end{gather*}
This algebra deforms $\mathcal{B}(A_{n}) $; its abelianization is our $\mathcal{X}/\mathcal{J}$.
\end{itemize}

\begin{question}Which of these algebras satisfy an analogue of Theorem~\ref{thm.t-red.unique}?
\end{question}

\subsection{Final questions}

Finally, we pose two lateral but (in our view) equally interesting questions about $\mathcal{X}/\mathcal{J}$.

The first question, suggested by a referee, concerns the geometric background of M\'{e}sz\'{a}ros's work. As mentioned in the Introduction, the algebra $\mathcal{X}/\mathcal{J}$ generalizes M\'{e}sz\'{a}ros's ``subdivision algebra'' $\mathcal{S}(A_{n}) $ from~\cite{Meszar09}. The latter owes its name to a geometric interpretation of the relations $x_{i,j}x_{j,k}=x_{i,k}x_{i,j}+x_{i,k}x_{j,k}-x_{i,k}$ that hold in $\mathcal{X}/\mathcal{J}$ when $\beta=-1$ and $\alpha=0$. For example, if we consider the standard basis $( e_{1},e_{2},\ldots,e_{n})$ of $\mathbb{R}^{n}$, then the cone $ \langle e_{i}-e_{j},e_{j}-e_{k}\rangle _{+}$ (where $\langle u_{1},u_{2},\ldots,u_{p}\rangle _{+}$ means the cone spanned by $p$ vectors $u_{1},u_{2},\ldots,u_{p}$) is the union of the two cones $\langle e_{i}-e_{k},e_{i}-e_{j}\rangle _{+}$ and $\langle e_{i}-e_{k},e_{j}-e_{k}\rangle _{+}$, while the intersection of the latter two cones is $\langle e_{i}-e_{k}\rangle _{+}$. Thus, the indicator
functions of these cones satisfy
\begin{gather*}
\mathbf{1}_{\langle e_{i}-e_{j},e_{j}-e_{k}\rangle _{+}}=\mathbf{1}_{\langle e_{i}-e_{k},e_{i}-e_{j}\rangle _{+}
}+\mathbf{1}_{\langle e_{i}-e_{k},e_{j}-e_{k}\rangle _{+}}-\mathbf{1}_{\langle e_{i}-e_{k}\rangle _{+}}%
\end{gather*}
(with $\mathbf{1}_{P}$ denoting the indicator function of a polyhedron $P$), which is reminiscent of our relation $x_{i,j}x_{j,k} =x_{i,k}x_{i,j}+x_{i,k}x_{j,k}-x_{i,k}$. As M\'{e}sz\'{a}ros showed in~\cite{Meszar09}, this similarity can be used in studying root polytopes.

\begin{question}
Can such a geometric interpretation be given for the relations
\begin{gather*}
x_{i,j} x_{j,k}=x_{i,k} ( x_{i,j}+x_{j,k}+\beta ) +\alpha
\end{gather*}
in $\mathcal{X}/\mathcal{J}$ for general $\alpha$ and $\beta$, or at least for values other than $\beta=-1$ and $\alpha=0$?
\end{question}

The last question, entirely out of left field, asks for a connection to the notion of Rota--Baxter algebras (see, e.g., \cite{Guo09} for a~survey):

\begin{question}Is there anything to the superficial similarity \cite{Grinbe18} of the relation $x_{i,j}x_{j,k}=x_{i,k} ( x_{i,j}+x_{j,k}+\beta ) $ with the axiom of a Rota--Baxter algebra?
\end{question}

\subsection*{Acknowledgments}

The \texttt{SageMath} computer algebra system~\cite{SageMath} was of great service during the development of the results below. Conversations with Nick Early have led me to the ideas in Section~\ref{subsect.arnold}, and Victor Reiner has helped me concretize them. This paper has furthermore profited from enlightening comments by Ricky Liu, Karola M\'{e}sz\'{a}ros, Nicholas Proudfoot, Travis Scrimshaw, Richard Stanley, the anonymous referees and editor.

\addcontentsline{toc}{section}{References}
\LastPageEnding

\end{document}